\numberwithin{equation}{section}
\declaretheorem[name=Theorem,numberwithin=section,thmbox=M]{theo}
\declaretheorem[name=Proposition,numberwithin=section,thmbox=S]{prop}
\declaretheorem[name=Lemma, numberwithin=section]{lem}
\declaretheorem[name=Corollary,numberwithin=section,thmbox=S]{cor}
\theoremstyle{definition}
\theoremstyle{plain}
\renewenvironment{proof}[1][\proofname]{\par
  \pushQED{\qed}%
  \normalfont
  \topsep0pt \partopsep0pt \itemsep0pt \parsep0pt 
  \trivlist
  \item[\hskip\labelsep
    \scshape 
    #1\@addpunct{.}]\ignorespaces
}{%
  \popQED\endtrivlist\@endpefalse
}
\renewcommand{\and}{\mbox{ and }}
\newcommand{\integset}[1]{\ensuremath{ \llbracket #1 \rrbracket }}
\def\as{{ \mathrm{a.s.}  }}
\def\ed{\stackrel{{\mathcal{D}}}{=}}
\def\iid{\stackrel{{\mathrm{i.i.d}}}{\sim}}
\newcommand{\<}{\ensuremath{ \langle }}
\renewcommand{\>}{\ensuremath{ \rangle }}
\newcommand{\vect}[1]{\ensuremath{\boldsymbol{\mathbf{#1}}}}
\newcommand{\rdmvect}[1]{\ensuremath{\bm{#1}}}
\newcommand{\mat}[1]{\ensuremath{\boldsymbol{\mathbf{#1}}}}
\newcommand{\rdmmat}[1]{\ensuremath{\bm{#1}}}
\newcommand{\Tr}{\ensuremath{\mathrm{Tr}}}
\renewcommand{\det}{\ensuremath{\mathrm{det}}}
\newcommand{\norm}[1]{\ensuremath{ \|#1 \|}}
\newcommand{\matX}{\rdmmat{X}}
\newcommand{\matY}{\rdmmat{Y}}
\newcommand{\matXspk}{\rdmmat{\Tilde{X}}}
\newcommand{\matYspk}{\rdmmat{\Tilde{Y}}}
\newcommand{\matS}{\rdmmat{S}}
\newcommand{\matSx}{\rdmmat{S}_{x}}
\newcommand{\matSy}{\rdmmat{S}_{y}}
\newcommand{\matSspk}{\rdmmat{\Tilde{S}}}
\newcommand{\svdist}{\mu}
\newcommand{\girko}{\eta}
\newcommand{\matrixset}{\mathbb{M}}
\newcommand{\nbymmatrix}{\matrixset_{n,m}}
\newcommand{\samatrix}{\matrixset^{H}}
\newcommand{\unitarymatrixset}{\mathbb{U}}
\newcommand{\diagmatrixset}{\mathbb{D}}
  \newcommand{\usx}{\vect{u}^\star_x}
  \newcommand{\usxk}{\vect{u}^{\star}_{x,k} }
   \newcommand{\vsx}{\vect{v}^{\star}_{x} }
  \newcommand{\vsxk}{\vect{v}^{\star}_{x,k} }
\newcommand{\usy}{\vect{u}^{\star}_{y} }
\newcommand{\usyk}{\vect{u}^{\star}_{y,k} }
\newcommand{\vsy}{\vect{v}^{\star}_{y} }
\newcommand{\vsyk}{\vect{v}^{\star}_{y,k} }
  \newcommand{\lyk}{\lambda_{y,k}} 
  \newcommand{\lxk}{\lambda_{x,k}}
  \newcommand{\detit}{{\it{\Delta}}}
  \newcommand{ \matQk}{ {\mat{Q}_k^{\star} }}
  \newcommand{ \matG}{ {\rdmmat{G}}}
  \newcommand{ \matGspk}{ {\rdmmat{\Tilde{G}} } }
      \newcommand{ \matKk}{\rdmmat{H}_{k} }
  \newcommand{ \matKspkk}{ {\rdmmat{\Tilde{H}}_{k} } }
\title{Spectral Thresholds in Correlated Spiked Models\\
and Fundamental Limits of Partial Least Squares}
\author[1,*]{Pierre Mergny}
\author[2]{Lenka Zdeborov\'a}
\affil[1]{Information, Learning and Physics Laboratory (IdePHICS), EPFL, 1015 Lausanne, Switzerland}
\affil[2]{Statistical Physics of Computation Laboratory (SPOC), EPFL, 1015 Lausanne, Switzerland}
\affil[*]{corresponding author : \texttt{mergny.pierre@gmail.com}}
\date{} 
\begin{document}


\maketitle 

\begin{abstract}
We provide a rigorous random matrix theory analysis of spiked cross-covariance models where the signals across two high-dimensional data channels are partially aligned. These models are motivated by multi-modal learning and form the standard generative setting underlying Partial Least Squares (PLS), a widely used yet theoretically underdeveloped method. We show that the leading singular values of the sample cross-covariance matrix undergo a Baik–Ben Arous–Péché (BBP)-type phase transition, and we characterize the precise thresholds for the emergence of informative components. Our results yield the first sharp asymptotic description of the signal recovery capabilities of PLS in this setting, revealing a fundamental performance gap between PLS and the Bayes-optimal estimator. In particular, we identify the SNR and correlation regimes where PLS fails to recover any signal, despite detectability being possible in principle. These findings clarify the theoretical limits of PLS and provide guidance for the design of reliable multi-modal inference methods in high dimensions.
\end{abstract}

\section{INTRODUCTION}
\label{sec:intro}

The challenge of recovering a low-dimensional structure hidden in a high-dimensional noisy output is widespread in statistics, probability, and machine learning. Spiked random matrix models \cite{arous2005phase,johnstone2009consistency,lelarge2017fundamental,zou2006sparse}  have attracted significant attention as a simple  yet rich framework for studying this class of problems, especially within the toolbox of random matrix theory (RMT) \cite{AGZ,tao2012topics,potters2020first}, which provides asymptotic characterizations of spectral properties in high dimensions.  On the other hand, in more complex data scenarios, one often has access to multiple related outputs. Multi-modal learning \cite{ngiam2011multimodal,ramachandram2017deep}, a central paradigm in modern data analysis, seeks to leverage the joint information contained in such datasets to improve inference or prediction. This includes, for instance, settings where signals are observed across different modalities or sensors.

Popular classical approaches such as Canonical Correlation Analysis (CCA) \cite{thompson2000canonical,guo2019canonical} and Partial Least Squares (PLS) \cite{wold1975path,wold1983systems,wold2001pls,wegelin2000survey,pirouz2006overview} rely on spectral methods to uncover such cross-dependencies and have been widely applied across various scientific and engineering domains. While CCA has been extensively analyzed in the literature \cite{yang2022limiting,yang2022sample,guo2019canonical,bykhovskaya2023high,ma2023sample,bao2019canonical}, notably through the lens of RMT, methods such as PLS, which operate directly on the (empirical) cross-covariance matrix, despite their widespread use, remain less well understood from a theoretical point of view.  
\vspace{0.5em}
To address this issue, we focus on a setting involving two \emph{correlated spiked matrix models} (or \emph{"channels"}), defined as follows:
\begin{align}
 \label{eq:initial_model_X}
&  \matXspk = \matX + \sum_{k=1}^r \sqrt{\lxk} \cdot \usxk \, (\vsxk)^T \in \mathbb{R}^{n  \times d_x} \, ,  \\ 
\label{eq:initial_model_Y}   
& 
     \matYspk = \matY + \sum_{k=1}^r \sqrt{\lyk} \cdot \usyk \, (\vsyk)^T \in \mathbb{R}^{n \times d_y} \, .
\end{align}
The precise description and assumptions of this model are provided in more detail in the following paragraph. For now, one may consider the matrices $\matX$ and $\matY$ 
as \emph{noise matrices}. The  values $\lambda_{x,k}, \lambda_{y,k} \geq 0$ of the low-rank matrices serve as signal-to-noise ratios (SNR) for the components $\{ \usxk, \vsxk, \usyk, \vsyk \}_{k=1}^r$ that one aims to infer. The term \emph{correlated} refers to the assumption that the unit-norm signals in the shared dimension of the two sources exhibit partial alignment in the high-dimensional regime. Specifically, we consider that $
    \langle \usxk, \usyk \rangle \approx \rho_k \in (-1,1) $ for some fixed positive constants $\rho_k$. 
\vspace{0.5em}
Our objective is to provide a quantitative analysis of the Partial Least Squares (PLS) methods and derive its performance analytically to ease and inform the comparison with other estimators.  The PLS methods are based on estimating the subspace spanned by the signals based on the singular vectors of the \textbf{sample cross-covariance matrices}:
\begin{align}
\label{eq:def_matSspk}
    \matSspk &:= \matXspk^T \matYspk \in  \mathbb{R}^{d_x \times d_y} \, ,
\end{align}
and thus a high-dimensional setting where 
\begin{align*}
n, d_x, d_y  \to \infty \; \mbox{such that} \; \frac{d_x}{n} \to \alpha_x, \frac{d_y}{n}\to \alpha_y  \, , 
\end{align*}
for some positive constants $\alpha_x, \alpha_y$ and without loss of generality, we set $\alpha_x \geq \alpha_y$.
\vspace{0.5em}
This model has recently been considered as a natural toy model for describing \emph{multi-modal learning} in high-dimensional settings, see \cite{abdelaleem2023simultaneous,keup2024optimal}.  The key intuition is that stronger alignment between the correlated signal vectors may facilitate the recovery of the other components. Authors of \cite{keup2024optimal}
 evaluated the detectability threshold of the Bayes-optimal estimator and compared it empirically with the performance of the PLS and CCA. While for the unimodal spiked matrix model, the detectability threshold of the Bayes-optimal estimator coincides with the BBP threshold for the natural spectral methods, the authors of \cite{keup2024optimal} pointed out based on numerical experiments that for the above correlated spiked matrix model the threshold of both the PLS and CCA are suboptimal. This observation is interesting in particular in the view of the contrast with the unimodal case.

\paragraph{Main results --} In this work, we provide the high-dimensional limits of the spectral method based on the sample-cross covariance matrix given by Eq.~\eqref{eq:def_matSspk}. Specifically, 
\begin{itemize}
    \item We show that its leading singular values undergo a BBP-like phase transition depending on the value of the SNR, correlation and aspect ratios.  
    \item We obtained a complete characterization of the associated overlap with the hidden signal, and their phase transition, further generalizing the BBP results to this cross-product setting. 
    \item We apply this result to obtain the fundamental limits of Partial Least Square (PLS) methods. 
    \item We discuss the comparison between the PLS, CCA and Bayes-optimal thresholds unveiling the somewhat surprising sub-optimality of these spectral approaches even on a model as simple as the one considered here.
\end{itemize}

\paragraph{Related works --} Many variants of spiked matrix models have been investigated in the high-dimensional regime; see \cite{peche2014deformed,arous2005phase,paul2007asymptotics,perry2018optimality,benaych2011eigenvalues,guionnet2023spectral} for general references, and in particular \cite{loubaton2011almost,capitaine2018limiting,benaych2012singular} for the case of spectral analysis in a \emph{single-channel} setting (also known as the \emph{information-plus-noise spiked model}) —namely, the study of the asymptotic behavior of the singular values and vectors of $\matXspk$ (or equivalently, of $\matYspk$). The properties of the spectrum of sample cross-covariance matrices \emph{without any low-rank
perturbation} are also well understood, see \cite{burda2010eigenvalues,akemann2013products}. To the best knowledge of the authors, this work is the first one to characterize analytically the spectral properties of these cross-covariance matrices \emph{with the inclusion of such spikes}, answering in particular the question left open in \cite{keup2024optimal} where such models have been introduced as a multi-modal toy model and studied from a Bayes-Optimal (BO) point of view. As a direct application of our work, we obtain the fundamental limit of the performance of Partial Least Squares (PLS) methods. Despite the lack of previous theoretical guarantees (see \cite{keup2024optimal} and \cite{abdelaleem2023simultaneous} for an exploratory empirical study), PLS methods are widely used across a variety of fields \cite{hulland1999use,krishnan2011partial}. Closely related to PLS, CCA (sometimes refereed to as PLS "mode B") which operates on the MANOVA matrix $(\matXspk^T \matXspk)^{-1/2} \matXspk ^T \matYspk (\matYspk^T \matYspk)^{-1/2}$ rather than the cross covariance matrix $\matSspk$ has been recently studied from a theoretical point of view in \cite{guo2019canonical,yang2022limiting,yang2022sample,bykhovskaya2023high} and our work compares the phase diagram of PLS methods with both CCA and BO methods. Similarly, \cite{ma2023community,yang2024fundamental,duranthon2023optimal,tabanelli2025computationalthresholdsmultimodallearning,pacco2023overlaps} investigated the information theoretical limit for the inference of multiple spiked models with correlated signals, albeit in a different setting.  
Finally, we mention the related the  work \cite{benaych2023optimal,attal2025eigenvector} which analyzes cross-covariance models in a regime where the number of spikes grows linearly with dimension, leading to fundamentally different spectral behavior. During the preparation of this manuscript, we became aware of a related work \cite{swain2025bettertogethercrossjoint} treating a variant of our problem without the spike, which is complementary to our analysis. 

\paragraph{Notations --}  In the following, generic vectors are written in bold lowercase (e.g. $\vect{v}$) while matrices are written in bold uppercase (e.g. $\mat{M}$). To further emphasize on the random nature of vectors (or matrices), we will use italics when needed (e.g. $\rdmvect{v}$ and $\rdmmat{M}$ respectively). In a similar way, we use the \emph{tilde}-notations (e.g. $\rdmvect{\Tilde{v}},\rdmmat{\Tilde{M}}$) to highlight the dependency in the low-rank signal components. For $k \in \mathbb{N}$, $\integset{k}= \{1,\dots,k\}$. $\mathbb{C}_{\pm} = \{ z \in \mathbb{C} | \pm \mathfrak{Im}(z) > 0  \}$ correspond to the upper and lower half complex plane. For $\mat{M} \in \mathbb{R}^{n \times m}$ with $n \leq m$, we denote by $\sigma_1(\mat{M}) \geq \dots \geq \sigma_n(\mat{M}) \geq 0$ its singular values and  $\svdist_{\mat{M}} := \frac{1}{n} \sum_{i=1}^n \delta_{\sigma_i(\mat{M})^2}$ the associated empirical \emph{squared} singular value distribution. We say that a sequence $(X_n)_n$ of random variables converges exponentially fast to a value $x$ if for all $\epsilon>0$, there exist $C,c>0$ such that $\mathbb{P}(|X_n - x|>\epsilon) \leq C \mathrm{e}^{-cn}$. $\mathcal{P}_c(\mathbb{R})$ denotes the set of compactly supported measures on $\mathbb{R}$. For a sequence $\mu_n \in \mathcal{P}_c(\mathbb{R})$, we denote $\mu_n \to \mu$ the (almost-sure) weak convergence. We denote by $\eta( \mat{A}) :=   \begin{pmatrix}
    0 & \mat{A} \\
    \mat{A}^T & 0
\end{pmatrix}$ the hermitian embedding of $\mat{A}$.

\section{ASYMPTOTICS OF THE TOP SINGULAR VALUES AND OVERLAPS}

\subsection{Assumptions}
\label{sec:assumptions}
In what follows, we provide a more detailed description of the assumptions underlying our model for the cross-covariance matrix of Eq.~\eqref{eq:def_matSspk}. 
\\
\\
\textbf{Assumption (A1).} \emph{The  matrices $\matX= (X_{ij})_{i,j}$ and $\matY = (Y_{ij})_{i,j}$ are independent with entries given by $ \sqrt{d_x} \,  X_{ij} \iid \mathsf{N}(0,1)$ and  $ \sqrt{d_y}  \, Y_{ij} \iid \mathsf{N}(0,1)$.}
\\
\\
Although our analysis is carried out under this Gaussian assumption, universality results in RMT suggest that our conclusions should extend beyond this framework, typically requiring only  the entries to satisfy a log-Sobolev inequality, see for example \cite{baik2006eigenvalues} for the within channel covariance matrix. We leave a rigorous investigation of this extension for future work. 
\\
\\
\textbf{Assumption (A2).} \emph{For $z \in \{x,y\}$ and any $k \in \integset{r}$, as $n \to \infty$ the low-rank signal components satisfy $\norm{\vect{u}_{z,k}^\star} \xrightarrow[n\to \infty]{\as} 1$, $\norm{\vect{v}_{z,k}^\star} \xrightarrow[n\to \infty]{\as} 1$ and $ \langle \usxk , \usyk \rangle \xrightarrow[n\to \infty]{\as} \rho_k$ exponentially fast and any other inner product converges to zero.}
\\
\\
Assumption (A2) covers the case where the signal components are drawn independently from $\sqrt{d_x} \, \vsxk \sim \mathsf{P}^{\otimes d_x}_x$, $\sqrt{d_y} \, \vsyk \sim \mathsf{P}^{\otimes d_y}_y$ and $(\usxk, \usyk) \sim \mathsf{Q}^{\otimes n}_{(\rho_k)}$, where $\mathsf{P}_x$, $\mathsf{P}_y$ (resp. $\mathsf{Q}_{(\rho_k)}$) are probability measures on $\mathbb{R}$ (resp. on $\mathbb{R}^2$) with mean zero, variance one (resp. covariance {\tiny{$\begin{pmatrix}
    1 & \rho_k \\
    \rho_k &  1
\end{pmatrix}$}} ) satisfying standard log-Sobolev inequalities, as well as the cases where the signal components are deterministic vectors.

\subsection{Preliminary Definitions}
For $\mu \in \mathcal{P}_c(\mathbb{R}_+)$ and $z \in \mathbb{C} \setminus \mathrm{Supp}(\mu)$, we introduce the \emph{$T$-(moment generating) transform} as: 
\begin{align} 
\label{eq:def_Ttransform}
    t_\mu(z) := \int \frac{\lambda}{z-\lambda} \mathrm{d}\mu(\lambda) \, , 
\end{align}
which uniquely characterizes $\mu \in  \mathcal{P}_c(\mathbb{R}_+)$, see \cite{potters2020first}.
\vspace{0.5em}
Next, for $z \in \mathbb{C}_-$ we introduce the cubic polynomial $P \in \mathbb{R}_3[X]$ given by 
\begin{align}
\label{eq:def_Pol_P}
    P_{(\alpha_x, \alpha_y)}(X,z)
    :=&
    1 + ( 1+ \alpha_x + \alpha_y -z)  X + (\alpha_x + \alpha_y + \alpha_x \alpha_y)  X^2 + \alpha_x \alpha_y  X^3 \, .  
\end{align}
We will also consider the following two other cubic polynomials $Q,R \in \mathbb{R}_3[X]$ as
\begin{align}
\label{eq:def_Pol_Q}
    Q_{(\alpha_x,\alpha_y)}(X) 
    &:=
    1 -  (\alpha_x \alpha_y + \alpha_x + \alpha_y )   X^2 - 2 \alpha_x \alpha_y  X^3 \, , 
\end{align}
and
\begin{align}
\label{eq:def_Pol_R}
    R_{(\lambda_x,\lambda_y,\rho)}(X) :=& 1 + (1- \rho^2 \lambda_x \lambda_y - \lambda_x - \lambda_y)  X  + ( \lambda_x \lambda_y - \lambda_x - \lambda_y )  X^2 + \lambda_x \lambda_y  X^3
    \, ;
\end{align}
and will use the following property. 
\begin{lem}[Roots of the cubic polynomials]
\label{lem:roots_polynomials}
The polynomial $Q_{(\alpha_x, \alpha_y)}$ has exactly one positive root which we denote by $\uptau^+ \equiv \uptau^+( \alpha_x, \alpha_y )$. Similarly, the polynomial $ R_{(\lambda_x,\lambda_y,\rho)}$ has exactly two (counted with multiplicity) positive roots, which we denote by $\mathrm{r}^+ \equiv \mathrm{r}^+(\lambda_x,\lambda_y,\rho) \geq \mathrm{r}^- \equiv \mathrm{r}^-(\lambda_x,\lambda_y,\rho) >0$.   Furthermore, $\mathrm{r}^-$ (resp. $\mathrm{r}^+$) is a decreasing (respectively increasing) function of $\rho^2$. 
\end{lem}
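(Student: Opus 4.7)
The statement is essentially elementary polynomial analysis; the main observation I would rely on is a clean factorization of $R_{(\lambda_x,\lambda_y,\rho)}$ at $\rho=0$ that turns the study of a general cubic into a controlled one-parameter perturbation argument.

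For the polynomial $Q_{(\alpha_x,\alpha_y)}$ the argument is direct. I would restrict to $X\geq 0$ and differentiate to obtain
\begin{equation*}
Q'_{(\alpha_x,\alpha_y)}(X) \;=\; -2X\bigl[(\alpha_x+\alpha_y+\alpha_x\alpha_y)+3\alpha_x\alpha_y X\bigr],
\end{equation*}
which is strictly negative on $(0,\infty)$. Since $Q_{(\alpha_x,\alpha_y)}(0)=1$ and $\lim_{X\to+\infty}Q_{(\alpha_x,\alpha_y)}(X)=-\infty$, monotonicity on the positive half-line yields a unique positive root $\uptau^+$.

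For $R_{(\lambda_x,\lambda_y,\rho)}$ the key identity (verified by direct expansion) is
\begin{equation*}
R_{(\lambda_x,\lambda_y,0)}(X) \;=\; (1+X)(1-\lambda_x X)(1-\lambda_y X),
\end{equation*}
and since only the linear coefficient of $R$ depends on $\rho$, for every $\rho$ one has
\begin{equation*}
R_{(\lambda_x,\lambda_y,\rho)}(X) \;=\; (1+X)(1-\lambda_x X)(1-\lambda_y X) \;-\; \rho^2\lambda_x\lambda_y\, X.
\end{equation*}
Assuming without loss of generality $\lambda_x\leq\lambda_y$, I would then evaluate $R$ at the natural probes: $R(0)=1>0$, $R(1/\lambda_y)=-\rho^2\lambda_x\leq 0$, $R(1/\lambda_x)=-\rho^2\lambda_y\leq 0$, $R(-1)=\rho^2\lambda_x\lambda_y\geq 0$, together with $R(X)\to\pm\infty$ as $X\to\pm\infty$. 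The intermediate value theorem then produces three real roots, one in $(-\infty,-1]$, one $\mathrm{r}^-$ in $(0,1/\lambda_y]$, and one $\mathrm{r}^+$ in $[1/\lambda_x,+\infty)$; since $R$ is cubic, these are all of them. The confluent case $\lambda_x=\lambda_y,\,\rho=0$ is covered by the ``counted with multiplicity'' clause, and the product-of-roots relation $-1/(\lambda_x\lambda_y)<0$ independently rules out any alternative distribution of signs.

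For the monotonicity in $\rho^2$, I would set $t:=\rho^2$ and apply the implicit function theorem, using $\partial_t R(X,t) = -\lambda_x\lambda_y X$, which is strictly negative at any positive root. This gives
\begin{equation*}
\frac{d\mathrm{r}^\pm}{dt} \;=\; \frac{\lambda_x\lambda_y\,\mathrm{r}^\pm}{\partial_X R(\mathrm{r}^\pm,t)}.
\end{equation*}
For $t>0$ the two positive roots are simple and separated, and the graph analysis from the previous paragraph shows $R$ changes sign from $+$ to $-$ at $\mathrm{r}^-$ and from $-$ to $+$ at $\mathrm{r}^+$, so $\partial_X R(\mathrm{r}^-,t)<0$ and $\partial_X R(\mathrm{r}^+,t)>0$. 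Hence $\mathrm{r}^-$ is strictly decreasing and $\mathrm{r}^+$ strictly increasing in $\rho^2$, the boundary $\rho^2=0$ following by continuity. The only delicate point is the confluence at $\lambda_x=\lambda_y,\,\rho=0$, where $\mathrm{r}^-=\mathrm{r}^+$ is a double root and the implicit function theorem must be applied with some care (e.g.\ via a Puiseux expansion or by monotonicity of $\mathrm{r}^\pm$ on each open stratum plus continuity); I expect this to be the main, mild, obstacle.
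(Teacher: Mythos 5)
Your proof is correct and, for the polynomial $R$, takes a genuinely different and cleaner route than the paper. For $Q$ your argument (monotone on the positive half-line, sign change from $Q(0)=1$ to $-\infty$) is essentially the paper's. For $R$, the paper establishes the existence of two positive roots by (a) explicitly computing the discriminant of $R$ — a large, unenlightening expression — and checking it is positive to get three real roots, and (b) solving $R'=0$ to locate the unique positive critical point $x_+$ and deducing the shape of $R$ on $(0,\infty)$. Your factorization $R_{(\lambda_x,\lambda_y,\rho)}(X)=(1+X)(1-\lambda_x X)(1-\lambda_y X)-\rho^2\lambda_x\lambda_y X$, which I have checked, replaces that entire computation: the probe values $R(0)=1$, $R(1/\lambda_y)=-\rho^2\lambda_x$, $R(1/\lambda_x)=-\rho^2\lambda_y$, $R(-1)=\rho^2\lambda_x\lambda_y$ and the behavior at $\pm\infty$ give the three real roots by the intermediate value theorem, with explicit bracketing intervals, and the Vieta sign check on the product of roots guarantees the count of positive roots. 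This is shorter, more transparent, and also yields the useful localization $\mathrm{r}^-\in(0,1/\max(\lambda_x,\lambda_y)]$ and $\mathrm{r}^+\in[1/\min(\lambda_x,\lambda_y),\infty)$ for free. The monotonicity step is the same implicit-differentiation argument as the paper (note the paper's $\partial_{\rho^2}R=-2\lambda_x\lambda_y\,\mathrm{r}_\pm$ has a spurious factor of $2$; yours, $-\lambda_x\lambda_y\,\mathrm{r}_\pm$, is the correct derivative, though only the sign matters); you justify $\mathrm{sign}\,\partial_X R(\mathrm{r}^\pm)=\pm$ directly from your sign table whereas the paper appeals to the unimodality of $R$ on $(0,\infty)$ established from $R'$ — both work. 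Your caveat about the confluent double root at $\lambda_x=\lambda_y,\ \rho=0$ is fair but not a gap: for $\rho^2>0$ the two positive roots are simple and strictly separated by your bracketing, so the monotonicity holds on the open parameter region, and the boundary statement follows by continuity of the roots in $\rho^2$.
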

\begin{proof}
    See Appendix \ref{sec:app:cubicroots}. 
\end{proof}
\vspace{0.5em}
For cubic polynomials, the roots can be expressed explicitly using Cardano's formula. However, the resulting expressions are quite cumbersome, and the roots are more conveniently computed numerically using standard root-finding algorithms. In the following, to ease notations we write simply $\mathrm{r}_k^\pm \equiv \mathrm{r}^\pm(\lambda_{x,k},\lambda_{y,k},\rho_k)$.

\subsection{Bulk Distribution} 
We  recall the result regarding the behavior of the \emph{bulk} of the spectrum. To this end, consider the constant  $\varsigma_+$ defined as
\begin{align}
\label{eq:def_edge}
      \varsigma_+ \equiv \varsigma_+(\alpha_x, \alpha_y)   :=  \sqrt{\frac{\big(1+\uptau^+\big)\big(1+\alpha_x \uptau^+\big) \big(1+\alpha_y \uptau^+\big)}{\uptau^+}} \, ,
\end{align}
which corresponds to the solution (in $z$) of the equation $P_{(\alpha_x,\alpha_y)}( \uptau^+/\alpha_x,z^2) =0$. For notational convenience, we present the proposition under the assumption $\alpha_x > 1$, and refer the reader to Appendix~\ref{sec:app:bulk} for the complementary case.

\begin{prop}[Bulk Distribution, \cite{swain2025distribution,burda2010eigenvalues}]
\label{prop:bulk}
Under Assumptions (A1)-(A2) with $\matSspk$ given by Eq.\eqref{eq:def_matSspk},
\begin{itemize}
    \item[(i)]  we have $\mu_{\matSspk} \to \mu_{(\alpha_x,\alpha_y)}$ whose $T$-transform $t(z) \equiv t_{\mu_{(\alpha_x,\alpha_y)}}(z)$ defined by Eq.~\eqref{eq:def_Ttransform} is given for any $z \in \mathbb{C}_-$ as the unique solution in $\mathbb{C}_+$ of
\begin{align}
\label{eq:fixedpoint_transform}
    P_{(\alpha_x,\alpha_y)} \bigg( \frac{t(z)}{\alpha_x},z \bigg) = 0 \, . 
\end{align}
    \item[(ii)] Furthermore, in the absence of planted signals $(\lxk = \lyk = 0$ for all $k \in \llbracket r \rrbracket$), the top singular value converges to the rightmost edge of the distribution $\mu_{(\alpha_x,\alpha_y)}$, given by the constant $\varsigma_+$ of Eq.~\eqref{eq:def_edge} and $\lim_{\epsilon \searrow0} t( \varsigma_+^2 + \epsilon) = \uptau^+$ defined in Lemma~\ref{lem:roots_polynomials}. 
\end{itemize} 
\end{prop}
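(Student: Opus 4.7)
My plan is to reduce to the spike-free case, invoke the known description of the bulk for products of independent Gaussian matrices, and then identify the edge via a discriminant analysis of the characteristic cubic $P_{(\alpha_x,\alpha_y)}$.

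For part~(i), the first step is to observe that the spike contribution in Eqs.~(1.1)--(1.2) perturbs $\matSspk$ from the pure noise matrix $\matS:=\matX^T\matY$ by a term of rank at most $3r$: expanding $\matXspk^T\matYspk$ produces $\matS$ plus three cross terms each of rank at most $r$. Weyl's interlacing inequalities for singular values then imply that the Kolmogorov distance between the empirical CDFs of squared singular values of $\matSspk$ and $\matS$ is $O(r/d_y)\to0$, so $\mu_{\matSspk}$ and $\mu_\matS$ have the same weak limit and it suffices to study $\matS$. For $\matS=\matX^T\matY$ the non-zero squared singular values coincide with the non-zero eigenvalues of $(\matX\matX^T)(\matY\matY^T)$, a product of two independent $n\times n$ Wishart matrices whose individual spectra converge almost surely to Marchenko--Pastur laws with parameters $\alpha_x$ and $\alpha_y$. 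Asymptotic freeness (the two factors are orthogonally invariant and independent) then identifies the limit as the free multiplicative convolution $\pi_{\alpha_x}\boxtimes\pi_{\alpha_y}$. The cubic fixed-point equation for the $T$-transform can be derived either from multiplicativity of Voiculescu's $S$-transform (and translation between $S$ and $T$ via $S(w)=(1+w)/(w\,T^{-1}(w))$), or from a matrix-Dyson / Schur complement argument on the Hermitian embedding $\eta(\matS)$: coupled scalar resolvent equations for $\matX\matX^T$ and $\matY\matY^T$ with cross-coupling supplied by $\matS$ yield exactly $P_{(\alpha_x,\alpha_y)}(\,\cdot\,,z)=0$ after elimination. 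Uniqueness of the $\mathbb C_+$-valued solution on $\mathbb C_-$ is automatic, since $t_\mu$ maps $\mathbb C_-$ into $\mathbb C_+$ for $\mu$ supported on $\mathbb R_+$ and only one root of a cubic can satisfy this.

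For part~(ii), the spike-free matrix $\matS$ has no outliers: Gaussian concentration of the $O(n^{-1/2})$-Lipschitz function $\mat{M}\mapsto\sigma_1(\mat{M})$ combined with the weak bulk convergence and an edge-rigidity argument available in the cited literature gives $\sigma_1(\matS)^2\to\varsigma_+^2$ almost surely, where $\varsigma_+^2$ is the right edge of $\mu_{(\alpha_x,\alpha_y)}$. To identify $\varsigma_+$, one observes that the support of $\mu_{(\alpha_x,\alpha_y)}$ terminates where the implicit equation $P_{(\alpha_x,\alpha_y)}(\tau,z)=0$ ceases to define $\tau$ analytically in $z$, i.e.\ where $P(\cdot,z_+)$ admits a double root $\tau_+$. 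Imposing simultaneously $P(\tau,z)=0$ and $\partial_\tau P(\tau,z)=0$ and eliminating $z$ between the two equations cancels the linear-in-$z$ terms and produces exactly $Q_{(\alpha_x,\alpha_y)}(\tau)=0$, whose unique positive root is $\uptau^+$ by Lemma~2.1. Substituting $\tau=\uptau^+$ back into $P(\tau,z)=0$ and using the factorisation $1+(1+\alpha_x+\alpha_y)\tau+(\alpha_x+\alpha_y+\alpha_x\alpha_y)\tau^2+\alpha_x\alpha_y\tau^3=(1+\tau)(1+\alpha_x\tau)(1+\alpha_y\tau)$ yields $\varsigma_+^2=(1+\uptau^+)(1+\alpha_x\uptau^+)(1+\alpha_y\uptau^+)/\uptau^+$, matching Eq.~(2.5); the limit of $t$ at $\varsigma_+^2$ from above is then read off by monotone continuity of $t$ on the real axis past the support.

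The principal obstacle I anticipate is the clean algebraic derivation of the cubic fixed-point equation with the correct normalisation conventions: although free-probability multiplicativity gives such a relation in principle, carefully translating between $S$- and $T$-transforms for two Marchenko--Pastur laws of different shape parameters and extracting the symmetric cubic form of $P_{(\alpha_x,\alpha_y)}$ requires care. The no-outlier bound and the edge identification from the discriminant are essentially routine once the bulk equation and the cited literature on products of Wishart matrices are in hand.
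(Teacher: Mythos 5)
Your proposal is correct and follows essentially the same route as the paper: reduce to the spike-free matrix $\matS=\matX^T\matY$, identify the limiting squared-singular-value law as the free multiplicative convolution $\pi_{\alpha_x}\boxtimes\pi_{\alpha_y}$ of two Marchenko--Pastur laws via $S$-transform multiplicativity, and rescale $t=\alpha_x t_{\boxtimes}$ to obtain the cubic fixed point $P_{(\alpha_x,\alpha_y)}(t/\alpha_x,z)=0$. You additionally spell out two steps the paper's appendix leaves implicit --- the rank-$O(r)$ perturbation bound justifying the reduction to $\matS$, and the double-root/elimination computation showing that $\partial_\tau P=P=0$ yields $Q_{(\alpha_x,\alpha_y)}(\tau)=0$ and hence $\varsigma_+^2=(1+\uptau^+)(1+\alpha_x\uptau^+)(1+\alpha_y\uptau^+)/\uptau^+$ --- both of which check out.
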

\begin{proof}
 This follows from free probability results, see \cite{mingo2017free} for an introduction to the topic. For completeness, the reader may find the proof in Appendix \ref{sec:app:bulk}.
\end{proof}
\vspace{0.5em}
Fig.~\ref{fig:sv_distribution} provides an illustration comparing the theoretical bulk of the spectrum with empirical histograms of the eigenvalues for large but finite dimensions. 

\begin{figure*}[t]
    \centering
    \begin{subfigure}[t]{0.49\textwidth}
        \centering
\includegraphics[width=\textwidth]{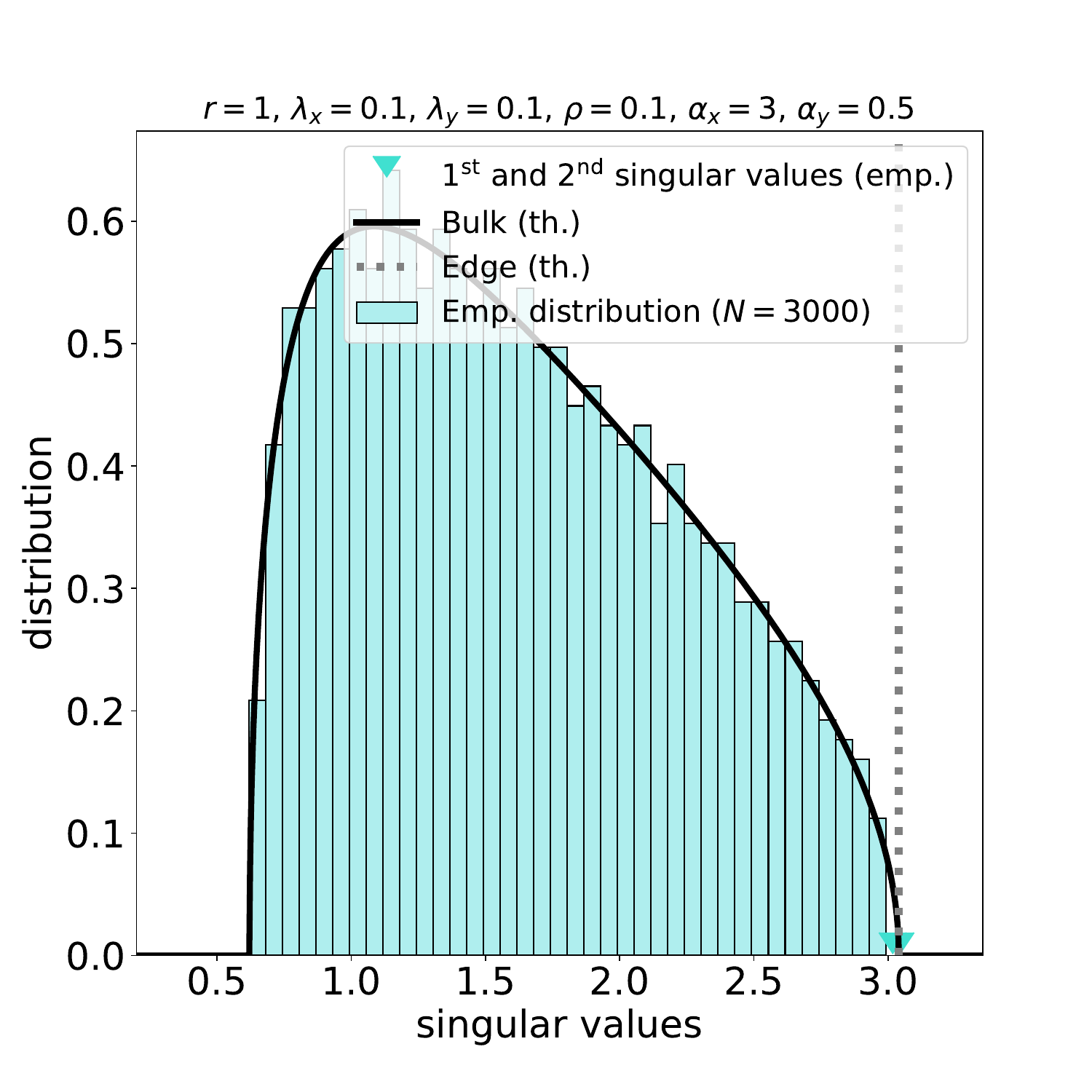}
        \caption{}
        \label{fig:subfig_0spk}
    \end{subfigure}
    \hfill
    \begin{subfigure}[t]{0.49\textwidth}
        \centering
        \includegraphics[width=\textwidth]{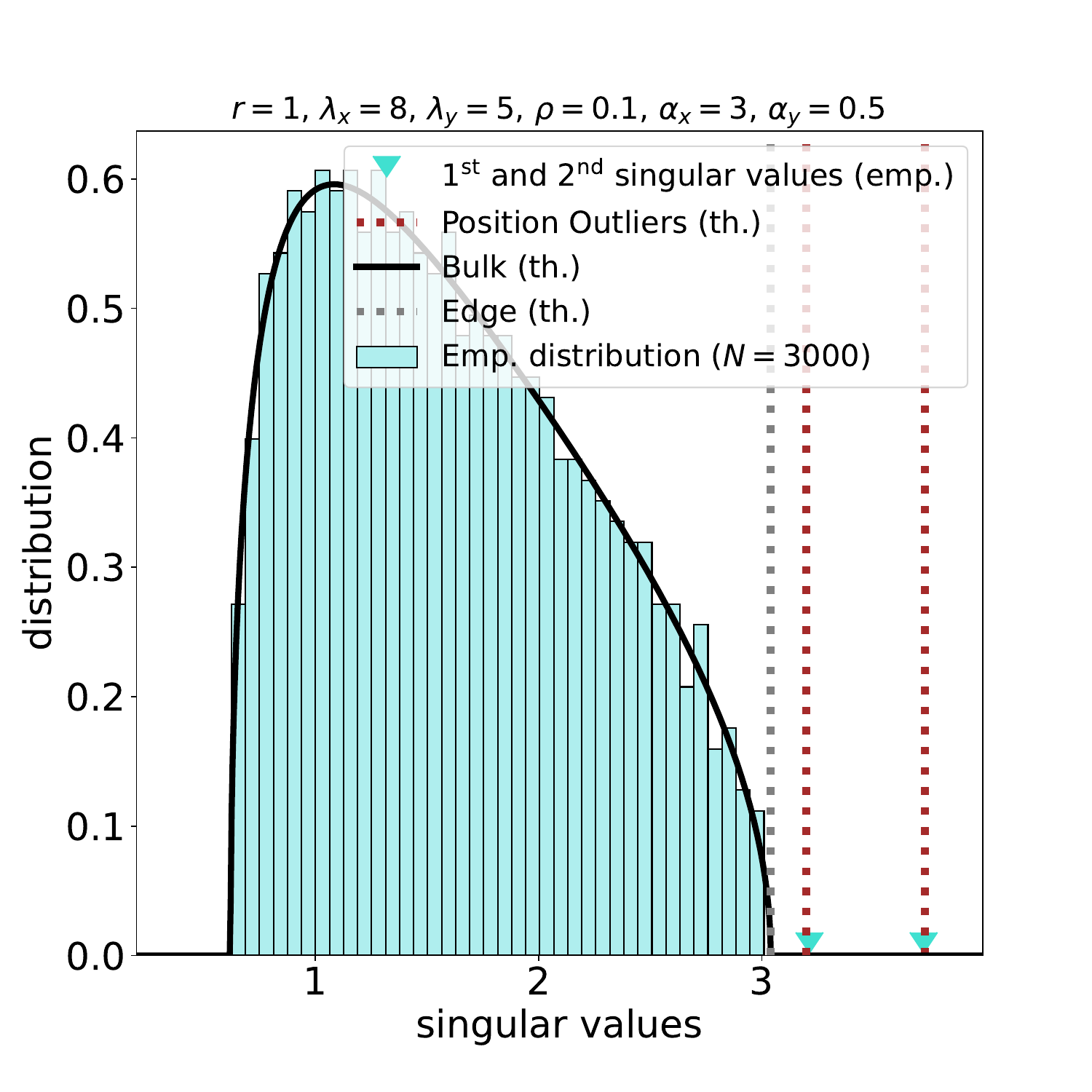}
        \caption{}
        \label{fig:subfig_2spk}
    \end{subfigure}
    \caption{Singular values of the spiked cross-covariance \eqref{eq:def_matSspk} with one spike ($r=1$) in each channel,  for high (b) and low (a) values of the signal components  ($\lambda_x, \lambda_y$) while the other parameters ($\alpha_x,\alpha_y,\rho$) are the same. The figures indicate the presence of outliers outside the bulk for high values while for low values, the top two singular values stick to the edge of the distribution defined in Prop.~\ref{prop:bulk}. The theoretical positions of the outliers for the right panel follow from our main result Thm.~\ref{thm:singularvalue}.   }
    \label{fig:sv_distribution}
\end{figure*}

\subsection{Asymptotics of the Top Singular Values}

Our first result characterizes how Part-(ii) of this proposition is modified by the presence of low-rank signals leading to a BBP-like phase transition phenomenon. To this end, we introduce the non-increasing function $ b: \mathbb{R}^+ \ni r \mapsto b(r) \in \mathbb{R}^+$ defined by
\begin{align}
\label{eq:def_bbpfunction}
b(r) 
:=     
\begin{cases}
\sqrt{\frac{\big(1+r \big)\big(1+\alpha_x r \big) \big(1+\alpha_y r \big)}{r}} & \mbox{if } 1/r \geq 1/\uptau^+ \, ,\\
    \\
        \varsigma_+ & \mbox{otherwise} \, . 
    \end{cases}
\end{align}
which appears in the following result.

\begin{theo}[Phase Transition for the Singular Values]
\label{thm:singularvalue}
Under Assumptions (A1)-(A2), the top $2 r$ singular values of the matrix $\matSspk$ given by Eq.~\eqref{eq:def_matSspk} are given asymptotically (up to re-ordering) by 
\begin{align}
\{ \mathrm{\sigma}_1( \matSspk), \dots ,   \mathrm{\sigma}_{2r}( \matSspk)\}\xrightarrow[n \to \infty]{\as}
\{  b( \mathrm{r}_k^+)           \;  , \;  b( \mathrm{r}_k^-)  \}_{k=1}^r   \, , 
\end{align}
where the $\mathrm{r}_k^\pm \equiv \mathrm{r}^\pm(\lambda_{x,k},\lambda_{y,k},\rho_k)$ are given in Lemma~\ref{lem:roots_polynomials}. 
\end{theo}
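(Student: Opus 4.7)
The plan is to follow the standard BBP recipe for finite-rank perturbations, adapted to the cross-covariance setting: (i) reduce the outlier equation to a fixed-size determinantal identity via the Hermitian embedding and the matrix determinant lemma; (ii) evaluate the almost-sure limits of the resulting $O(r)\times O(r)$ matrix entries using concentration of quadratic forms together with Prop.~\ref{prop:bulk}; and (iii) show that the limiting master equation factorizes over spike indices, each factor collapsing to $R_{(\lxk,\lyk,\rho_k)}=0$.

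\emph{Step 1 (reduction to a small determinant).} I would first expand $\matSspk=\matS+\mat{A}$ with $\matS:=\matX^T\matY$, where $\mat{A}$ collects three families of rank-one terms of the form $\sqrt{\lyk}\,\matX^T\usyk(\vsyk)^T$, $\sqrt{\lxk}\,\vsxk(\matY^T\usxk)^T$, and $\sqrt{\lxk\lyk}\,\langle\usxk,\usyk\rangle\,\vsxk(\vsyk)^T$. The third family re-uses vectors already present in the first two, so $\rank(\mat{A})\leq 2r$; in the Hermitian embedding $\eta(\matSspk)=\eta(\matS)+\eta(\mat{A})$ the perturbation has rank $K\leq 4r$ and factorizes as $\tilde{\mat{W}}\tilde{\mat{J}}\tilde{\mat{W}}^T$. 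For $\sigma$ strictly above $\varsigma_+$, the resolvent $G(z):=(zI-\eta(\matS))^{-1}$ exists at $z=\pm\sigma$, and the matrix determinant lemma gives
\begin{equation*}
\sigma\text{ is an outlier}\quad\Longleftrightarrow\quad \det\!\left(I_K-\tilde{\mat{J}}\,\tilde{\mat{W}}^T\,G(\sigma)\,\tilde{\mat{W}}\right)=0.
\end{equation*}

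\emph{Step 2 (asymptotics of the entries).} Using the block form of $G$, every entry of $\tilde{\mat{W}}^TG(\sigma)\tilde{\mat{W}}$ reduces to a bilinear form in $(z^2I-\matS\matS^T)^{-1}$, $(z^2I-\matS^T\matS)^{-1}$, or $\matS$ sandwiched between them, evaluated against vectors from the deterministic family $\{\vsxk,\vsyk\}_k$ or the Gaussian family $\{\matX^T\usyk,\matY^T\usxk\}_k$. All these vectors are (conditionally) independent of the spectral data of $\matS$, so Hanson--Wright concentration combined with Prop.~\ref{prop:bulk}, the resolvent identity $\matS(z^2I-\matS^T\matS)^{-1}=(z^2I-\matS\matS^T)^{-1}\matS$, and the defining relation \eqref{eq:fixedpoint_transform} deliver almost-sure limits expressible rationally in $(\sigma^2,t(\sigma^2),\alpha_x,\alpha_y)$.

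\emph{Step 3 (factorization and sticking).} Assumption~(A2) kills all cross-index contributions, so the limiting $K\times K$ matrix is block-diagonal over $k\in\integset{r}$. For each $k$, direct algebraic simplification of the $k$-th block determinant — using \eqref{eq:fixedpoint_transform} to eliminate $\sigma^2$ in favour of a single variable $\tilde{r}$ proportional to $t(\sigma^2)$ — collapses to $R_{(\lxk,\lyk,\rho_k)}(\tilde{r})=0$; the two positive roots $\mathrm{r}_k^\pm$ then give the candidate outlier locations $\sigma=b(\mathrm{r}_k^\pm)$ through the correspondence $P_{(\alpha_x,\alpha_y)}(\tilde r,\sigma^2)=0\iff\sigma=b(\tilde r)$. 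When $1/\mathrm{r}_k^\pm\geq 1/\uptau^+$ these are genuine outliers above the bulk; otherwise a Weyl/Cauchy-interlacing argument (an upper bound from the $2r$ rank bound on $\mat{A}$, a matching lower bound from explicit trial vectors in the signal subspace) pins the corresponding top singular value to $\varsigma_+$, which is exactly $b(\mathrm{r}_k^\pm)$ by the definition \eqref{eq:def_bbpfunction}. The main obstacle sits in Step 2: since $\matS=\matX^T\matY$ is itself a product of independent Gaussians, its resolvent is not a Wishart resolvent and its off-diagonal blocks mix two layers of randomness, so evaluating bilinear forms of $(z^2I-\matS\matS^T)^{-1}\matS$ against the random vectors $\matX^T\usyk$ requires either a linearization trick or a careful direct Gaussian-calculus argument; the supporting challenge is the algebraic verification in Step 3 that the four families of bilinear limits conspire, after elimination of $t(\sigma^2)$, into the compact cubic $R$ — the identity that ultimately underpins the two-outliers-per-spike structure.
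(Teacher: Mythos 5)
Your proposal is correct and follows essentially the same route as the paper: Hermitian embedding plus the matrix determinant lemma to reduce the outlier condition to a per-spike $4\times4$ determinant, concentration of quadratic forms together with free-probability subordination (the paper's resolution of your Step~2 obstacle, via $\Tr\,\matX(z^2-\matS\matS^T)^{-1}\matX^T=\Tr\,\mat{G}_{\matSx^{1/2}\matSy\matSx^{1/2}}(z^2)\matSx$) to get limits in terms of $t(z^2)$, and elimination of $z$ via $P_{(\alpha_x,\alpha_y)}(t/\alpha_x,z^2)=0$ to collapse each block to $R_{(\lxk,\lyk,\rho_k)}(t(z^2))=0$, with sticking to $\varsigma_+$ handled through the range $(0,\uptau^+]$ of $t$ on $(\varsigma_+,\infty)$.
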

\begin{proof}
    The proof of this Theorem is detailed in Sec.~\ref{sec:proof_singularvalues}. 
\end{proof}
\vspace{0.5em}
Thm.~\ref{thm:singularvalue} identifies the threshold at which the first outlier separates from the bulk, thus distinguishing the spiked model from the purely noisy one, as the solution to an implicit equation involving the roots of the two cubic polynomials described in Lemma~\ref{lem:roots_polynomials}. Specifically, the model is said to be at criticality, meaning that the planted signal is just strong enough to produce a detectable spectral spike, when 
\begin{align}
\label{eq:threshold}
    \mathrm{min}_{k \in \integset{r}} \{ \mathrm{r}^-( \lambda_{x,k},\lambda_{y,k},\rho_k) \} = \uptau^+( \alpha_x, \alpha_y) \, . 
\end{align}
By Lemma~\ref{lem:roots_polynomials}, $ \mathrm{r}^- $ is a decreasing function of $ \rho^2 $, the squared correlation between the latent variables, and $ b(\cdot) $ is a non-increasing function of its argument. Therefore  if we denote by $k_0$ the argmin in Eq.~\eqref{eq:threshold}, as the squared correlation increases while keeping other parameters fixed, the leading outlier at $b(\mathrm{r}_{k_0}^-)$ moves further away from the edge of the bulk, thereby improving detectability. Note that, conversely, by Lemma~\ref{lem:roots_polynomials} the outlier at $b(r_{k_0}^+)$ moves towards the rightmost edge. In Fig.~\ref{fig:figure_BBP_sv.pdf}, we have illustrated the theoretical prediction given by Thm.~\ref{thm:singularvalue} against empirical simulations at finite but large dimensions, showing good agreement. 
\vspace{0.5em}
The phase diagram in the $(\lambda_{x}, \lambda_y)$ for the spectral methods based on the cross-covariance matrix (and hence of PSL as well) is illustrated in Fig.~\ref{fig:figure_phasediagram}. In the same figures, we have illustrated the threshold for PCA on the single-channel spiked models, the CCA matrix \cite{ma2023sample,bykhovskaya2023high} and the Bayes approach \cite{keup2024optimal}. In particular, although the parameter region for detectability expands as the squared correlation $ \rho^2 $ increases, we note that for small but non-zero values of $ \rho^2 $, it is possible to enter a regime where detection is achievable via PCA on the \emph{single-channel}, but \emph{not} on the cross-covariance matrix. This raises questions about the practical advantage of using the cross-covariance matrix and PLS approach in such scenarios, despite their widespread use. 
 
\begin{figure}[t]
    \centering
\includegraphics[width=0.49\textwidth]{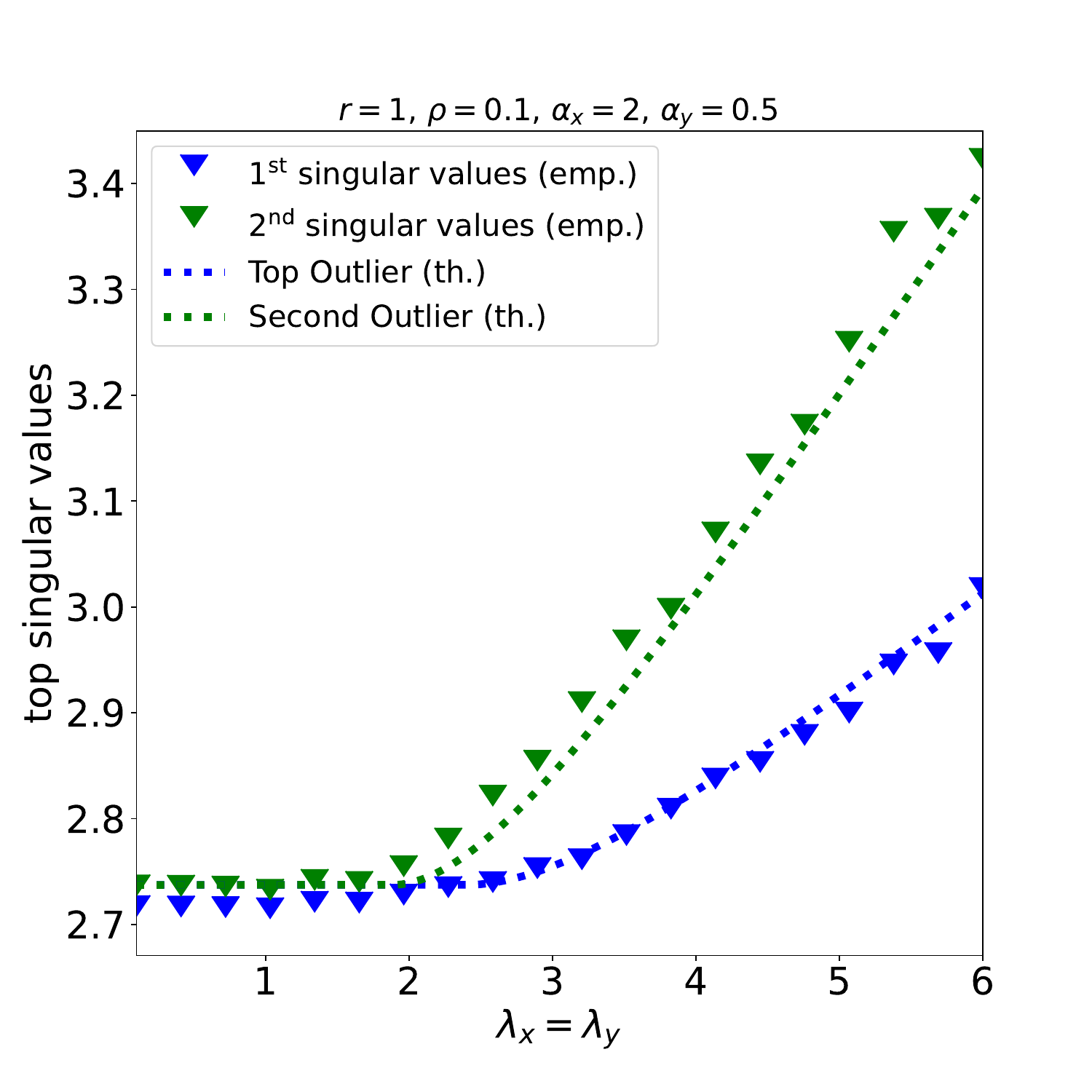}
    \caption{Empirical and theoretical values of the top two singular values of the spiked cross-covariance matrix~\eqref{eq:def_matSspk} with one spike ($r = 1$),  plotted as functions of the signal strengths $\lambda_x = \lambda_y$. All other parameters ($\alpha_x$, $\alpha_y$, $\rho$) are fixed. Each empirical data point is obtained as an average over 10 sample  points.}
    \label{fig:figure_BBP_sv.pdf}
\end{figure}

\subsection{Asymptotics of the Top Singular Vectors}
We now turn to our second main result, which analyzes the overlaps between the singular vectors and the signal components. 
\begin{theo}[Phase Transition for the Singular Vectors]
\label{thm:overlap}
For $k \in \llbracket r \rrbracket$, let $ (\rdmvect{\Tilde{u}}^+_k,  \rdmvect{\Tilde{v}}^+_k)$ and $ (\rdmvect{\Tilde{u}}^-_k,  \rdmvect{\Tilde{v}}^-_k)$ denote the unit-norm singular vectors of $\matSspk$ associated with the two singular values whose limiting values are given by $b(\mathrm{r}_k^+)$ and $b(\mathrm{r}_k^-)$ respectively as stated in Thm.~\ref{thm:singularvalue}. Then for any $k \in \llbracket r \rrbracket$ we have :
\begin{align*}
     \langle \rdmvect{\Tilde{u}}^\pm_k, \vect{v}^{\star}_{x,k} \rangle^2 
     &\xrightarrow[n \to \infty]{\as} 
   \mathrm{m}^{\pm}_{x,k} 
    \quad , \quad 
    \langle \rdmvect{\Tilde{v}}^\pm_k, \vect{v}^{\star}_{y,k} \rangle^2 
    \xrightarrow[n \to \infty]{\as} 
    \mathrm{m}^{\pm}_{y,k} ,
\end{align*}
where for $z \in \{x,y\}$, $\mathrm{m}^{\pm}_{z,k}$ is positive if $1/\mathrm{r}_k^\pm \geq 1/\uptau^+$ and null otherwise, with expression given in App.~\ref{sec:app:endofproof_overlap}. 
\end{theo}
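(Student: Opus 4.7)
The plan is to extend the outlier-level analysis used for Thm.~\ref{thm:singularvalue} to the vector level, in the spirit of the BBP-type overlap results of Benaych-Georges--Nadakuditi. The key observation is that the squared overlap between a fixed deterministic direction and an outlier singular vector of $\matSspk$ is extracted as the residue of a suitable resolvent quadratic form at the outlier eigenvalue. Concretely, for the right singular vectors one has
\begin{align*}
\langle \rdmvect{\Tilde{v}}^\pm_k, \vect{v}^\star_{y,k}\rangle^2
=
\mathrm{Res}_{z = b(\mathrm{r}_k^\pm)^2} \bigl\langle \vect{v}^\star_{y,k}, (z - \matSspk^T \matSspk)^{-1} \vect{v}^\star_{y,k}\bigr\rangle,
\end{align*}
and symmetrically for $\langle \rdmvect{\Tilde{u}}^\pm_k, \vect{v}^\star_{x,k}\rangle^2$ via $\matSspk \matSspk^T$. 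The task thus reduces to computing these residues asymptotically.

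First I would reuse the decomposition of $\matSspk$ as a noise part $\matS = \matX^T \matY$ plus a finite-rank perturbation carried by the spikes $\{\usxk,\vsxk,\usyk,\vsyk\}_{k=1}^r$ set up in Sec.~\ref{sec:proof_singularvalues}. Applying the Woodbury identity to the resolvent of $\matSspk^T \matSspk$, or equivalently to the Hermitian embedding $\eta(\matSspk)$ used in the proof of Thm.~\ref{thm:singularvalue}, rewrites it as an explicit low-rank correction of the noise resolvent. Under Assumption (A1), Gaussian concentration replaces the quadratic forms of the noise resolvent in the fixed signal directions by their deterministic equivalents, expressed through the $T$-transform $t_{\mu_{(\alpha_x,\alpha_y)}}$ of Prop.~\ref{prop:bulk} and its derivative.

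Next, outliers of $\matSspk$ arise as zeros (in $z$) of a small master equation $\det \mat{M}(z) = 0$, where $\mat{M}(z)$ is a matrix of size at most $4r \times 4r$ built from the signal basis with entries involving the limiting transforms; this is the equation whose roots are identified with $b(\mathrm{r}_k^\pm)^2$ in the proof of Thm.~\ref{thm:singularvalue}. A classical residue calculation shows that the residue of $\langle \vect{w}, (z - \matSspk^T \matSspk)^{-1} \vect{w}\rangle$ at such a zero factors through the null vector / cofactor of $\mat{M}(z_\star)$ divided by $\det' \mat{M}(z_\star)$. Substituting the explicit entries and repackaging in terms of the same cubic polynomial $R_{(\lambda_{x,k},\lambda_{y,k},\rho_k)}$ whose roots define $\mathrm{r}_k^\pm$ will yield the closed form $\mathrm{m}^\pm_{z,k}$ stated in App.~\ref{sec:app:endofproof_overlap}. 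The dichotomy in the theorem is precisely that between an outlier lying strictly outside the bulk (when $1/\mathrm{r}_k^\pm \geq 1/\uptau^+$, giving a genuine simple pole with positive residue) and one being absorbed by the bulk edge $\varsigma_+^2$ (the pole collides with the continuous spectrum, the singular vector delocalizes, and the residue vanishes).

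The main obstacle, inherited from Thm.~\ref{thm:singularvalue}, is the cross-channel coupling created by the correlations $\rho_k$: since $\usxk$ and $\usyk$ are not orthogonal, $\mat{M}(z)$ is not block-diagonal at fixed $k$, and the two outliers $b(\mathrm{r}_k^\pm)$ associated with the same index are generated jointly by a non-trivial $4 \times 4$ sub-block. One must therefore identify the kernel of $\mat{M}(z_\star)$ at each of the two roots separately, so as to determine which singular vector carries which overlap profile, and handle carefully the degenerate situation $\mathrm{r}_k^+ = \mathrm{r}_k^-$ (double root of $R$). The exponentially fast decay of all other cross inner products in Assumption (A2) decouples distinct indices $k \neq \ell$ up to a negligible error, and combining this with Gaussian concentration for $\matX, \matY$ upgrades convergence in probability to almost-sure convergence via Borel--Cantelli.
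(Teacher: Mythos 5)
Your proposal follows essentially the same route as the paper: express the overlap as the residue of a resolvent quadratic form at the outlier (Lemma~\ref{lem:overlap_resolvent}), apply the Woodbury identity to reduce to a $4\times 4$ master matrix built from the signal directions (Lemma~\ref{lem:concentration_diagonal4by4_spk}), replace the noise quadratic forms by their concentration limits in terms of the $T$-transform, and extract the residue as a cofactor-over-derivative ratio of the determinant $j_k$, which gives the dichotomy between a genuine simple pole (outlier outside the bulk) and a vanishing overlap. The only cosmetic difference is that you work directly with $(z-\matSspk^T\matSspk)^{-1}$ and $(z-\matSspk\matSspk^T)^{-1}$, whereas the paper uses the Hermitian embedding $\eta(\matSspk)$ and its resolvent $\matGspk$, which introduces the factor of $2$ bookkept by Lemmas~\ref{lem:decomposition-chiral} and~\ref{lem:block-decomposition-resolvent-chiral}; the two are equivalent.
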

\begin{proof}
    The proof of this Theorem is detailed in Sec.~\ref{sec:proof_singularvectors}. 
\end{proof}
\vspace{0.5em}
\textbf{Remark }{(Suboptimal Rotation)}\textbf{.} This theorem implies that when $1/\mathrm{r}_k^{+} \geq 1/\uptau^+$, the two left (resp. right) singular vectors associated with $b(\mathrm{r}^-_k)$ and $b(\mathrm{r}^+_k)$ \textbf{both} correlate with $\vect{v}^\star_{x,k}$ (resp. with $\vect{v}^\star_{y,k}$). In other words, a more accurate estimation can be achieved by appropriately \emph{rotating} these two vectors. The optimal angle of rotation depends on the parameters of the model and its derivation is given in Appendix \ref{sec:app:angle_rotation}.

\section{APPLICATIONS TO THE FUNDAMENTAL LIMITS OF PLS METHODS}
Following the description in \cite{keup2024optimal} (see also \cite{wegelin2000survey}), the canonical (or "mode-A") PLS algorithm with $r_0$ steps, constructs rank-$r_0$ approximations of $\matXspk$ and $\matYspk$ in order to estimate the underlying low-rank structures. This is achieved by iterating the following five steps $r_0$ times:

\begin{itemize}
    \item[(1)] compute the top left and right singular vectors $\rdmvect{\Tilde{u}}$, $\rdmvect{\Tilde{v}}$ associated with the top singular value of $\matSspk$;
    \item[(2)] estimate the left (‘$u$') components by $ \widehat{\rdmvect{u}}^{(\mathrm{PLS})}_{x} = \matXspk \rdmvect{\Tilde{u}}$ and $\widehat{\rdmvect{u}}^{(\mathrm{PLS})}_{y} = \matYspk \rdmvect{\Tilde{v}}$;
    \item[(3)] refine the estimates of the right (‘$v$') components by $ \widehat{\rdmvect{v}}^{(\mathrm{PLS})}_{x} =  \|  \widehat{\rdmvect{u}}^{(\mathrm{PLS})}_{x} \|^{-2}  \matXspk^T \matXspk  \rdmvect{\Tilde{u}}$ and  $ \widehat{\rdmvect{v}}^{(\mathrm{PLS})}_{y} =  \|  \widehat{\rdmvect{u}}^{(\mathrm{PLS})}_{y} \|^{-2}  \matYspk^T \matYspk  \rdmvect{\Tilde{v}}$.
    \item[(4)] subtract the resulting rank-one approximations from each data matrix individually, updating $\matXspk \leftarrow \matXspk -\widehat{\rdmvect{u}}^{(\mathrm{PLS})}_{x} (\widehat{\rdmvect{v}}^{(\mathrm{PLS})}_{x})^T$ and $\matYspk \leftarrow \matYspk -\widehat{\rdmvect{u}}^{(\mathrm{PLS})}_{y} (\widehat{\rdmvect{v}}^{(\mathrm{PLS})}_{y})^T$;
    \item[(5)] repeat from step (1).
\end{itemize}
\vspace{0.5em}
A simplified variant, known as \textbf{PLS-SVD}, omits step~(3), reducing PLS to a Lanczos-type procedure (a rank-$r_0$ power method), where the right components $v$ are estimated directly from the top $r_0$ singular vectors of the cross-covariance matrix.
\vspace{0.5em}
By construction, both variants rely on the spectral properties of the cross-covariance matrix; their weak recovery thresholds, the point at which the estimates start to correlate with the underlying signals, are identical and corresponds to the point where the top singular vectors acquire non-zero overlap with the planted signals. According to Thm.~\ref{thm:overlap}, it coincides with the emergence of the first spectral outlier and is explicitly characterized by Eq.~\eqref{eq:threshold}. In other words, in the simple case with a single hidden direction in each source ($r=1$), this can be written formally as the following result.

\begin{cor}
Consider the channels in Eq.~\eqref{eq:initial_model_X} and Eq.~\eqref{eq:initial_model_Y} with $r=1$. For $z \in \{x,y\}$, let $\widehat{\rdmvect{v}}^{(\mathrm{PLS})}_{z}$ denote the estimator obtained from either of the two PLS variants described above with one ($r_0=1$) step. Then,
\begin{align*}
    \big\langle \widehat{\rdmvect{v}}^{(\mathrm{PLS})}_{z}, \vect{v}^{\star}_{z} \big\rangle^2 
    \xrightarrow[n \to \infty]{\as} 
    \mathrm{m}^{(\mathrm{PLS})}_{v,z},
\end{align*}
where $\mathrm{m}^{(\mathrm{PLS})}_{v,z}>0$ if and only if $1/\mathrm{r}^\pm \geq 1/\uptau^+$, and $\mathrm{m}^{(\mathrm{PLS})}_{v,z}=0$ otherwise, and an analogous statement holds for the estimators $\widehat{\rdmvect{u}}^{(\mathrm{PLS})}_{z}$.
\end{cor}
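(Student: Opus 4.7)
The overall plan is to reduce both PLS variants to statements about the top singular vectors of $\matSspk$ and then invoke \Cref{thm:overlap}. Both variants construct the $v$-estimators from the leading singular vectors of $\matSspk$, so the weak-recovery threshold is driven by the emergence of a nontrivial overlap of $(\tilde{u},\tilde{v})$ with the planted $(\vsx,\vsy)$, which is exactly the BBP-type transition established in \Cref{thm:overlap}.

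For the PLS-SVD variant step (3) is skipped, and the $v$-estimators are literally the leading singular vectors of $\matSspk$: in the $r=1$ case, $\widehat{\rdmvect{v}}^{(\mathrm{PLS})}_x = \tilde{u}^-$ and $\widehat{\rdmvect{v}}^{(\mathrm{PLS})}_y = \tilde{v}^-$, associated to the leading singular value $b(\mathrm{r}^-)$ by \Cref{thm:singularvalue} and Lemma~\ref{lem:roots_polynomials}. \Cref{thm:overlap} then yields $\langle \widehat{\rdmvect{v}}^{(\mathrm{PLS})}_z,\vect{v}^{\star}_z\rangle^2 \xrightarrow[n\to\infty]{\as} \mathrm{m}^-_{z,1}$, which is positive iff $1/\mathrm{r}^- \geq 1/\uptau^+$, settling this variant.

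For canonical PLS, I would analyze the refined estimator $\widehat{\rdmvect{v}}^{(\mathrm{PLS})}_x \propto \matXspk^T\matXspk\,\tilde{u}^-$. Inserting $\matXspk = \matX + \sqrt{\lambda_x}\,\usx(\vsx)^T$ gives
\begin{align*}
\matXspk^T\matXspk\,\tilde{u}^- = \matX^T\matX\,\tilde{u}^- + \sqrt{\lambda_x}\,\vsx\,(\usx)^T\matX\,\tilde{u}^- + \sqrt{\lambda_x}\,\langle\vsx,\tilde{u}^-\rangle\,\matX^T\usx + \lambda_x\,\langle\vsx,\tilde{u}^-\rangle\,\vsx,
\end{align*}
with an analogous expansion for the norm $\|\matXspk\,\tilde{u}^-\|^2$. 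The quantity $\langle \widehat{\rdmvect{v}}^{(\mathrm{PLS})}_x,\vsx\rangle^2$ is therefore a ratio of bilinear forms in $\tilde{u}^-$, each of which I would handle separately: the terms linear in $\matX$ concentrate on zero when projected on a fixed direction, while $(\vsx)^T\matX^T\matX\,\tilde{u}^-$ and $\|\matX\tilde{u}^-\|^2$ admit explicit deterministic limits through the Marchenko--Pastur $T$-transform and the overlap $\langle\vsx,\tilde{u}^-\rangle$ provided by \Cref{thm:overlap}. Above threshold the numerator is dominated by $\lambda_x\,\langle\vsx,\tilde{u}^-\rangle$ and the denominator by a strictly positive deterministic constant, so $\mathrm{m}^{(\mathrm{PLS})}_{v,x}>0$; below threshold $\langle\vsx,\tilde{u}^-\rangle\to 0$, every remaining contribution decorrelates from $\vsx$, and $\mathrm{m}^{(\mathrm{PLS})}_{v,x}=0$. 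The $y$-side is symmetric, and step (2) immediately produces $\widehat{\rdmvect{u}}^{(\mathrm{PLS})}_z$ with the same threshold since it is a single multiplication of $\matXspk$ (resp.\ $\matYspk$) against $\tilde{u}^-$ (resp.\ $\tilde{v}^-$).

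The main obstacle is the joint dependence between $\tilde{u}^-$ and $\matX$: since $\tilde{u}^-$ is a nonlinear function of both $\matX$ and $\matY$ through the SVD of $\matSspk$, the bilinear forms above cannot be handled as if $\tilde{u}^-$ were independent of $\matX$. The cleanest route is to reuse the resolvent of the Hermitian dilation $\eta(\matSspk)$ that already underlies the proof of \Cref{thm:overlap}: a contour-integral representation of $\tilde{u}^-$ in terms of that resolvent supplies the deterministic equivalents of $(\vsx)^T\matX^T\matX\,\tilde{u}^-$ and $\|\matX\tilde{u}^-\|^2$ within the same machinery, so that the corollary follows with no further stochastic input beyond \Cref{thm:singularvalue,thm:overlap}.
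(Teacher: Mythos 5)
Your argument is correct and matches the paper's: the corollary is presented there as an immediate consequence of Thm.~\ref{thm:singularvalue} and Thm.~\ref{thm:overlap}, the weak-recovery threshold for both PLS variants being identified with the emergence of the first outlier at $b(\mathrm{r}^-)$ and the associated nonzero overlap of the top singular vectors. You in fact go further than the paper on the canonical variant: the paper simply asserts that step (3) leaves the threshold unchanged, whereas you expand $\matXspk^T\matXspk\,\rdmvect{\Tilde{u}}^-$ explicitly and correctly identify the dependence of $\rdmvect{\Tilde{u}}^-$ on $\matX$ as the point requiring the resolvent machinery rather than naive concentration; executing that step (and checking that the non-negligible contributions $(\vsx)^T\matX^T\matX\,\rdmvect{\Tilde{u}}^-$ and $\lambda_x\langle\vsx,\rdmvect{\Tilde{u}}^-\rangle$ do not cancel above threshold) is the only remaining work, and it is work the paper itself does not carry out.
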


\begin{figure*}[t]
    \centering
    \includegraphics[width=0.45\textwidth]{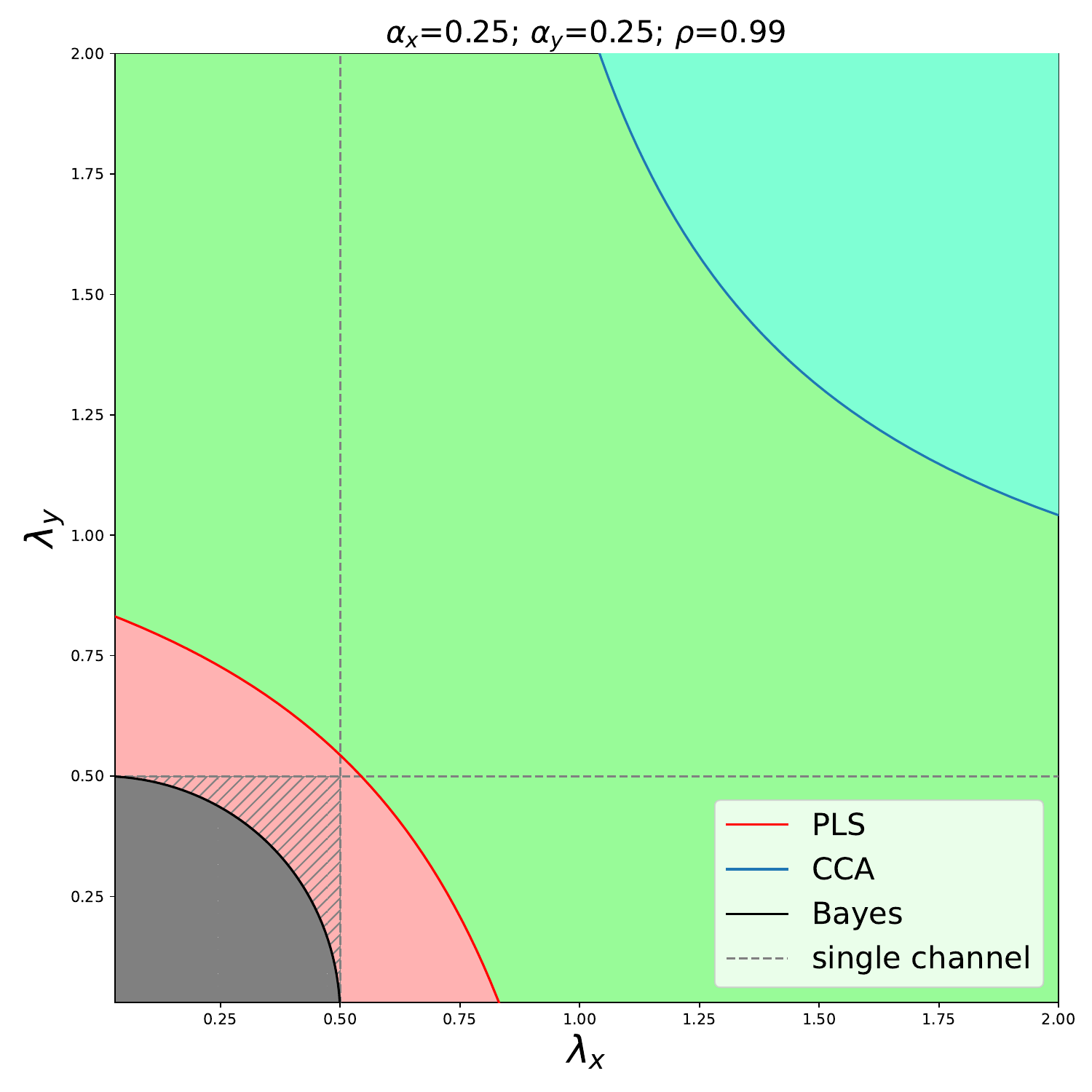}
    \caption{Phase diagram in the $(\lambda_x,\lambda_y)$ plane illustrating the detection thresholds for Bayes-optimal method \cite{keup2024optimal} (black), single-view channel SVD (gray), PLS (this paper, red), and CCA \cite{bykhovskaya2023high,ma2023sample} (blue) algorithms. The region below each curve corresponds to values of the signal strengths for which spike detection is impossible.}
    \label{fig:figure_phasediagram}
\end{figure*}

 This behavior is illustrated in Fig.~\ref{fig:figure_phasediagram}, where we compare it with the thresholds achieved by other methods (CCA, single-channel spectral methods, and the Bayes-optimal benchmark) and in particular show that \emph{PLS is most effective in asymmetric regimes} where one data channel carries a strong, clearly detectable signal while the other contains a weak signal. When the two are sufficiently correlated, the strong channel can ``lift'' the weak one, allowing joint recovery that would not be possible from the weak channel alone. Above this threshold, PLS achieves non-zero overlap with both planted directions, thereby exploiting the cross-channel correlation to transfer information. This leveraging effect is visible in the phase diagram (bottom-right region of Fig.~\ref{fig:figure_phasediagram}), where even if one signal lies below its single-channel detectability threshold, PLS succeeds provided the correlation with the strong channel is large enough. By contrast, \emph{when both signals are weak} (bottom-left region of Fig.~\ref{fig:figure_phasediagram}), \emph{PLS offers no advantage over single-channel spectral methods} (based on the within channel covariance matrices) and may even  underperform them. The Bayes-optimal estimator combines the best of both worlds: it constructs a block matrix that integrates the cross-covariance and the within-channel covariance matrices, with weights optimally chosen as functions of the model parameters and described explicitly in~\cite{keup2024optimal}.
\vspace{0.5em}
The performance of PLS when extracting multiple components ($r_0>1$) in the presence of several latent directions in each channel ($r>1$) can, in principle,  also be inferred directly from Theorem~\ref{thm:overlap} by successively projecting onto the corresponding signal subspaces. Yet this analysis is more delicate than in the rank-one case. The difficulty arises from the fact that the \emph{ limiting positions of the outlier singular values,} $\{ b(\mathrm{r}_k^+), b(\mathrm{r}_k^-)\}_{k=1}^r$, are \emph{not necessarily ordered}. For instance, one can encounter configurations where $b(\mathrm{r}_{k_1}^+) > b(\mathrm{r}_{k_2}^-)$ for some $k_1 \neq k_2$. As a result, the leading empirical singular values need not correspond to the same index $k$, and PLS-SVD therefore align with the ``wrong'' spike, that is the algorithm produces an estimator correlated with the subspace spanned by $(\vect{v}_{x,k_1},\vect{v}_{y,k_1})$ instead of the intended $(\vect{v}_{x,k_2},\vect{v}_{y,k_2})$. Although detection thresholds remain the same for PLS and PLS-SVD, the quality of alignment with the true signal subspace may be compromised when multiple spikes interact.

\section{OUTLINE OF THE PROOFS}
\label{sec:outline_proof}
\subsection{Proof of the Phase Transition for the Top Singular Values}
\label{sec:proof_singularvalues}
We introduce the \emph{resolvents} as
\begin{align}
\label{eq:resolvents}
    \rdmmat{G}(z) &= ( z - \girko(\matS) )^{-1} \, \qquad 
    \mbox{and}
    \qquad
    \rdmmat{\Tilde{G}}(z) = ( z - \girko(\matSspk) )^{-1} \, ,
\end{align}
 To ease notation, we also introduce for $k \in \llbracket r \rrbracket $ the matrices 
\begin{align}
\label{eq:projections}
    \mat{Q}^{\star}_k &:=
    \begin{pmatrix}
        \vsxk & \matX^T \usyk & \vect{0} & \vect{0} \\
        \vect{0} & \vect{0} & \vsyk & \matY^{T}\usxk  
    \end{pmatrix} \, , 
\end{align}
and the restrictions of the resolvents into the subspace generated by this matrix: 
\begin{align}
\label{eq:def_matK}
    \matKk(z) &= \matQk^T  \matG(z) \matQk 
    \;
    \mbox{,}
    \; 
    \matKspkk(z) = \matQk^T \matGspk(z) \matQk  \, .
\end{align}
\begin{lem}
\label{lem:condition_outlier}
The singular values  of $\matSspk$ that are not the singular values of $\matS = \matX^T\matY$ are given by the positive solution (in $z$) of the equation 
    \begin{align}
    \prod_{k=1}^r \mathrm{det} \left( 
    \matKk(z)
    -  \mat{A}_k
    \right) + \epsilon_n = 0 
\end{align}
where $\epsilon_n \xrightarrow[n\to \infty]{\as} 0 $ exponentially fast and 
\begin{align}
   \mat{A}_k &:= \begin{pmatrix}
        0 & 0 & 0 & \lambda_{x,k}^{-1/2}   \\
        0 & 0 &  \lambda_{y,k}^{-1/2} & - \rho_{k} \\
        0 & \lambda_{y,k}^{-1/2} & 0 & 0 \\
        \lambda_{x,k}^{-1/2} & - \rho_{k}  & 0 &0 
    \end{pmatrix} \, . 
\end{align}
\end{lem}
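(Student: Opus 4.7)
The plan is to linearize the singular-value problem via the Hermitian embedding $\girko$ and to reduce it, through the Sylvester/Woodbury determinant identity, to a determinantal equation on an $\mathrm{O}(1)$-dimensional subspace. Expanding $\matSspk = \matXspk^T\matYspk$ and using Assumption~(A2) (which gives $\langle \usxk, \usyk\rangle \to \rho_k$ and all other inner products between signal components tending to $0$, exponentially fast), one finds
\begin{align*}
\matSspk - \matS = \sum_{k=1}^r \Bigl( \sqrt{\lxk}\, \vsxk (\matY^T\usxk)^T + \sqrt{\lyk}\, (\matX^T\usyk) \vsyk^T + \rho_k\sqrt{\lxk\lyk}\, \vsxk\vsyk^T \Bigr) + \rdmmat{E}_n^{(0)} ,
\end{align*}
with $\|\rdmmat{E}_n^{(0)}\|_{\mathrm{op}} \to 0$ exponentially fast. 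Inspecting the four columns of $\matQk$ given in the lemma shows that, after Hermitian embedding, the $k$-th summand above is exactly $\matQk \mat{B}_k \matQk^T$ with
\begin{align*}
    \mat{B}_k := \begin{pmatrix} 0 & \mat{C}_k \\ \mat{C}_k^T & 0 \end{pmatrix}, \quad \mat{C}_k := \begin{pmatrix} \rho_k \sqrt{\lxk\lyk} & \sqrt{\lxk} \\ \sqrt{\lyk} & 0 \end{pmatrix} ,
\end{align*}
so that $\girko(\matSspk) = \girko(\matS) + \sum_{k=1}^r \matQk \mat{B}_k \matQk^T + \rdmmat{E}_n$ with $\|\rdmmat{E}_n\|_{\mathrm{op}} \to 0$ exponentially fast.

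A direct computation gives $\det(\mat{C}_k) = -\sqrt{\lxk\lyk}\neq 0$, and a block-inversion check shows $\mat{B}_k^{-1} = \mat{A}_k$. Set $\matQ := [\matQ_1\mid\cdots\mid\matQ_r] \in \mathbb{R}^{(d_x+d_y)\times 4r}$, $\mat{B} := \mathrm{diag}(\mat{B}_1,\dots,\mat{B}_r)$ and $\mat{A} := \mathrm{diag}(\mat{A}_1,\dots,\mat{A}_r)$. Weyl's inequality reduces the determination of the outlier eigenvalues of $\girko(\matSspk)$ to those of the exact low-rank perturbation $\girko(\matS) + \matQ\mat{B}\matQ^T$, up to an additive error of order $\|\rdmmat{E}_n\|_{\mathrm{op}}$. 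For $z$ outside the spectrum of $\girko(\matS)$, Sylvester's determinant identity then yields
\begin{align*}
\det\bigl(z\mat{I} - \girko(\matS) - \matQ \mat{B} \matQ^T\bigr) = \det\bigl(z\mat{I} - \girko(\matS)\bigr)\, \det(\mat{B})\, \det\bigl(\mat{A} - \matQ^T \matG(z)\matQ\bigr) ,
\end{align*}
and since $\det(\mat{B})\neq 0$, the outlier singular values of $\matSspk$ are the positive roots of $\det\bigl(\mat{A} - \matQ^T \matG(z)\matQ\bigr) = 0$, up to an exponentially small error.

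To obtain the product form of the lemma, write $\matQ^T \matG(z)\matQ = (\mathbf{K}_{k\ell}(z))_{1\leq k,\ell \leq r}$ for the induced $r\times r$ block decomposition, whose diagonal blocks are exactly $\mathbf{K}_{kk}(z) = \matKk(z)$. The key claim is that all off-diagonal blocks $\mathbf{K}_{k\ell}(z)$ ($k\neq\ell$) vanish in operator norm exponentially fast, uniformly for $z$ in a small compact neighborhood of $\varsigma_+$. Granting this, $\mat{A} - \matQ^T\matG(z)\matQ$ is asymptotically block-diagonal, and since each block has even dimension $4$ (so signs are trivial), one obtains
\begin{align*}
\det\bigl(\mat{A} - \matQ^T \matG(z)\matQ\bigr) = \prod_{k=1}^r \det\bigl(\matKk(z) - \mat{A}_k\bigr) + \smallo(1) .
\end{align*}
Combining all error terms into a single exponentially small $\epsilon_n$ yields the lemma.

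The main technical obstacle lies in establishing this decay of the off-diagonal blocks. Each of their sixteen scalar entries is a bilinear form $\vect{a}^T \matG(z)\vect{b}$ in which $\vect{a},\vect{b}$ are either signal vectors (essentially independent of $(\matX,\matY)$) or random vectors such as $\matX^T\usyk$, $\matY^T\usxk$. Under~(A2) the Euclidean pairings $\vect{a}^T\vect{b}$ across different spike indices $k\neq\ell$ decay exponentially fast, and the isotropic local law for the noise cross-covariance matrix, combined with the Gaussianity in~(A1), upgrades this to the same decay for $\vect{a}^T \matG(z)\vect{b}$. The entries involving $\matX^T\usyk$, which are correlated with $\matG(z)$, can be handled by a standard resolvent-identity decoupling that pulls $\usyk$ out of $\matG(z)$, reducing them to the deterministic-vector case. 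This decoupling is the only nontrivial analytic ingredient beyond the purely algebraic reduction above.
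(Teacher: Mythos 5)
Your proof is correct and follows essentially the same route as the paper's: the same finite-rank decomposition of $\girko(\matSspk)-\girko(\matS)$ into blocks $\matQk \mat{B}_k \matQk^T$ whose coefficient matrices invert to $\mat{A}_k$, the same Sylvester/chiral determinant identity reducing the outlier condition to $\det\big(\mat{A}-\matQ^T\matG(z)\matQ\big)=0$, and the same concentration argument to discard the off-diagonal blocks of $\matQ^T\matG(z)\matQ$. The only cosmetic difference is where the exponentially small error is absorbed (you place it in the matrix decomposition and invoke Weyl, while the paper keeps the finite-$n$ inner products and puts the error inside $\det\rdmmat{M}_n(z)$), which does not change the substance.
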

\begin{proof}
    The proof follows from the determinant formula and is detailed in App. \ref{sec:app:proof_lem_condition}. 
\end{proof}

\begin{lem}
\label{lem:concentration_diagonal4by4}
For any $k \in \integset{r}$, the diagonal entries of the $(4 \times 4)$ matrix $\matKk(z)  = \big(H_{ij}(z)\big)_{1 \leq i,j \leq 4}$ of the previous lemma are given asymptotically by 
\begin{enumerate}
    \item[(i)]  $H_{11}(z)
    \xrightarrow[n \to \infty]{\as} h_1(z):=  \frac{t(z^2)+1}{z}\; $ ,
    \item[(ii)] $H_{22}(z)  \xrightarrow[n \to \infty]{\as}  h_2(z) := \frac{ z \cdot t (z^2) }{ \alpha_x + \alpha_y \cdot t (z^2)} \; $,
    \item [(iii)] $\; H_{33}(z)  \xrightarrow[n \to \infty]{\as} h_3(z) := \frac{\alpha_x \alpha_y t(z^2) +1}{z}$ ,
    \item[(iv)]$\; H_{44}(z) \xrightarrow[n \to \infty]{\as}  h_4(z) :=  \frac{1}{\alpha_x} \cdot 
       \frac{ z \cdot  t(z^2) }{1+  t(z^2)} $; 
\end{enumerate}
and all other off-diagonal terms go to zero.
\end{lem}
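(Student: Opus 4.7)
Each entry of $\matKk(z)=\matQk^T\matG(z)\matQk$ is a bilinear form $\vect{q}_i^T\matG(z)\vect{q}_j$ in two columns of $\matQk$. My plan is to (i) write $\matG(z)$ in a $2{\times}2$ block form dictated by the decomposition $\mathbb{R}^{d_x+d_y}=\mathbb{R}^{d_x}\oplus\mathbb{R}^{d_y}$, (ii) reduce each diagonal entry to a deterministic trace by concentration of quadratic forms, and (iii) express the resulting trace in terms of $t(z^2)$ using the defining identity of the $T$-transform together with the cubic $P$ from Prop.~\ref{prop:bulk}. Since $\eta(\matS)^2=\operatorname{diag}(\matS\matS^T,\matS^T\matS)$, block inversion of $zI-\eta(\matS)$ gives
\begin{align*}
\matG(z) = \begin{pmatrix} z\,\mat{G}_{xx}(z^2) & \mat{G}_{xx}(z^2)\matS \\ \matS^T\mat{G}_{xx}(z^2) & z\,\mat{G}_{yy}(z^2)\end{pmatrix},
\end{align*}
with $\mat{G}_{xx}(w):=(wI_{d_x}-\matS\matS^T)^{-1}$ and $\mat{G}_{yy}(w):=(wI_{d_y}-\matS^T\matS)^{-1}$. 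Columns $1,2$ of $\matQk$ live in the top block and columns $3,4$ in the bottom, so each diagonal entry picks up only the appropriate diagonal block of $\matG$: $H_{11}=z\vsxk^T\mat{G}_{xx}(z^2)\vsxk$, $H_{22}=z\usyk^T\matX\mat{G}_{xx}(z^2)\matX^T\usyk$, $H_{33}=z\vsyk^T\mat{G}_{yy}(z^2)\vsyk$, and $H_{44}=z\usxk^T\matY\mat{G}_{yy}(z^2)\matY^T\usxk$.

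\textbf{Reduction to traces.} For $H_{11}$ and $H_{33}$, the signal vectors $\vsxk,\vsyk$ are independent of $(\matX,\matY)$ by (A2) and effectively isotropic under the log-Sobolev assumption; a Hanson--Wright-type concentration argument then yields $H_{11}(z)\simeq (z/d_x)\Tr\mat{G}_{xx}(z^2)$ and $H_{33}(z)\simeq(z/d_y)\Tr\mat{G}_{yy}(z^2)$ a.s.\ and exponentially fast. For $H_{22}$ (and symmetrically $H_{44}$), the vector $\matX^T\usyk$ is \emph{not} independent of $\mat{G}_{xx}(z^2)$ since both involve $\matX$. I would decouple using the push-through identity
\begin{align*}
\matX\bigl(wI_{d_x}-\matX^T\mat{A}\matX\bigr)^{-1}\matX^T = \bigl(wI_n-\matX\matX^T\mat{A}\bigr)^{-1}\matX\matX^T,
\end{align*}
applied with $\mat{A}=\matY\matY^T$, which rewrites $H_{22}(z)=z\usyk^T(z^2 I_n-\matX\matX^T\matY\matY^T)^{-1}\matX\matX^T\usyk$. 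Now $\usyk$ is independent of the matrix in brackets (it only involves the noises), and standard concentration yields $H_{22}(z)\simeq (z/n)\Tr[(z^2 I_n-\matX\matX^T\matY\matY^T)^{-1}\matX\matX^T]$.

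\textbf{Identification with $t(z^2)$ and main obstacle.} By definition of the $T$-transform in Prop.~\ref{prop:bulk}, $\tfrac{1}{d_x}\Tr\mat{G}_{xx}(z^2)\to g_\mu(z^2)$, and the identity $t(w)=wg_\mu(w)-1$ gives $h_1(z)=(1+t(z^2))/z$. For $H_{33}$, the exact identity $d_x\mat{G}_{xx}(z^2)$ and $d_y\mat{G}_{yy}(z^2)$ having the same non-zero eigenvalues produces a linear relation between $\tfrac{1}{d_x}\Tr\mat{G}_{xx}(z^2)$ and $\tfrac{1}{d_y}\Tr\mat{G}_{yy}(z^2)$ involving only $(d_x-d_y)/z^2$; passing to the limit and re-expressing in $t(z^2)$ yields $h_3$. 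For $H_{22}$ and $H_{44}$, the non-zero eigenvalues of $\matX\matX^T\matY\matY^T$ coincide with those of $\matS\matS^T$, but rewriting the trace $\tfrac{1}{n}\Tr[(z^2 I-\matX\matX^T\matY\matY^T)^{-1}\matX\matX^T]$ as a rational function of $t(z^2)$ is the algebraic heart of the proof: I would either invoke the subordination equations for the free multiplicative convolution of the Marchenko--Pastur spectra of $\matX\matX^T$ and $\matY\matY^T$, or set up a small closed system of resolvent identities involving auxiliary traces of $(\matX\matX^T)^a(\matY\matY^T)^b$-type products and eliminate them using the defining cubic $P(t/\alpha_x,z^2)=0$. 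This is the main obstacle; by contrast, the off-diagonal entries $H_{ij}$ with $i\neq j$ are handled by similar concentration arguments combined with the (near-)orthogonality of distinct signal directions in (A2), giving mean zero and variance $O(1/n)$ for each such entry.
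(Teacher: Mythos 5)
Your outline follows the paper's route closely: block decomposition of the chiral resolvent reduces each $H_{ll}$ to a quadratic form in a Schur-block resolvent, concentration of quadratic forms turns each diagonal entry into a normalized trace, a push-through identity decouples the $\matX$-dependence in $H_{22}$ and $H_{44}$ (the paper derives the equivalent identity by a resolvent geometric-series expansion, which amounts to the same thing), and the traces are then identified with $t(z^2)$ via free-multiplicative subordination for the product of the two Marchenko--Pastur matrices. The one step you flag as ``the main obstacle'' and do not carry out---turning $\tfrac1n\Tr\big[(z^2 I-\matSx\matSy)^{-1}\matSx\big]$ into $z\,t(z^2)/(\alpha_x+\alpha_y t(z^2))$---is resolved in the paper exactly by your first suggested option: the subordination limit $\tfrac1n\mathbb{E}\Tr\,\mat{G}_{\matSx^{1/2}\matSy\matSx^{1/2}}(z^2)\matSx\to t_{\boxtimes}(z^2)/(1+\alpha_y t_{\boxtimes}(z^2))$ (and its symmetric counterpart for $h_4$), cited from Belinschi~(2007) and Potters--Bouchaud Ch.~19, combined with $t=\alpha_x t_{\boxtimes}$ from the proof of Proposition~\ref{prop:bulk}.
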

\begin{proof}
    The details of the proofs are given in App.~\ref{sec:app:proof_lem_concentration}. They follow from concentration results for Part-(i) and the use of free probability theory for Part-(ii).  
\end{proof}
Using the limits of Lemma~\ref{lem:concentration_diagonal4by4}  in Lemma~\ref{lem:condition_outlier}, one ends up with $z$ being an outlier outside the bulk if and only if it is a (positive) zero of one of the functions 
    \begin{align}
    \label{eq:jk_determinant}
    j_k(z) = \mathrm{det} 
     \begin{pmatrix}
        h_1(z) & 0 & 0 & \lambda_{x,k}^{-1/2}   \\
        0 & h_2(z) &  \lambda_{y,k}^{-1/2} & - \rho_k \\
        0 & \lambda_{y,k}^{-1/2} & h_3(z) & 0 \\
        \lambda_{x,k}^{-1/2} & - \rho_k  & 0 & h_4(z)
    \end{pmatrix}
\end{align}
 for $k \in \integset{r}$, which by expressing everything in terms of the $T$-transform $t(z)$ thanks to the definition of the $\{h_l\}_{l=1}^4$ writes after simplification: 
\begin{align}
\label{eq:jk_ttransform}
    j_k(z) =&
        \left( t(z^2) - \frac{\alpha_x}{\lambda_{x,k}} \right)  \left( t(z^2) - \frac{\alpha_x}{\lambda_{y,k}} \right) - \rho_k^2 \alpha_x \frac{(1+t(z^2))(\alpha_x + \alpha_y t(z^2))}{z^2} ,
\end{align}
for $k \in \integset{r}$. From Prop.~\ref{prop:bulk}, $z$ and $t(z)$ are related by $P_{(\alpha_x,\alpha_y)}(t/\alpha_x,z) =0$ which further implies that $z$ is the limiting value of an outlier if it satisfies 
\begin{align}
\label{eq:rootsR}
    R_{(\lxk,\lyk,\rho_k)}(t(z^2)) = 0
\end{align}
for some $k \in \integset{r}$. Since the $T$-transform is a continuous decreasing function in the interval $(\varsigma_+, \infty)$ with maximal value obtained at $\varsigma_+$ and given by $\uptau^+$, for the existence of the roots in Eq.~\eqref{eq:rootsR} to hold, one must have that the smallest positive root $\mathrm{r}^-$ of the cubic polynomial $R_{(\lxk,\lyk,\rho_k)}$ is lower than this value $\uptau^+$, otherwise Eq.~\eqref{eq:rootsR} is never satisfied by the $T$-transform. Conversely, if $\uptau^+ > \mathrm{r}^-$ (resp. $\uptau^+ > \mathrm{r}^+$)  one gets the position of an outlier $z > \varsigma_+$ by solving $P_{(\alpha_x,\alpha_y)}( \mathrm{r}^-/\alpha_x,z) = 0$  (respectively $P_{(\alpha_x,\alpha_y)}( \mathrm{r}^+/\alpha_x,z) = 0$), yielding the desired result and concluding the proof.   

\subsection{Proof of the Phase Transition for the Top Singular Vectors}
\label{sec:proof_singularvectors}

\begin{lem}
\label{lem:overlap_resolvent}
Under the same setting as in Thm.~\ref{thm:overlap}, we have:
\begin{align*}
     \langle \rdmvect{\Tilde{v}}^\pm_k, \vect{v}^{\star}_{x,k} \rangle^2 
     &=
     2  \lim_{z \to \Tilde{\sigma}^\pm_{k}} (z - \Tilde{\sigma}^\pm_{k}) \,  \Tilde{H}_{11}(z)
    \, ,
   \\
     \langle \rdmvect{\Tilde{v}}^\pm_k, \vect{v}^{\star}_{y,k} \rangle^2 
    &=
    2 \lim_{z \to \Tilde{\sigma}^\pm_{k}} (z - \Tilde{\sigma}^\pm_{k}) \,  \Tilde{H}_{33}(z) 
    \, , 
\end{align*}
where $\Tilde{H}_{ll}(z)$ denotes the $l$-th diagonal entry of the matrix $\matKspkk$ of Eq.~\eqref{eq:def_matK} and $\{\Tilde{\sigma}^{\pm}_{k}\}_{k=1}^r$ denotes the set of the top $2r$ singular values of $\matSspk$, ordered such that $\Tilde{\sigma}^{\pm}_{k} \to b(\mathrm{r}_k^\pm)$ as in Thm.~\ref{thm:singularvalue}. 
\end{lem}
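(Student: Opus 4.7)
The plan is a direct residue computation built on the spectral decomposition of $\matGspk(z)=(z-\eta(\matSspk))^{-1}$. The first ingredient is the standard correspondence between singular vectors of a rectangular matrix and eigenvectors of its Hermitian embedding: each nonzero singular value $\Tilde{\sigma}_i$ of $\matSspk$, with left and right singular vectors $\rdmvect{\Tilde{u}}_i\in\R^{d_x}$ and $\rdmvect{\Tilde{v}}_i\in\R^{d_y}$, gives rise to two eigenvalues $\pm\Tilde{\sigma}_i$ of $\eta(\matSspk)$ with associated unit eigenvectors $\tfrac{1}{\sqrt{2}}(\rdmvect{\Tilde{u}}_i^T,\pm\rdmvect{\Tilde{v}}_i^T)^T$. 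Using the general identity $\vect{w}^T(z-M)^{-1}\vect{w}=\sum_i |\langle\vect{w},\phi_i\rangle|^2/(z-\lambda_i)$ for a symmetric $M$ with spectral data $(\lambda_i,\phi_i)$, each quadratic form $\Tilde{H}_{ll}(z)$ becomes a sum of rank-one poles at $\pm\Tilde{\sigma}_i$, each carrying a factor $1/2$ inherited from the embedding normalization.

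The second ingredient is the block-sparse structure of $\matQk$: its first and third columns read $(\vsxk^T,\vect{0}^T)^T$ and $(\vect{0}^T,\vsyk^T)^T$ respectively in $\R^{d_x+d_y}$. When sandwiching the spectral decomposition against the first column the lower block zeroes out the $\rdmvect{\Tilde{v}}_i$ contribution, so that only $\langle\vsxk,\rdmvect{\Tilde{u}}_i\rangle$ survives from each embedding eigenvector; symmetrically, for the third column only $\langle\vsyk,\rdmvect{\Tilde{v}}_i\rangle$ survives. This yields the partial-fraction expansion
\begin{equation*}
\Tilde{H}_{11}(z)=\sum_i \frac{\tfrac12 \langle\vsxk,\rdmvect{\Tilde{u}}_i\rangle^2}{z-\Tilde{\sigma}_i}+\sum_i \frac{\tfrac12 \langle\vsxk,\rdmvect{\Tilde{u}}_i\rangle^2}{z+\Tilde{\sigma}_i}
\end{equation*}
together with the analogous identity for $\Tilde{H}_{33}(z)$, with $(\vsyk,\rdmvect{\Tilde{v}}_i)$ in place of $(\vsxk,\rdmvect{\Tilde{u}}_i)$, both meromorphic with simple poles located at each $\pm\Tilde{\sigma}_i$.

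Finally, taking the residue at $z=\Tilde{\sigma}_k^\pm>0$ isolates a single term of the ``positive'' sum and gives $\lim_{z\to\Tilde{\sigma}_k^\pm}(z-\Tilde{\sigma}_k^\pm)\Tilde{H}_{11}(z)=\tfrac12\langle\vsxk,\rdmvect{\Tilde{u}}_k^\pm\rangle^2$, with the analogous statement for $\Tilde{H}_{33}$; multiplication by the prefactor $2$ removes the embedding normalization and delivers the claimed identity. The one point I would treat most carefully — and the main obstacle to stating the identity as a strict pointwise limit — is the simplicity of each outlier $\Tilde{\sigma}_k^\pm$: otherwise the residue is a spectral projector of rank greater than one and the right-hand side has to be reinterpreted in terms of a choice of unit vector in the associated eigenspace. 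By Thm.~\ref{thm:singularvalue} the top $2r$ outliers converge to the generically pairwise-distinct limits $\{b(\mathrm{r}_k^\pm)\}_{k=1}^r$, which, combined with the bulk rigidity implied by Prop.~\ref{prop:bulk}, ensures simplicity with overwhelming probability; the exceptional coinciding-outlier case already flagged in the multi-spike PLS discussion is handled by this projector interpretation and leaves the scalar identity unchanged.
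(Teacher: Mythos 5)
Your proposal is correct and is essentially the same proof the paper gives in Appendix~\ref{sec:app:proof_lem_overlap}: the paper's one-line argument invokes Lemma~\ref{lem:overlap_from_resolvent} (the residue/spectral-decomposition identity for the resolvent) and Lemma~\ref{lem:decomposition-chiral} (the correspondence between eigenvectors of $\eta(\matSspk)$ and singular vectors of $\matSspk$), and you have simply unfolded both ingredients explicitly, recovering the same $1/2$ normalization from the embedding and the factor $2$ that cancels it. Your remark about simplicity of the outlier is apt, though at finite $n$ coinciding singular values occur on a null set so the scalar identity holds almost surely as stated.
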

\begin{proof}
    The proof is detailed in App.~\ref{sec:app:proof_lem_overlap}.
\end{proof}

\begin{lem}
\label{lem:concentration_diagonal4by4_spk}
For any $k \in \integset{r}$, the diagonal entries of the $(4 \times 4)$ matrix $\rdmmat{\Tilde{H}}_{k}(z)  = \big(\Tilde{H}_{ij}(z)\big)_{1 \leq i,j \leq 4}$ of the previous lemma are given asymptotically by 
\begin{align}
    \Tilde{H}_{ll} (z)\xrightarrow[n\to \infty]{\as} h_l(z) + \frac{f_{l,k}(z)}{j_k(z)} \quad \mbox{for } l \in \integset{4} ,
\end{align}
where $h_l$ are given in Lemma~\ref{lem:concentration_diagonal4by4}, $j_k$ is defined in Eq.~\eqref{eq:jk_ttransform} and $f_{l,k}$ is given in App.~\ref{sec:app:proof_lem_concentrations_spk}.
\end{lem}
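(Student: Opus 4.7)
}

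The starting point is to rewrite the spiked matrix as a low-rank perturbation of the unspiked one. Expanding $\matSspk = (\matX + \sum_k \sqrt{\lxk}\, \usxk (\vsxk)^T)^T (\matY + \sum_k \sqrt{\lyk}\, \usyk (\vsyk)^T)$ and using Assumption (A2) (so that $\langle \usxk, \usyl \rangle \to \rho_k \delta_{kl}$ exponentially fast), I would show
\begin{align*}
\matSspk - \matS \; = \; \sum_{k=1}^r \mat{L}_k \mat{B}_k \mat{R}_k^T + \smallon(1),
\end{align*}
where $\mat{L}_k = (\vsxk, \matX^T\usyk)$, $\mat{R}_k = (\vsyk, \matY^T\usxk)$ and $\mat{B}_k = \bigl(\begin{smallmatrix} \sqrt{\lxk\lyk}\,\rho_k & \sqrt{\lxk}\\ \sqrt{\lyk} & 0\end{smallmatrix}\bigr)$. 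Hermitizing via $\eta$ produces
\begin{align*}
\eta(\matSspk) - \eta(\matS) \; = \; \sum_{k=1}^r \matQk\, \mat{A}'_k\, \matQk^T + \smallon(1), \qquad \mat{A}'_k := \begin{pmatrix} 0 & \mat{B}_k \\ \mat{B}_k^T & 0 \end{pmatrix},
\end{align*}
and a direct computation gives the key algebraic identity $(\mat{A}'_k)^{-1} = \mat{A}_k$, where $\mat{A}_k$ is the $4\times 4$ matrix introduced in Lemma~\ref{lem:condition_outlier}.

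With this identification, the Sherman–Morrison–Woodbury identity yields, for $\mat{Q}^{\star,\mathrm{full}} := (\matQ_1^{\star},\dots,\matQ_r^{\star})$ and $\mat{A}' = \mathrm{blkdiag}(\mat{A}'_1,\dots,\mat{A}'_r)$,
\begin{align*}
\matGspk(z) \; = \; \matG(z) + \matG(z)\,\mat{Q}^{\star,\mathrm{full}} \bigl[\mat{A} - (\mat{Q}^{\star,\mathrm{full}})^T \matG(z)\,\mat{Q}^{\star,\mathrm{full}}\bigr]^{-1} (\mat{Q}^{\star,\mathrm{full}})^T \matG(z),
\end{align*}
with $\mat{A} = (\mat{A}')^{-1} = \mathrm{blkdiag}(\mat{A}_1,\dots,\mat{A}_r)$. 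The next step is to establish that the cross-block entries $\matQ_l^T \matG(z)\matQ_k$ for $k\neq l$ vanish almost surely, extending the concentration arguments of Lemma~\ref{lem:concentration_diagonal4by4} (they involve inner products of planted signals with independent noise and with one another, all of which are $\smallon(1)$ by Assumption (A2)). Combined with Lemma~\ref{lem:concentration_diagonal4by4}, the inner inverse therefore becomes block-diagonal with $k$-th block asymptotic to $\mat{A}_k - \mathrm{diag}(h_1,h_2,h_3,h_4)$.

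Sandwiching with $\matQk^T\,\cdot\,\matQk$ and using the limits $\matKk(z) \to \mathrm{diag}(h_l(z))_{l=1}^4$ gives
\begin{align*}
\matKspkk(z) \; \longrightarrow \; \mathrm{diag}(h_l) \;-\; \mathrm{diag}(h_l)\,\bigl[\mathrm{diag}(h_l) - \mat{A}_k\bigr]^{-1}\,\mathrm{diag}(h_l).
\end{align*}
Since $\det(\mathrm{diag}(h_l) - \mat{A}_k) = j_k(z)$ by definition of $j_k$ in Eq.~\eqref{eq:jk_determinant}, Cramer's rule gives the $(l,l)$ entry of the bracketed inverse as $\mathrm{cof}_{ll}(z)/j_k(z)$, and the diagonal of the right-hand side is exactly $h_l(z) - h_l(z)^2\,\mathrm{cof}_{ll}(z)/j_k(z)$. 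Setting $f_{l,k}(z) := -h_l(z)^2\,\mathrm{cof}_{ll}(z)$ matches the stated form.

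The main obstacle is the joint control of the $4r \times 4r$ perturbation: decoupling the different spikes requires showing that all cross-spike entries of $(\mat{Q}^{\star,\mathrm{full}})^T \matG(z) \mat{Q}^{\star,\mathrm{full}}$ concentrate to zero, and that the approximation error in $\matSspk - \matS = \sum_k \mat{L}_k \mat{B}_k \mat{R}_k^T + \smallon(1)$ propagates harmlessly through the Woodbury step. The remaining algebraic identification of $f_{l,k}$ via cofactor expansion is then a bookkeeping task.
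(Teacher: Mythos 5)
Your proposal is correct and follows essentially the same route as the paper: write the hermitization $\eta(\matSspk)$ as a finite-rank perturbation of $\eta(\matS)$, apply the Woodbury resolvent identity (Lemma~\ref{lem:woodbury-resolvent}) to express $\matKspkk$ through $\matKk$, pass to the limit using Lemma~\ref{lem:concentration_diagonal4by4}, and read off the diagonal of the resulting $4\times4$ correction, whose common denominator is $j_k$. The only cosmetic difference is that you identify $f_{l,k}$ via Cramer's rule with $\det(\mathrm{diag}(h_l)-\mat{A}_k)=j_k$, whereas the paper performs the $4\times4$ inversion explicitly (through $\detit=\lambda_{x,k}\lambda_{y,k}j_k$); you also spell out the need to kill the cross-spike blocks of $(\mat{Q}^{\star,\mathrm{full}})^T\matG\mat{Q}^{\star,\mathrm{full}}$, a point the paper handles implicitly via Assumption~(A2) and Lemma~\ref{lem:concentrationquadraticform}.
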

\begin{proof}
    The proof of this result is deferred to App.~\ref{sec:app:proof_lem_concentrations_spk}.
\end{proof}
Taking the limit $n\to \infty$ in Lemma~\ref{lem:overlap_resolvent} with the asymptotics of Lemma~\ref{lem:concentration_diagonal4by4_spk} one gets that if there is no outlier at $z$ then by Lemma~\ref{lem:condition_outlier} $j_k(z) \neq 0$ such that in this case, the associated overlap is asymptotically zero. Conversely, if there is an outlier at $b(\mathrm{r}_k^\pm)$, then we have  $j_k(b(\mathrm{r}_k^\pm)) =0$ and one needs to evaluate $\lim_{z \to b(\mathrm{r}_k^\pm)} 2 (z - b(\mathrm{r}_k^\pm)) f_{l,k}(z)/(j_k(z))$ which by  L'Hôpital's rule yields
\begin{align}
 \langle \rdmvect{\Tilde{u}}^\pm_k, \vect{v}^{\star}_{x,k} \rangle^2 
     &\xrightarrow[n \to \infty]{\as}   - 2 \frac{f_{1,k} \big( b(\mathrm{r}_k^\pm) \big)} { (j_k)'\big( b(\mathrm{r}_k^\pm)  \big) } \, ,
\end{align}
and similarly for $ \langle \rdmvect{\Tilde{v}}^\pm_k, \vect{v}^{\star}_{y,k} \rangle^2$ with $f_{1,k}$ replaced by $f_{3,k}$. From this point, one differentiates the function \( j_k(z) \) using its expression given in Eq.~\eqref{eq:jk_ttransform}, which yields a formula involving \( z \), \( t(z^2) \), and its derivative \( t'(z^2) \). The latter can be eliminated by differentiating Eq.~\eqref{eq:fixedpoint_transform}, leading to an expression that depends only on \( z \) and \( t \). Substituting \( z \) with \( b(\mathrm{r}^\pm_k) \) and \( t(z^2) \) with \( \mathrm{r}^\pm_k \), and simplifying, yields the desired result. We refer the reader to Appendix~\ref{sec:app:endofproof_overlap} for details.

\section{CONCLUSION}
\label{sec:conclusion}

In this work, we provided a rigorous analysis of the spectral properties of spiked cross-covariance matrices in the high-dimensional regime, when the signals across the two channels are partially correlated. Building on tools from random matrix theory, we show the emergence of a Baik Ben-Arous Péché (BBP)-type phase transition in the top singular values and quantified the alignment (overlap) of the corresponding singular vectors with the ground truth signals. As a consequence of these results, we obtain new theoretical insights into the behavior of Partial Least Squares (PLS) methods in high dimensions. In particular, we identified the conditions under which PLS can successfully recover signal structure and compare it with the single-channel setting. 

\section*{Acknowledgements}
We would like to thank Christian Keup and Ilya Nemenman for insightful discussions. We acknowledge funding from the Swiss National Science Foundation grants SNSF SMArtNet (grant number 212049), OperaGOST (grant number 200021 200390).

\bibliography{biblio}

\clearpage
\appendix
\thispagestyle{empty}


\begin{center}
    {\LARGE \bfseries Appendices}\\[1em]
\end{center}

\section{Standard Linear Algebra and Random Matrix Theory Results}
\label{sec:app:linearalgebra}
In this section, we review some classical results from linear algebra and random matrix theory (RMT); see, e.g., \cite{benaych2012singular} for references.
\begin{lem}[Woodbury Resolvent identity]
\label{lem:woodbury-resolvent}
Let $\mat{A} \in \samatrix_n$, $\mat{Q} \in \matrixset_{n,d}$,  $\mat{C} \in \samatrix_d$,   $\mat{A'}= \mat{A} + \mat{Q}\mat{C}\mat{Q}^T$; $\mat{G}=(z\mat{I}-\mat{A})^{-1}$ and  $\mat{G'}=(z\mat{I}-\mat{A'})^{-1}$,then the following identity holds:
\begin{align}
   \mat{G'}
    &=
     \mat{G} +  \mat{G} \mat{Q} \mat{C}  ( \mat{I} - \mat{Q}^T \mat{G} \mat{Q} \mat{C} )^{-1}  \mat{Q}^T \mat{G} \, , 
\end{align}
in particular if we define $\mat{K}:=\mat{Q}^T \mat{G} \mat{Q}$ and $\mat{K'}:=\mat{Q}^T \mat{G'} \mat{Q}$, we have:
\begin{align}
   \mat{K'}
    &=
     \mat{K} +   \mat{K} \mat{C}  ( \mat{I} -  \mat{K} \mat{C})^{-1}   \mat{K} \, .
\end{align}
\end{lem}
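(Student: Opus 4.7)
The plan is to view $\mat{G'}^{-1}$ as a low-rank perturbation of $\mat{G}^{-1}$ and then apply the standard resolvent manipulation. The starting point is
\begin{equation*}
    \mat{G'}^{-1} \;=\; z\mat{I} - \mat{A} - \mat{Q}\mat{C}\mat{Q}^T \;=\; \mat{G}^{-1} - \mat{Q}\mat{C}\mat{Q}^T.
\end{equation*}
Multiplying this identity on the left by $\mat{G}$ and on the right by $\mat{G'}$ yields the fixed-point form $\mat{G'} = \mat{G} + \mat{G}\mat{Q}\mat{C}\mat{Q}^T\mat{G'}$, which is the only non-trivial identity needed in the whole argument.

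From this fixed-point equation everything else is purely algebraic. Sandwiching it between $\mat{Q}^T$ on the left and $\mat{Q}$ on the right gives $\mat{K'} = \mat{K} + \mat{K}\mat{C}\mat{K'}$, and isolating $\mat{K'}$ produces the compact expression $\mat{K'} = (\mat{I} - \mat{K}\mat{C})^{-1}\mat{K}$, which after rewriting as $\mat{K} + \mat{K}\mat{C}(\mat{I} - \mat{K}\mat{C})^{-1}\mat{K}$ is the second displayed identity. For the first identity I would instead only left-multiply the fixed-point equation by $\mat{Q}^T$ to obtain $(\mat{I} - \mat{K}\mat{C})\mat{Q}^T\mat{G'} = \mat{Q}^T\mat{G}$, hence $\mat{Q}^T\mat{G'} = (\mat{I} - \mat{K}\mat{C})^{-1}\mat{Q}^T\mat{G}$, and substitute this back into $\mat{G'} = \mat{G} + \mat{G}\mat{Q}\mat{C}\mat{Q}^T\mat{G'}$ to close the expression.

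The only point that requires a small justification is the invertibility of $\mat{I} - \mat{K}\mat{C}$. This is automatic in the setting of the lemma: since $z$ is implicitly taken outside the spectrum of $\mat{A'}$, the matrix $\mat{G'}$ (and hence $\mat{K'}$) exists, and the identity $(\mat{I} - \mat{K}\mat{C})\mat{K'} = \mat{K}$ combined with a dimension count on the $d\times d$ block forces $\mat{I} - \mat{K}\mat{C}$ to be invertible. There is no genuine obstacle here; the whole lemma reduces to the two-line resolvent manipulation above, and I would not expect any delicate step.
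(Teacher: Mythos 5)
The paper states this lemma as a classical linear-algebra fact (with a pointer to \cite{benaych2012singular}) and supplies no proof, so there is no in-paper argument to compare against. Your derivation is the standard one and its algebraic core is correct: the fixed-point relation $\mat{G'} = \mat{G} + \mat{G}\mat{Q}\mat{C}\mat{Q}^T\mat{G'}$ follows from $\mat{G'}^{-1} = \mat{G}^{-1} - \mat{Q}\mat{C}\mat{Q}^T$, and sandwiching or left-multiplying by $\mat{Q}^T$ closes both displayed identities.

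The one weak spot is the justification of invertibility of $\mat{I} - \mat{K}\mat{C}$. The equation $(\mat{I} - \mat{K}\mat{C})\mat{K'} = \mat{K}$ by itself does not force $\mat{I} - \mat{K}\mat{C}$ to be invertible --- nothing a priori prevents $\mat{K}$ and $\mat{K'}$ from sharing part of the kernel of $\mat{I} - \mat{K}\mat{C}$ --- and ``a dimension count on the $d\times d$ block'' does not close this gap. A clean fix: suppose $(\mat{I} - \mat{K}\mat{C})\vect{w} = 0$ and set $\vect{v} := \mat{G}\mat{Q}\mat{C}\vect{w}$. Then $\mat{Q}^T\vect{v} = \mat{K}\mat{C}\vect{w} = \vect{w}$, and $\mat{G}^{-1}\vect{v} = \mat{Q}\mat{C}\mat{Q}^T\vect{v}$ gives $(z\mat{I} - \mat{A'})\vect{v} = 0$, so $\vect{v} = 0$, hence $\vect{w} = 0$. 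Equivalently, the matrix determinant lemma gives $\det(z\mat{I} - \mat{A'}) = \det(z\mat{I} - \mat{A})\,\det(\mat{I} - \mat{K}\mat{C})$, and the left-hand side is nonzero for $z$ outside $\mathrm{Spec}(\mat{A'})$. This is a minor repair; everything else in your argument is sound.
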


\begin{lem} 
\label{lem:overlap_from_resolvent}
Let $\mat{A} \in \samatrix_n$, $\mat{G}(z) := (z \mat{I}- \mat{A})^{-1}$, we denote by $\{ \vect{u}_i \}_{i \in \llbracket n \rrbracket}$ the eigenvectors of $\mat{A}$ associated to the eigenvalues $\{\lambda_i\}_{i \in \llbracket n\rrbracket}$, counted with multiplicity. Fix $\lambda_{i_0} \in \mathrm{Spec}(\mat{A})$ and $\vect{q}\in \mathbb{R}^n$ then for any analytic function $f()$ with $f(\lambda_{i_0}) \neq 0$,
we have 
\begin{align}
    | \langle \vect{q}, \mathrm{Span} \{ \vect{u}_j  | \lambda_{j} = \lambda_{i_0}  \} \rangle|^2 = \lim_{z \to \lambda_{i_0}} \frac{(z - \lambda_{i_0})}{f(\lambda_{i_0})} \cdot   | \langle \vect{q}\, , f(\mat{A}) \mat{G}(z)   \vect{q} \rangle| \, .
\end{align}
\end{lem}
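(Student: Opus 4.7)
The plan is to reduce the identity to a direct residue computation using the spectral theorem for the Hermitian matrix $\mat{A}$. Since $\mat{A}$ is self-adjoint, we can write $\mat{A} = \sum_{i=1}^n \lambda_i \vect{u}_i \vect{u}_i^T$ with $\{\vect{u}_i\}$ an orthonormal eigenbasis, so that by the functional calculus $f(\mat{A}) = \sum_i f(\lambda_i) \vect{u}_i \vect{u}_i^T$ and $\mat{G}(z) = \sum_i (z - \lambda_i)^{-1} \vect{u}_i \vect{u}_i^T$. Since these two operators are simultaneously diagonalized in the same basis, they commute and
\begin{align*}
    f(\mat{A}) \mat{G}(z) = \sum_{i=1}^n \frac{f(\lambda_i)}{z - \lambda_i}\, \vect{u}_i \vect{u}_i^T.
\end{align*}

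Next I would compute the quadratic form by taking inner products with $\vect{q}$ on both sides, giving $\langle \vect{q}, f(\mat{A}) \mat{G}(z) \vect{q} \rangle = \sum_{i=1}^n \frac{f(\lambda_i)}{z - \lambda_i} \langle \vect{q}, \vect{u}_i \rangle^2$. The key observation is that this is a meromorphic function of $z$ whose only poles lie at the distinct eigenvalues of $\mat{A}$, with the residue at $\lambda_{i_0}$ given by $f(\lambda_{i_0}) \sum_{i :\, \lambda_i = \lambda_{i_0}} \langle \vect{q}, \vect{u}_i \rangle^2$, while the remaining terms in the sum stay bounded as $z \to \lambda_{i_0}$. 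Multiplying by $(z - \lambda_{i_0})$ and passing to the limit therefore isolates the contribution of the degenerate eigenspace, which by orthonormality is exactly the squared norm of the orthogonal projection of $\vect{q}$ onto $\mathrm{Span}\{\vect{u}_j \mid \lambda_j = \lambda_{i_0}\}$.

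Finally, dividing by $f(\lambda_{i_0}) \neq 0$ yields the stated identity. The only mild subtlety here lies in the absolute values appearing on both sides of the displayed equation: the residue is a non-negative real number (a sum of squares), and near $z = \lambda_{i_0}$ the leading behavior of $\langle \vect{q}, f(\mat{A}) \mat{G}(z) \vect{q} \rangle$ is $f(\lambda_{i_0})/(z - \lambda_{i_0})$ times this non-negative quantity, so taking the modulus and then multiplying by $(z - \lambda_{i_0})/f(\lambda_{i_0})$ produces a positive limit regardless of the sign of $f(\lambda_{i_0})$ and the direction from which $z$ approaches $\lambda_{i_0}$ along $\mathbb{R}$. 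There is no genuine obstacle in this proof beyond bookkeeping; the statement is essentially a reformulation of the residue theorem for the resolvent of a self-adjoint operator and serves as the abstract template later instantiated via Lemma~\ref{lem:condition_outlier} and Lemma~\ref{lem:concentration_diagonal4by4_spk} to extract the overlaps in Theorem~\ref{thm:overlap}.
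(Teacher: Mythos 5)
Your proof is correct and is essentially the paper's own argument: the paper likewise justifies the lemma by the eigendecomposition of the resolvent, splitting off the $\frac{1}{z-\lambda_{i_0}}\sum_{i:\lambda_i=\lambda_{i_0}}\vect{u}_i\vect{u}_i^T$ term and reading off the residue, with your version merely spelling out the functional-calculus and bookkeeping details.
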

\begin{proof}
    This follows directly from the eigendecomposition of $\mat{G}(z) = \frac{1}{z-\lambda_{i_0}} \sum_{i| \lambda_i=\lambda_{i_0}  } \vect{u}_i \vect{u}_i^T +  \sum_{i |  \lambda_i \neq \lambda_{i_0} } \frac{1}{z-\lambda_{i}}  \vect{u}_i \vect{u}_i^T $. 
\end{proof}

\begin{lem}[Eigenvalue decomposition]
\label{lem:decomposition-chiral}
    Let $\mat{B} \in \matrixset_{n,m}$ with singular value decomposition $\mat{B} = \mat{U} \mat{D} \mat{V}^T$, where $\mat{U} \in \unitarymatrixset_{n,r}$, $\mat{V} \in \unitarymatrixset_{m,r}$, $\mat{D} \in \diagmatrixset_r$. Introduce the three matrices $\mat{Q}= \begin{pmatrix}
        \mat{U} & \vect{0} \\
        \vect{0} & \mat{V}
    \end{pmatrix}$, $\mat{W}_+ := \frac{1}{\sqrt{2}} [  \mat{V} \quad \mat{U} ]^T$, $\mat{W}_- := \frac{1}{\sqrt{2}} [ \mat{V}  \quad - \mat{U} ]^T$, then we have the following decompositions for the chiral extension of $\mat{B}$:
     \begin{align}
    (i)& \qquad  \girko(\mat{B}) = \mat{Q}
        \begin{pmatrix}
            \mat{0} & \mat{D} \\
            \mat{D} &  \mat{0}
        \end{pmatrix}
       \mat{Q}^T \, ; \\
    (ii)& \qquad \girko(\mat{B}) = [ \mat{W}_+ \, \,  \mat{W}_-  ]
        \begin{pmatrix}
            \mat{D} & \mat{0} \\
            \mat{0} & - \mat{D}
        \end{pmatrix}
        [ \mat{W}_+ \, \,  \mat{W}_-  ]^T \, .
    \end{align}
\end{lem}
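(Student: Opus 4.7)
The plan is to verify both identities by direct block-matrix multiplication, starting from the thin SVD $\mat{B} = \mat{U}\mat{D}\mat{V}^T$. These are classical identities linking the SVD of a rectangular matrix to the spectral decomposition of its hermitian embedding $\eta(\mat{B})=\begin{pmatrix} 0 & \mat{B}\\ \mat{B}^T & 0\end{pmatrix}$, so no deep machinery is needed, just careful bookkeeping of block sizes.

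For part (i), I would substitute $\mat{B} = \mat{U}\mat{D}\mat{V}^T$ directly into $\eta(\mat{B})$ and expand the right-hand side. Because $\mat{Q}$ is block-diagonal, the computation
\[
\mat{Q}\begin{pmatrix} \mat{0} & \mat{D}\\ \mat{D} & \mat{0}\end{pmatrix}\mat{Q}^T
=\begin{pmatrix}\mat{U} & \mat{0}\\ \mat{0} & \mat{V}\end{pmatrix}\begin{pmatrix} \mat{0} & \mat{D}\mat{V}^T\\ \mat{D}\mat{U}^T & \mat{0}\end{pmatrix}
=\begin{pmatrix} \mat{0} & \mat{U}\mat{D}\mat{V}^T\\ \mat{V}\mat{D}\mat{U}^T & \mat{0}\end{pmatrix}
\]
is immediate, and the $(1,2)$ block recovers $\mat{B}$ while the $(2,1)$ block recovers $\mat{B}^T$, giving (i).

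For part (ii), the natural approach is to observe that the columns of $\mat{W}_\pm$ are the standard $\pm$-eigenvectors of the chiral embedding associated with the SVD pairs. Indeed, for any singular triple $(\vect{u}_i,\vect{v}_i,\sigma_i)$ of $\mat{B}$, a one-line calculation gives
\[
\eta(\mat{B})\,\frac{1}{\sqrt{2}}\begin{pmatrix}\vect{u}_i\\ \pm\vect{v}_i\end{pmatrix}=\pm\sigma_i\cdot\frac{1}{\sqrt{2}}\begin{pmatrix}\vect{u}_i\\ \pm\vect{v}_i\end{pmatrix}.
\]
Stacked over $i\in\integset{r}$, this reads $\eta(\mat{B})\,\mat{W}_\pm=\pm\mat{W}_\pm\mat{D}$, after which one only needs the orthonormality of $[\mat{W}_+\;\;\mat{W}_-]$, which follows from $\mat{U}^T\mat{U}=\mat{V}^T\mat{V}=\mat{I}_r$ and the sign cancellation in the cross terms. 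Reassembling yields the claimed factorization.

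There is no substantive obstacle here: both statements reduce to a few lines of block-matrix algebra. The only points requiring care are (a) reading the paper's shorthand $[\mat{V}\;\mat{U}]^T$ as the $(n+m)\times r$ stacking $\frac{1}{\sqrt{2}}\bigl(\mat{U}^T, \pm\mat{V}^T\bigr)^T$ consistent with the eigenvector structure above, and (b) noting that when $n+m>2r$ the remaining zero eigenspace of $\eta(\mat{B})$ simply does not appear in the thin decompositions (i) and (ii), which are statements about the nonzero spectrum only.
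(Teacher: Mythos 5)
The paper itself does not prove this lemma: it appears in Appendix~A under ``Standard Linear Algebra and Random Matrix Theory Results'' with a pointer to a reference, so there is no argument of the authors to compare against. Your verification is correct and is the standard one. Part~(i) is a direct block multiplication, exactly as you wrote. For part~(ii), your reading of the paper's shorthand is the right one: dimensionally, the top block of $\eta(\mat{B})$ lives in $\mathbb{R}^n$, so $\mat{W}_\pm$ must be the $(n+m)\times r$ stack $\tfrac{1}{\sqrt{2}}\begin{pmatrix}\mat{U}\\ \pm\mat{V}\end{pmatrix}$ (the paper's $\mat{U}/\mat{V}$ ordering appears to be a typo), and with that reading your one-line eigenvector computation $\eta(\mat{B})\mat{W}_\pm = \pm\mat{W}_\pm\mat{D}$ together with $[\mat{W}_+\;\mat{W}_-]^T[\mat{W}_+\;\mat{W}_-]=\mat{I}_{2r}$ is correct. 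One small correction to your caveat~(b): identities (i) and (ii) are exact equalities of $(n+m)\times(n+m)$ matrices, not statements ``about the nonzero spectrum only.'' The zero eigenspace of $\eta(\mat{B})$ is absent from the factorizations simply because it multiplies a zero eigenvalue and contributes nothing, so completeness is not an issue. The cleanest way to avoid even implicitly appealing to spectral completeness in your ``reassembling'' step is to expand the right-hand side of (ii) directly: it equals $\mat{W}_+\mat{D}\mat{W}_+^T - \mat{W}_-\mat{D}\mat{W}_-^T$, whose diagonal blocks $\tfrac12\mat{U}\mat{D}\mat{U}^T$ and $\tfrac12\mat{V}\mat{D}\mat{V}^T$ cancel while the off-diagonal blocks add to $\mat{U}\mat{D}\mat{V}^T$ and $\mat{V}\mat{D}\mat{U}^T$, recovering $\eta(\mat{B})$ in two lines. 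Equivalently, (ii) follows from (i) by conjugating the $2r\times 2r$ middle block with the orthogonal matrix $\tfrac{1}{\sqrt{2}}\begin{pmatrix}\mat{I}_r & \mat{I}_r\\ \mat{I}_r & -\mat{I}_r\end{pmatrix}$, since $\mat{Q}$ times this matrix is exactly $[\mat{W}_+\;\mat{W}_-]$.
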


\begin{lem}[Resolvent for Chiral Matrices]
\label{lem:block-decomposition-resolvent-chiral}
For $\mat{B} \in \nbymmatrix$, we have
\begin{align}
    \big( z \mat{I} - \girko(\mat{B}) \big)^{-1} &=
    \begin{pmatrix}
      z \cdot \mat{\Gamma}  &   \mat{\Gamma} \mat{B} \\
      \mat{B}^T \mat{\Gamma} & z \cdot \mat{\check{\Gamma}}
    \end{pmatrix}
\end{align}
with $\mat{\Gamma} := (z^2 \mat{I}- \mat{B}\mat{B}^T)^{-1}$ and $\mat{\check{\Gamma}} := (z^2 \mat{I}- \mat{B}^T\mat{B})^{-1} $. 
\end{lem}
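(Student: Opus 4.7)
The approach is elementary: I verify the claimed formula by direct block multiplication. Writing $z\mat{I} - \girko(\mat{B})$ as the block matrix with diagonal blocks $z\mat{I}_n, z\mat{I}_m$ and off-diagonal blocks $-\mat{B}, -\mat{B}^T$, I multiply on the right by the candidate right-inverse stated in the lemma and check that the four resulting blocks reproduce $\mat{I}_{n+m}$. Since $\girko(\mat{B})$ is finite-dimensional, a one-sided inverse is automatically two-sided, so this single computation suffices.

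The one non-trivial ingredient is the intertwining identity $\mat{\Gamma}\mat{B} = \mat{B}\mat{\check{\Gamma}}$, which I would establish first. It follows from the elementary commutation $(z^2\mat{I} - \mat{B}\mat{B}^T)\mat{B} = \mat{B}(z^2\mat{I} - \mat{B}^T\mat{B})$ (both sides expand to $z^2\mat{B} - \mat{B}\mat{B}^T\mat{B}$) by multiplying through by $\mat{\Gamma}$ on the left and by $\mat{\check{\Gamma}}$ on the right. The transposed form $\mat{B}^T\mat{\Gamma} = \mat{\check{\Gamma}}\mat{B}^T$ follows analogously (or by transposition, since $\mat{\Gamma}$ and $\mat{\check{\Gamma}}$ are symmetric in their respective dimensions).

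With this identity in hand, the four blocks are immediate: the $(1,1)$ block reduces to $(z^2\mat{I} - \mat{B}\mat{B}^T)\mat{\Gamma} = \mat{I}_n$ by definition of $\mat{\Gamma}$; the $(1,2)$ block reduces to $z(\mat{\Gamma}\mat{B} - \mat{B}\mat{\check{\Gamma}}) = \mat{0}$ by intertwining; the $(2,1)$ block telescopes as $-z\mat{B}^T\mat{\Gamma} + z\mat{B}^T\mat{\Gamma} = \mat{0}$ without any further input; and the $(2,2)$ block, after replacing $\mat{B}^T\mat{\Gamma}\mat{B}$ by $\mat{B}^T\mat{B}\mat{\check{\Gamma}}$, becomes $(z^2\mat{I} - \mat{B}^T\mat{B})\mat{\check{\Gamma}} = \mat{I}_m$. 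There is essentially no obstacle here; the entire proof is a short algebraic verification. The only mild subtlety is the domain of validity of $z$, which must lie outside $\mathrm{Spec}(\girko(\mat{B}))$ — equivalently, $z^2$ must not be an eigenvalue of $\mat{B}\mat{B}^T$ (or, equivalently up to zero, of $\mat{B}^T\mat{B}$) — and this is exactly the set on which both $\mat{\Gamma}$ and $\mat{\check{\Gamma}}$ are well-defined.
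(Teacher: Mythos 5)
Your proof is correct and complete: the block multiplication together with the intertwining identity $\mat{\Gamma}\mat{B}=\mat{B}\mat{\check{\Gamma}}$ (and its transpose) is exactly the standard argument for this fact, which the paper states without proof in its appendix of classical linear-algebra results. Your remark on the domain of validity of $z$ is also the right one, and nothing is missing.
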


\begin{lem}[matrix determinant lemma for chiral matrices]
\label{lem:det-lemma-chiral}
let  $\mat{B} \in \nbymmatrix$, $\mat{L} \in \matrixset_{n,d}$, $\mat{C} \in \matrixset_{d,d}$ and invertible,  $\mat{R} \in \matrixset_{m,d}$, then for any $z \notin \mathrm{Spec} \big( \girko(\mat{B}) \big)$ we have:
\begin{align}
    \mathrm{det}(z \mat{I} - \girko( \mat{B} + \mat{L}\mat{C}\mat{R}^T))
    &=
        \mathrm{det}( \mat{C})^2 \cdot     \mathrm{det}(z \mat{I} - \girko( \mat{B})) \cdot 
         \mathrm{det}(  \mat{\Upsilon}(z) ) \, ,
\end{align}
with the $(2 d \times 2 d)$ matrix $ \mat{\Upsilon}(z)$ given by
\begin{align}
    \mat{\Upsilon}(z) 
    :=  
    \begin{pmatrix}
        z \cdot \mat{L}^T \mat{\Gamma}  \mat{L} 
        &
        \mat{L}^T \mat{\Gamma}  \mat{B} \mat{R}
        \\
       (\mat{B} \mat{R})^T 
        \mat{\Gamma} \mat{L}
        &
        z \cdot \mat{R}^T 
       \mat{\check{\Gamma}} \mat{R}
    \end{pmatrix} 
    - \girko \big( (\mat{C}^{T})^{-1} \big) \, . 
\end{align}
and where $\mat{\Gamma} := (z^2 \mat{I}- \mat{B}\mat{B}^T)^{-1}$, $\mat{\check{\Gamma}} := (z^2 \mat{I}- \mat{B}^T\mat{B})^{-1} $. 
\end{lem}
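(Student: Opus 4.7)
The plan is to reduce the statement to a single application of the Weinstein–Aronszajn (matrix determinant) identity after rewriting the perturbation $\girko(\mat{L}\mat{C}\mat{R}^T)$ as a structured low-rank product. The key observation is that $\girko(\cdot)$ is linear, so
\begin{equation*}
\girko\!\bigl(\mat{B}+\mat{L}\mat{C}\mat{R}^T\bigr)=\girko(\mat{B})+\girko\!\bigl(\mat{L}\mat{C}\mat{R}^T\bigr).
\end{equation*}
Introducing the block matrices
$
\widetilde{\mat{L}}:=\begin{pmatrix}\mat{L} & \mat{0}\\ \mat{0} & \mat{R}\end{pmatrix}\in\matrixset_{n+m,2d}
$
and
$
\widetilde{\mat{C}}:=\girko(\mat{C})\in\matrixset_{2d,2d},
$
a direct block computation yields $\widetilde{\mat{L}}\,\widetilde{\mat{C}}\,\widetilde{\mat{L}}^T=\girko(\mat{L}\mat{C}\mat{R}^T)$, so the perturbation of $\girko(\mat{B})$ has been factored into a product of a $2d$-dimensional core sandwiched by $\widetilde{\mat{L}},\widetilde{\mat{L}}^T$.

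Next I would apply the standard identity $\det(\mat{M}-\mat{U}\mat{V}^T)=\det(\mat{M})\det(\mat{I}-\mat{V}^T\mat{M}^{-1}\mat{U})$ with $\mat{M}=z\mat{I}-\girko(\mat{B})$, $\mat{U}=\widetilde{\mat{L}}\widetilde{\mat{C}}$ and $\mat{V}=\widetilde{\mat{L}}$ (valid because $z\notin\mathrm{Spec}(\girko(\mat{B}))$). This gives
\begin{equation*}
\det\!\bigl(z\mat{I}-\girko(\mat{B}+\mat{L}\mat{C}\mat{R}^T)\bigr)
=\det\!\bigl(z\mat{I}-\girko(\mat{B})\bigr)\cdot
\det\!\Bigl(\mat{I}_{2d}-\widetilde{\mat{C}}\,\widetilde{\mat{L}}^T\bigl(z\mat{I}-\girko(\mat{B})\bigr)^{-1}\widetilde{\mat{L}}\Bigr).
\end{equation*}
Since $\mat{C}$ is invertible, so is $\widetilde{\mat{C}}$ with $\widetilde{\mat{C}}^{-1}=\girko\!\bigl((\mat{C}^T)^{-1}\bigr)$ (check by block multiplication), and $\det(\widetilde{\mat{C}})$ equals $\det(\mat{C})^2$ up to a sign arising from block swaps. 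Factoring $\widetilde{\mat{C}}$ out of the $2d\times 2d$ determinant and using $\det(-\,\cdot\,)=\det(\cdot)$ on $2d$-dimensional matrices reduces the last factor to
$
\det\!\bigl(\widetilde{\mat{L}}^T(z\mat{I}-\girko(\mat{B}))^{-1}\widetilde{\mat{L}}-\widetilde{\mat{C}}^{-1}\bigr).
$

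The final step is to evaluate $\widetilde{\mat{L}}^T(z\mat{I}-\girko(\mat{B}))^{-1}\widetilde{\mat{L}}$ explicitly. Substituting the chiral resolvent formula from Lemma~\ref{lem:block-decomposition-resolvent-chiral},
$
(z\mat{I}-\girko(\mat{B}))^{-1}=\begin{pmatrix}z\mat{\Gamma} & \mat{\Gamma}\mat{B}\\ \mat{B}^T\mat{\Gamma} & z\mat{\check{\Gamma}}\end{pmatrix},
$
and carrying out the block multiplication against the block-diagonal structure of $\widetilde{\mat{L}}$ yields exactly the four blocks $z\mat{L}^T\mat{\Gamma}\mat{L}$, $\mat{L}^T\mat{\Gamma}\mat{B}\mat{R}$, $(\mat{B}\mat{R})^T\mat{\Gamma}\mat{L}$, $z\mat{R}^T\mat{\check{\Gamma}}\mat{R}$ appearing in the definition of $\mat{\Upsilon}(z)$. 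Combining this with $-\widetilde{\mat{C}}^{-1}=-\girko((\mat{C}^T)^{-1})$ recovers the announced form.

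The only real care point is the tracking of scalar prefactors through the block-row permutation that identifies $\det(\widetilde{\mat{C}})$ with $\det(\mat{C})^2$ and through the $\det(-\mat{I}_{2d})$ that appears when flipping the sign inside the $2d\times 2d$ determinant; the even parity of $2d$ makes this last flip free, and the whole identity collapses to the stated equality. No randomness, concentration, or asymptotic analysis is needed — the result is a purely algebraic deterministic identity, and the substantive work lies entirely in choosing the "thickened" factorization $\widetilde{\mat{L}}\widetilde{\mat{C}}\widetilde{\mat{L}}^T$ so that a scalar Weinstein–Aronszajn step applies to the non-Hermitian perturbation $\mat{L}\mat{C}\mat{R}^T$.
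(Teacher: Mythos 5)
Your argument is correct in substance, and it is the standard derivation of this identity; note that the paper itself states this lemma without proof, as a classical linear-algebra fact collected in Appendix~\ref{sec:app:linearalgebra}, so there is no internal proof to diverge from. The three ingredients you use are all verified correctly: the factorization $\girko(\mat{L}\mat{C}\mat{R}^T)=\widetilde{\mat{L}}\,\girko(\mat{C})\,\widetilde{\mat{L}}^T$ with $\widetilde{\mat{L}}=\mathrm{BlockDiag}(\mat{L},\mat{R})$, the identity $\girko(\mat{C})^{-1}=\girko\big((\mat{C}^T)^{-1}\big)$, and the block evaluation of $\widetilde{\mat{L}}^T\big(z\mat{I}-\girko(\mat{B})\big)^{-1}\widetilde{\mat{L}}$ via Lemma~\ref{lem:block-decomposition-resolvent-chiral}, which indeed reproduces exactly the four blocks of $\mat{\Upsilon}(z)$; the Weinstein--Aronszajn step is legitimate since $z\notin\mathrm{Spec}\big(\girko(\mat{B})\big)$.

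The one place you are too quick is the sign bookkeeping that you acknowledge and then dismiss. Factoring $\girko(\mat{C})$ out of $\det\big(\mat{I}_{2d}-\girko(\mat{C})\widetilde{\mat{L}}^T(z\mat{I}-\girko(\mat{B}))^{-1}\widetilde{\mat{L}}\big)$ produces the prefactor $\det\big(\girko(\mat{C})\big)=(-1)^{d}\det(\mat{C})^2$ (the block swap costs $(-1)^{d^2}=(-1)^d$), while the subsequent flip $\det\big(\girko(\mat{C})^{-1}-\mat{K}\big)=\det\big(\mat{K}-\girko(\mat{C})^{-1}\big)$ is indeed free because the matrix is $2d\times 2d$. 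So your computation, carried out honestly, yields the stated identity with an extra factor $(-1)^d$; it does not ``collapse to the stated equality'' when $d$ is odd. A one-line sanity check: for $n=m=d=1$, $\mat{B}=0$, $\mat{L}=\ell$, $\mat{C}=c$, $\mat{R}=r$, the left-hand side is $z^2-(\ell c r)^2$, whereas $\det(\mat{C})^2\det\big(z\mat{I}-\girko(\mat{B})\big)\det\big(\mat{\Upsilon}(z)\big)=c^2\,z^2\,(\ell^2 r^2/z^2-1/c^2)=-\big(z^2-(\ell c r)^2\big)$. The discrepancy therefore lies in the statement as printed rather than in your method; since the lemma is only used to locate the zeros of the characteristic polynomial (Lemma~\ref{lem:condition_outlier}), the sign is immaterial for the paper's purposes, but your write-up should either carry the $(-1)^d$ explicitly or state that the identity holds up to this sign.
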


\begin{lem}[Concentration Quadratic Form]
\label{lem:concentrationquadraticform}
    For $\mat{A} \in \mathbb{M}^{H}_{n, n}, \mat{B} \in \mathbb{M}_{n, m}$, and $\vect{u}_i, \vect{v}_i$, we have as $n \to \infty$
    \begin{align}
        \langle \vect{u}_i , \mat{A} \vect{u}_i \rangle -n^{-1} \mathbb{E}\Tr \mat{A}\to 0 \, , \\
        \langle \vect{u}_i , \mat{A} \vect{u}_j \rangle \to 0 \, , \\
        \langle \vect{u}_i , \mat{B} \vect{v}_k \rangle \to 0 \, , && 
    \end{align}
\end{lem}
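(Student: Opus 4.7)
The plan is to derive all three statements from standard concentration inequalities for linear and quadratic forms in random vectors with independent, sub-Gaussian coordinates, as granted by the log-Sobolev assumption in (A2). Throughout, I would think of $\mat{A}$ and $\mat{B}$ as either deterministic or, in the intended application, independent of the signal vectors $\vect{u}_i,\vect{u}_j,\vect{v}_k$, with operator norm bounded with overwhelming probability (as guaranteed by the bulk result of Proposition \ref{prop:bulk} whenever $\mat{A}, \mat{B}$ are resolvents or polynomials of $\matX,\matY$ away from the spectrum).

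First, for the diagonal quadratic form I would split
\begin{align*}
\langle \vect{u}_i, \mat{A}\vect{u}_i\rangle = \sum_{l=1}^n A_{ll}\, u_{i,l}^2 + \sum_{l\neq m} A_{lm}\, u_{i,l} u_{i,m}.
\end{align*}
Conditioning on $\mat{A}$ and using that the entries $\sqrt{n}\,u_{i,l}$ are i.i.d.\ with mean zero and variance one, the first sum concentrates around $n^{-1}\Tr\mat{A}$ by a Bernstein-type estimate (applicable under the log-Sobolev hypothesis), while the Hanson--Wright inequality gives that the off-diagonal sum concentrates at zero at an exponential rate controlled by $\|\mat{A}\|_{\mathrm{F}}/n$ and $\|\mat{A}\|_{\mathrm{op}}/n$. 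Replacing $n^{-1}\Tr\mat{A}$ by $n^{-1}\mathbb{E}\Tr\mat{A}$ follows from the a.s.\ concentration of the empirical spectral trace of $\mat{A}$ in the setting of interest. A Borel--Cantelli argument then upgrades the exponential tail bound to almost-sure convergence.

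For the off-diagonal statements, the strategy is to condition on everything except one of the signal vectors. Writing $\langle \vect{u}_i,\mat{A}\vect{u}_j\rangle = \sum_l u_{i,l}\, w_l$ with $\vect{w} = \mat{A}\vect{u}_j$, the independence of $\vect{u}_i$ from $(\mat{A},\vect{u}_j)$ together with its centered i.i.d.\ coordinates gives, via Gaussian concentration, that this linear form concentrates around zero with sub-Gaussian scale $\|\vect{w}\|/\sqrt{n} = O(\|\mat{A}\|_{\mathrm{op}}/\sqrt{n})$. The same argument handles $\langle \vect{u}_i,\mat{B}\vect{v}_k\rangle$, since $\vect{u}_i$ and $\vect{v}_k$ live on different factors of the product space and are independent by Assumption (A2). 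In both cases a Borel--Cantelli argument closes the gap to almost-sure convergence.

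The main technical obstacle is ensuring the high-probability control on $\|\mat{A}\|_{\mathrm{op}}$ and $\|\mat{B}\|_{\mathrm{op}}$ that makes the Hanson--Wright and sub-Gaussian bounds non-trivial. In the applications to Lemmas \ref{lem:concentration_diagonal4by4} and \ref{lem:concentration_diagonal4by4_spk}, these matrices are resolvents of chiral embeddings evaluated at spectral parameters bounded away from the limiting support $[0,\varsigma_+^2]$, so the operator-norm bound follows from the rigidity of the bulk stated in Proposition \ref{prop:bulk}. Combining this control with the exponential concentration above, and taking a union bound over the (finite) set of indices $i,j,k$, yields the three almost-sure limits claimed.
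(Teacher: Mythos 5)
The paper does not actually provide a proof of this lemma: it is listed among ``standard linear algebra and random matrix theory results'' in Appendix~A with only a generic pointer to the literature, so your proposal cannot be compared line-by-line against the paper's argument. That said, your outline -- Hanson--Wright for the centered quadratic form, sub-Gaussian concentration of linear forms for the off-diagonal overlaps, operator-norm control from the no-outlier part of Proposition~\ref{prop:bulk}, exponential tails plus Borel--Cantelli, and a finite union bound over indices -- is the standard route and is sound in the regime where the signal vectors have i.i.d.\ log-Sobolev coordinates. Two small remarks: Hanson--Wright already controls $\langle\vect{u}_i,\mat{A}\vect{u}_i\rangle - n^{-1}\Tr\mat{A}$ in one shot, so the diagonal/off-diagonal split is unnecessary; and the replacement of $n^{-1}\Tr\mat{A}$ by $n^{-1}\mathbb{E}\Tr\mat{A}$ deserves a word of justification (it holds here because $\mat{A}$ is a resolvent-type matrix whose normalized trace concentrates exponentially fast).

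There is, however, a genuine gap. Assumption~(A2) explicitly allows the signal vectors $\vect{u}^\star_{z,k}, \vect{v}^\star_{z,k}$ to be \emph{deterministic} unit vectors. Your entire mechanism -- conditioning on $\mat{A}$ and exploiting the randomness of the coordinates of $\vect{u}_i$ via Hanson--Wright and sub-Gaussian linear-form tails -- collapses in that case, since there is nothing random left to concentrate against. The standard remedy is to swap the roles: fix the deterministic directions and use the Gaussian (hence orthogonally invariant) nature of $\matX,\matY$ from Assumption~(A1). One can either rotate $\matX,\matY$ so the deterministic vectors become canonical basis vectors and read off entries of the resolvent, or invoke an isotropic local law for $\matS\matS^T$ and the related products appearing in Lemma~\ref{lem:concentration_diagonal4by4}. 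This is a different concentration argument with the randomness on the noise side, and without it your proof only covers half of the cases admitted by~(A2). You should either add this complementary argument or restrict the statement you are proving to the random-vector case.
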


\begin{lem}
\label{lem:ap:identityproductmat}If we denote by $\mathrm{Spec}_+(\mat{A}) = \{ z \in \mathrm{Spec}(\mat{A}) \setminus 0 \}$, then for $\mat{A} \in \mathbb{R}^{n,m_1}$, $\mat{B} \in \mathbb{R}^{n,n}$, we have $\mathrm{Spec}_+(\mat{A}^T\mat{B}\mat{A}) =  \mathrm{Spec}_+( (\mat{A}\mat{A}^T)^{1/2} \mat{B}(\mat{A}\mat{A}^T)^{1/2})$. 
\end{lem}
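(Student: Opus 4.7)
The plan is to reduce both sides to the same auxiliary matrix using the classical fact that for any (rectangular) matrices $\mat{M}$ and $\mat{N}$ with compatible sizes, the products $\mat{M}\mat{N}$ and $\mat{N}\mat{M}$ share the same nonzero spectrum (with multiplicity). This is a one-line consequence of the identity
\begin{align*}
\mathrm{det}(z\mat{I} - \mat{M}\mat{N}) \, z^{k} = \mathrm{det}(z\mat{I} - \mat{N}\mat{M}) \, z^{k'},
\end{align*}
valid up to appropriate powers of $z$ reflecting the difference in sizes, so I will invoke it as a known tool rather than redo it.

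The first application takes $\mat{M} = \mat{A}^T$ and $\mat{N} = \mat{B}\mat{A}$, which gives $\mathrm{Spec}_+(\mat{A}^T\mat{B}\mat{A}) = \mathrm{Spec}_+(\mat{B}\mat{A}\mat{A}^T)$. The second application takes $\mat{M} = (\mat{A}\mat{A}^T)^{1/2}$ and $\mat{N} = \mat{B}(\mat{A}\mat{A}^T)^{1/2}$; since $(\mat{A}\mat{A}^T)^{1/2}$ is a well-defined symmetric positive semidefinite square root of $\mat{A}\mat{A}^T$, one obtains
\begin{align*}
\mathrm{Spec}_+\!\bigl((\mat{A}\mat{A}^T)^{1/2}\mat{B}(\mat{A}\mat{A}^T)^{1/2}\bigr) = \mathrm{Spec}_+\!\bigl(\mat{B}(\mat{A}\mat{A}^T)^{1/2}(\mat{A}\mat{A}^T)^{1/2}\bigr) = \mathrm{Spec}_+(\mat{B}\mat{A}\mat{A}^T).
\end{align*}
Chaining these two identities yields the stated equality of nonzero spectra.

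There is no real obstacle here; the only minor point to be careful about is that the $\mathrm{Spec}_+(\mat{M}\mat{N}) = \mathrm{Spec}_+(\mat{N}\mat{M})$ property is usually stated for square matrices, so one should explicitly note that for rectangular $\mat{M}, \mat{N}$ one can either embed both products into a common square block matrix or use the determinantal identity recalled above. If the statement is intended to include multiplicities, the same identity settles this; if only the sets are meant, the proof is even more immediate.
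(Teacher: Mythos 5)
Your argument is correct and is the standard one: apply the fact that $\mat{M}\mat{N}$ and $\mat{N}\mat{M}$ share nonzero spectrum twice, once to pass from $\mat{A}^T(\mat{B}\mat{A})$ to $\mat{B}\mat{A}\mat{A}^T$, and once to pass from $(\mat{A}\mat{A}^T)^{1/2}\bigl(\mat{B}(\mat{A}\mat{A}^T)^{1/2}\bigr)$ to $\mat{B}\mat{A}\mat{A}^T$, then chain. The paper lists this lemma among standard linear algebra facts in Appendix~\ref{sec:app:linearalgebra} without supplying a proof, so your argument fills a gap rather than paralleling or diverging from one; there is nothing to add except perhaps the remark that $\mat{A}\mat{A}^T$ is automatically positive semidefinite, so the square root is well-defined, which you already note.
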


For $\mu \in \mathcal{P}_c(\mathbb{R}_+)$ with rightmost edge $\lambda_+$, recall the definition of the $t$-transform that uniquely characterizes the distribution $\mu$:
\begin{align}
        t_{\mu}(z) := \int_{\mathbb{R}} (z-x)^{-1}x \, \mathrm{d}\mu(x) \, , 
\end{align}
which is strictly decreasing on $(\mathrm{\lambda}_+, \infty)$ and thus admits an inverse for the composition that we denote $t^{\langle -1 \rangle}$. The $S$-transform of a measure $\mu$ is defined as 
\begin{align}
    S_{\mu}(\theta) := \frac{\theta +1}{\theta t_{\mu}^{\langle -1 \rangle}(\theta)}
\end{align}
and appears naturally whenever one consider the product of two free elements:
\begin{lem}(free product).
\label{lem:freeconvolution}
Let $\mu, \nu \in \mathcal{P}_c(\mathbb{R}_+)$, then there exists a unique distribution $\mu \boxtimes\nu$ known as the free convolution of $\mu$ and $\nu$ such that 
\begin{align}
    S_{\mu \boxtimes\nu}(\theta) = S_{\mu}(\theta) \, S_{\nu}(\theta) \, .
\end{align}
Furthermore for a sequence of $(n \times n)$ independent symmetric positive definite matrices $(\mat{A}_n, \mat{B}_n)_n$ such that $\mat{A}_n$ is orthogonally invariant in law ($\mat{A}_n \ed \mat{O}\mat{A}_n \mat{O}^T$ for any orthogonal matrix $\mat{O}$) and $\mu_{\mat{A}_n} \to \mu$ and $\mu_{\mat{B}_n} \to \nu$, we have
\begin{align}
    \mu_{\mat{A}_n^{1/2}\mat{B}_n\mat{A}_n^{1/2}} \to \mu \boxtimes \nu. 
\end{align}
\end{lem}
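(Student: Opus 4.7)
The statement has two logically distinct parts: the algebraic existence and uniqueness of $\mu \boxtimes \nu$ as the unique probability measure whose $S$-transform factors, and the identification of $\mu \boxtimes \nu$ as the limiting empirical spectral distribution of $\mat{A}_n^{1/2} \mat{B}_n \mat{A}_n^{1/2}$. The plan is to treat each part separately, relying on standard free probability machinery (as compiled, e.g., in Mingo--Speicher or Voiculescu--Dykema--Nica), since this lemma sits in an appendix collecting background tools.

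For existence and uniqueness of $\mu \boxtimes \nu$, the approach is to invert the $S$-transform defined just before the lemma. Using the moment expansion $t_\mu(z) = \sum_{k \geq 1} m_k(\mu) z^{-k}$, valid for $|z|$ larger than the support of $\mu$, the map $t_\mu$ is analytic and strictly decreasing on $(\lambda_+, \infty)$, hence admits a compositional inverse $t_\mu^{\langle -1 \rangle}$ in a neighborhood of $0$. Consequently $S_\mu$ is analytic near $0$ with $S_\mu(0) = 1/m_1(\mu) > 0$, and the map $\mu \mapsto S_\mu$ is injective on $\mathcal{P}_c(\mathbb{R}_+) \setminus \{\delta_0\}$. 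One then defines $\mu \boxtimes \nu$ as the unique element of $\mathcal{P}_c(\mathbb{R}_+)$ whose $S$-transform equals $S_\mu \cdot S_\nu$; the fact that the pointwise product $S_\mu S_\nu$ is indeed the $S$-transform of a compactly supported probability measure is the content of the classical Bercovici--Voiculescu analytic theory.

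For the matrix-level convergence, I would appeal to Voiculescu's asymptotic freeness theorem: under the orthogonal invariance of $\mat{A}_n$ and its independence from $\mat{B}_n$, the pair $(\mat{A}_n, \mat{B}_n)$ is almost surely asymptotically free with respect to the normalized trace $\tau_n := n^{-1} \Tr$. Given this input, cyclicity of the trace and the moment method yield
\begin{align*}
\tau_n\big((\mat{A}_n^{1/2} \mat{B}_n \mat{A}_n^{1/2})^k\big) = \tau_n\big((\mat{A}_n \mat{B}_n)^k\big) \xrightarrow[n \to \infty]{\as} \tau\big((ab)^k\big),
\end{align*}
where $a, b$ are free positive elements in a tracial $W^*$-probability space with distributions $\mu$ and $\nu$. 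By the very definition of $\boxtimes$, the distribution of $a^{1/2} b a^{1/2}$ (equivalently of $ab$, which has identical moments by tracial cyclicity) is $\mu \boxtimes \nu$. Uniform boundedness of moments, guaranteed by compact support, then promotes convergence of moments to almost-sure weak convergence of $\mu_{\mat{A}_n^{1/2} \mat{B}_n \mat{A}_n^{1/2}}$.

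The main obstacle, if one insists on a self-contained proof rather than a citation, is establishing the asymptotic freeness input: its standard proof uses the Weingarten calculus for Haar-distributed orthogonal matrices together with a combinatorial analysis of mixed moments, showing that the contributions of non-planar pairings vanish in the large-$n$ limit. Because this is a foundational result of free probability and the present lemma serves as a standard tool within an appendix, I would cite rather than reprove this step, in line with the treatment of the other RMT lemmas in the same section.
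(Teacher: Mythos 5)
Your sketch is correct and follows the standard free-probability route: $S$-transform multiplicativity (Bercovici--Voiculescu) for the algebraic part, and Voiculescu's asymptotic freeness of independent, orthogonally-invariant ensembles together with tracial cyclicity $\tau\big((A^{1/2}BA^{1/2})^k\big) = \tau\big((AB)^k\big)$ for the matrix-convergence part. This matches the paper's own treatment, which states the lemma in the background-tools appendix without proof, deferring to the standard free-probability literature.
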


\section{Properties of the roots of the Cubic Polynomials (Lemma~\ref{lem:roots_polynomials})}
\label{sec:app:cubicroots}
For $  Q_{(\alpha_x,\alpha_y)}$ defined by Eq.~\eqref{eq:def_Pol_Q}, we have $Q(0)=1$ and $\lim_{x\to \infty}   Q_{(\alpha_x,\alpha_y)}(x) = - \infty $, since $  Q_{(\alpha_x,\alpha_y)}'(x) = - 2(\alpha_x \alpha_y + \alpha_x + \alpha_y)x - 6\alpha_x \alpha_y x^2 $ is non-zero for $x>0$, the function admits exactly one positive root. 

Similarly for $R_{(\lambda_x,\lambda_y,\rho)}$ defined by Eq.~\eqref{eq:def_Pol_R}, we have $R_{(\lambda_x,\lambda_y,\rho)}(0)=1$ and $\lim_{x \to \infty} R_{(\lambda_x,\lambda_y,\rho)}(x) = \infty$. Its discriminant is given by 
\begin{align*}
 \mathrm{Dis}(  R_{(\lambda_x,\lambda_y,\rho)}) 
 &=   (1 + \lambda_x)^2 (\lambda_x - \lambda_y)^2 (1 + \lambda_y)^2 \\
\quad &+ 2 \lambda_x \lambda_y \big(
(-1 + \lambda_y) \lambda_y^2 
+ \lambda_x^2 (-1 + \lambda_y) (1 + (-1 + \lambda_y) \lambda_y) 
\\
& \qquad + 2 \lambda_x \lambda_y (2 + \lambda_y + 2 \lambda_y^2) 
 + \lambda_x^3 (1 + \lambda_y (4 + \lambda_y))
\big) \rho^2 \\ 
\quad &+ \lambda_x^2 \lambda_y^2 \left(
10 \lambda_x (-1 + \lambda_y) \lambda_y 
+ \lambda_y^2 
+ \lambda_x^2 (1 + \lambda_y (10 + \lambda_y))
\right) \rho^4 
+ 4 \lambda_x^4 \lambda_y^4 \rho^6
\end{align*}
which is positive, hence $R_{(\lambda_x,\lambda_y,\rho)}(x)$ has three real roots on the real axis. Its derivative is given by $R'_{(\lambda_x,\lambda_y,\rho)}(x) = 1 - \lambda_x - \lambda_y - \lambda_x \lambda_y \rho^2 + 2 (\lambda_x \lambda_y -\lambda_x  - \lambda_y  ) x + 3 \lambda_x \lambda_y x^2$ which has one positive root at
\begin{align}
x_+
&=
\frac{ 
2 \lambda_x + 2 \lambda_y - 2 \lambda_x \lambda_y 
+ \sqrt{ 
(-2 \lambda_x - 2 \lambda_y + 2 \lambda_x \lambda_y)^2 
- 12 \lambda_x \lambda_y (1 - \lambda_x - \lambda_y - \lambda_x \lambda_y \rho^2)
}
}{6 \lambda_x \lambda_y}
\end{align}
hence $R_{(\lambda_x,\lambda_y,\rho)}$ itself must have two positive roots.

Next to get the dependency with $\rho^2$, we differentiate the fixed point equation $ R_{(\lambda_x,\lambda_y,\rho)}( \mathrm{r}_\pm(\rho^2))  = 0$ with respect to this parameter:
\begin{align}
    \partial_{\rho^2} R_{(\lambda_x,\lambda_y,\rho)} (\mathrm{r}_\pm(\rho^2))  + (\mathrm{r}_\pm)'(\rho^2) \cdot R'_{(\lambda_x,\lambda_y,\rho)} (\mathrm{r}_\pm(\rho^2)) = 0
\end{align}
such that
\begin{align}
    (\mathrm{r}_\pm)'(\rho^2)
    &=  
 - \frac{\partial_{\rho^2} R_{(\lambda_x,\lambda_y,\rho)} (\mathrm{r}_\pm(\rho^2)}{ R'_{(\lambda_x,\lambda_y,\rho)} (\mathrm{r}_\pm(\rho^2))}
\end{align}
We have
\begin{align}
  \partial_{\rho^2} R_{(\lambda_x,\lambda_y,\rho)} (\mathrm{r}_\pm(\rho^2)) = - 2 \lambda_x \lambda_y \mathrm{r}_\pm(\rho^2) < 0 
\end{align}
and since $ R_{(\lambda_x,\lambda_y,\rho)}$ is decreasing on $(0, x_+)$ and increasing on $(x_+, \infty)$ we have   $\mathrm{sign}( R'_{(\lambda_x,\lambda_y,\rho)} (\mathrm{r}_\pm(\rho^2))) = \pm$ from one gets the desired result.

\section{Bulk Distribution of Singular Values (Prop.~\ref{prop:bulk})}
\label{sec:app:bulk}
Since we assume $d_x \leq d_y$ without loss of generality, the singular values of $\matS$ are by definition the square root of the eigenvalues of the matrix $\rdmmat{X}^T\matY \matY^T X$ and by  Lemma~\ref{lem:ap:identityproductmat} one has
\begin{align}
    \mathrm{Spec}_+( \matX^T\matY \matY^T \matX) = \mathrm{Spec}_+ \big( (\matX \matX^T)^{1/2} \matY \matY^T (\matX \matX^T)^{1/2} \big) \, ,
\end{align}
from which one can check that the $T$-transform of the two measures are related by 
\begin{align}\label{eq:ttransf_freeproduct}
    t_{(\matX \matX^T)^{1/2} \matY \matY^T (\matX \matX^T)^{1/2}}(z) = \frac{1}{\alpha_{x}} t_{\matS \matS^T}(z) \qquad \mbox{with} \; \alpha_{x} = n/d_x \, . 
\end{align}
From classical random matrix result \cite{potters2020first}, the limiting distribution of  $\matX\matX^T$ (respectively of  $\matY\matY^T$) is known  to be given by the Marcenko-Pastur distribution of aspect ratio $\alpha_x$ (resp. $\alpha_y$) whose $S$-transform is 
\begin{align}
    S_{\mu_{\mathrm{MP}(\alpha)}}(\theta) = (1 + \alpha \theta)^{-1} \,, 
\end{align}
with $\alpha = \alpha_x$ (resp. $\alpha_y$). 
\vspace{0.5em}
Next by Assumption (A1), the two  matrices $\matX, \matY$ are independant with Gaussian entries, hence the matrices $\matX\matX^T$ is orthogonally invariant and thus using Lemma~\ref{lem:freeconvolution}, the limiting spectral distribution of $ (\matX \matX^T)^{1/2} \matY \matY^T (\matX \matX^T)^{1/2}$ is given by the free convolution $\mu_{\mathrm{MP}(\alpha_x)} \boxtimes \mu_{\mathrm{MP}(\alpha_y)}$ whose $T$-transform $t_{\boxtimes} \equiv t_{\mu_{\mathrm{MP}(\alpha_x)} \boxtimes \mu_{\mathrm{MP}(\alpha_y)}}$  satisfies: 
\begin{align}
t_{\boxtimes}(z) z &= (1+t_{\boxtimes}(z))  ( 1 + \alpha_x t_{\boxtimes}(z)) (1 + \alpha_y t_{\boxtimes}(z)) \, , 
\end{align}
and by Eq.~\eqref{eq:ttransf_freeproduct} the one for the distribution of the square of the singular values of $\matS$ is giving simply by $t(z) = \alpha_x t_{\boxtimes}(z)$ from which one reads the desired result. 

\section{Determinant Equation for the eigenvalues (Lemma ~\ref{lem:condition_outlier})}
\label{sec:app:proof_lem_condition}

If we denote by $\matS := \matX^T \matY$, the cross-covariance matrix without any spiked, we have the decomposition
\begin{align}
    \matSspk &=  \matS + \sum_{k=1}^r \rdmmat{P}_k \, ,
\end{align}
where the matrix $\rdmmat{P}_k$ is given by
\begin{align}
    \rdmmat{P}_k &= 
    \sqrt{ \lambda_{x,k} \lambda_{y,k} } \, \langle \usxk, \usyk \rangle \cdot  \vsxk  (\vsyk)^{T} 
    + \sqrt{ \lambda_{y,k} } \cdot ( \matX^{T} \usyk )  (\vsyk)^T \\
    & \quad + \sqrt{ \lambda_{x,k} } \cdot \vsxk \big( \matY^{T} \usxk \big)^T \, , 
\end{align}
such that the matrix $\rdmmat{P}= \sum_{i=1}^r \rdmmat{P}_k$ can be rewritten as
\begin{align}
    \rdmmat{P} &= 
    ( \vect{v}^\star_{x,1} , \matX^{T} \vect{u}^\star_{y,1} , \dots, \vect{v}^\star_{x,r} , \matX^{T}\vect{u}^\star_{y,r} ) \\
    &\quad \quad \mathrm{BlockDiag}( \mat{A}_1, \dots, \mat{A}_r)
    (  \vect{v}^\star_{x,1} , \matY^{T} \vect{u}^\star_{y,1}, \dots,  \vect{v}^\star_{y,1} ,  \matY^{T}\vect{u}^\star_{y,r} )^T \, ,
\end{align}
with the $(2 \times 2)$ matrices $\mat{A}_k$ defined by
\begin{align}
    \mat{A}_k:= 
    \begin{pmatrix}
        \sqrt{\lambda_{x,k} \lambda_{y,k}} \, \rho_{k}  & \sqrt{\lambda_{x,k}} \\
        \sqrt{\lambda_{y,k}} & 0
    \end{pmatrix} \qquad \mbox{with } \rho_{k} := \langle \usxk, \usyk \rangle \, .
\end{align}
For $\lambda_{x,k},\lambda_{y,k} >0$, the matrix $\mat{A}_k$ is invertible with inverse
\begin{align}
\label{eq:inversematA}
    \mat{A_k}^{-1}=
    \begin{pmatrix}
    0 & \lambda_{x,k}^{-1/2} \\
    \lambda_{y,k}^{-1/2} & - \rho_{k}
    \end{pmatrix} \, .
\end{align}
Next, to ease notations we denote by
\begin{align}
    \rdmmat{\Tilde{Z}}_{n}  &= \eta(\matSspk) := 
    \begin{pmatrix}
       \mat{0} & \matSspk \\
    \matSspk^T & \mat{0} 
    \end{pmatrix} 
    \qquad \mbox{and} \qquad 
     \rdmmat{Z}_{n}  = \eta(\matS) := 
    \begin{pmatrix}
       \mat{0} & \matS \\
    \matS^T & \mat{0} 
    \end{pmatrix} \, ,
\end{align}
by determinant lemma \ref{lem:det-lemma-chiral}, the characteristic polynomial of $\rdmmat{\Tilde{Z}}_{n} $ is given by 
\begin{align}
    \det \big( z \mat{I} -  \rdmmat{\Tilde{Z}}_{n}  \big) 
    =
    \prod_{i=1}^k (\det \mat{A}_k)^2 
    \cdot 
    \det \big( z \mat{I} -  \rdmmat{Z}_n  \big) \cdot \det \rdmmat{M}_n(z) 
\end{align}
with the matrix $\rdmmat{M}_n(z)$
\begin{align}
    \rdmmat{M}_n(z) &:=    \mathrm{BlockDiag} \left( \begin{pmatrix}
        \mat{0} & \mat{A}_k \\
        \mat{A}_k^T & \mat{0}
    \end{pmatrix}_{k=1,\dots,r} \right)^{-1}  - \,  \rdmmat{\Theta}_n(z) 
\end{align}
where since the components satisfy Assumption \textbf{A2},  we have by Lemma~\ref{lem:concentrationquadraticform}  the elements outside the block-diagonal of $ \rdmmat{\Theta}_n(z) $vanishes exponentially fast  and $\rho_{n,k} \to \rho_k$, leading to
\begin{align}
   \mathrm{det} (\rdmmat{\Theta}_n(z) - \mathrm{BlockDiag} (\matKk)) \to 0 \ , 
\end{align}
with $\matKk$ defined in Eq.~\eqref{eq:def_matK}. Since the determinant of block-diagonal matrix is the product of the determinant of its blocks and the inverse of $\mat{A}_k$ is given by Eq.~\eqref{eq:inversematA}, one gets the desired result.

\section{ Concentration of the diagonal entries (\ref{lem:concentration_diagonal4by4})}
\label{sec:app:proof_lem_concentration}

We first show that each diagonal entries can be approximated by the trace of certain random matrices. To ease notation, in the following paragraph we write $\vsx \equiv \vsxk$ (and similarly for other components) as the computation is the same for each block. 
\begin{lem} The diagonal entries of $\matKk$ satisfy
\begin{align} 
    \Big| h_{1,n}(z)  - \frac{z}{d_x}  \mathbb{E}\mathrm{Tr} (z^2 - \matS \matS^T)^{-1}  \Big| &\to 0 \, , \\
    \Big|h_{2,n}(z)  - \frac{z}{n} \mathbb{E}\mathrm{Tr}   \rdmmat{X}(z^2 - \matS \matS^T)^{-1} \rdmmat{X}  \Big| &\to 0
    \, , \\
    \Big| h_{3,n}(z)- \frac{z}{d_y} \mathbb{E}\mathrm{Tr}  (z^2 - \matS^T\matS)^{-1} \Big| &\to 0
    \, , \\
   \Big| h_{4,n}(z)- \frac{z}{n} \mathbb{E}\mathrm{Tr}  \rdmmat{Y}^{T} (z^2 - \matS^T\matS)^{-1} \rdmmat{Y}^{T} \Big|
    &\to 0 
    \, . 
\end{align}
\end{lem}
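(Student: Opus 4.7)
The plan is to combine the block decomposition of the chiral resolvent (Lemma~\ref{lem:block-decomposition-resolvent-chiral}) with the concentration of quadratic forms (Lemma~\ref{lem:concentrationquadraticform}), applied after conditioning on the noise matrices $(\matX,\matY)$. Writing $\mat{\Gamma}=(z^2-\matS\matS^T)^{-1}$ and $\mat{\check{\Gamma}}=(z^2-\matS^T\matS)^{-1}$ and reading off the four diagonal entries of $\matKk=\matQk^T\matG(z)\matQk$ using the four columns of $\matQk$ yields the explicit expressions
\begin{align*}
H_{11}(z) &= z\,(\vsxk)^T \mat{\Gamma}\, \vsxk\,,\\
H_{22}(z) &= z\,(\usyk)^T \matX\,\mat{\Gamma}\,\matX^T \usyk\,,\\
H_{33}(z) &= z\,(\vsyk)^T \mat{\check{\Gamma}}\, \vsyk\,,\\
H_{44}(z) &= z\,(\usxk)^T \matY\,\mat{\check{\Gamma}}\,\matY^T \usxk\,;
\end{align*}
the off-diagonal blocks of $\matG(z)$, which contain $\matS$ and $\matS^T$, contribute nothing because each column of $\matQk$ is supported in a single chiral block, so the associated cross terms vanish identically.

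Conditioning on $(\matX,\matY)$, each $H_{ll}(z)$ then becomes a quadratic form in a single signal vector against a $(\matX,\matY)$-measurable matrix. By Assumption~(A2) the signal vector at play is independent of the noise (in the random case via the log-Sobolev concentration alluded to in Sec.~\ref{sec:assumptions}; in the deterministic case the rotational invariance of the Gaussian noise matrices plays the same role), has squared-norm tending to $1$, and therefore falls within the scope of Lemma~\ref{lem:concentrationquadraticform}. The natural normalizations $d_x$ for $H_{11}$ (since $\vsxk\in\mathbb{R}^{d_x}$), $d_y$ for $H_{33}$ (since $\vsyk\in\mathbb{R}^{d_y}$), and $n$ for both $H_{22}$ and $H_{44}$ (since $\usxk,\usyk\in\mathbb{R}^n$) then produce exactly the four traces stated in the lemma, with an $\smallon(1)$ error coming from the concentration estimate together with the vanishing fluctuations $\|\vsxk\|^2-1\to 0$, etc. Taking expectations afterwards is harmless since the errors are already known to decay (exponentially, in the random case).

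The one point that deserves a little care is the treatment of $H_{22}$ and $H_{44}$: the natural-looking vectors $\matX^T\usyk$ and $\matY^T\usxk$ are not independent of $\matX,\matY$, so concentration cannot be applied to them directly. The resolution is to keep the latent vector $\usyk$ (respectively $\usxk$) on the outside of the quadratic form --- since it is independent of both noise matrices by Assumption~(A2) --- and to apply Lemma~\ref{lem:concentrationquadraticform} against the $(\matX,\matY)$-measurable matrix $\matX\mat{\Gamma}\matX^T$ (respectively $\matY\mat{\check{\Gamma}}\matY^T$). The residual cross-alignment $\langle\usxk,\usyk\rangle\to\rho_k$ does not enter, because each of the four diagonal quadratic forms involves the same single signal vector on both sides of the matrix; the $\rho_k$ dependence will appear only in the off-diagonal entries, which are handled separately and shown to vanish.
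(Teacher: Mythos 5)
Your proposal is correct and follows essentially the same route as the paper: read off the four diagonal entries of $\matKk$ from the chiral block structure of the resolvent (Lemma~\ref{lem:block-decomposition-resolvent-chiral}), identify them as quadratic forms in $\mat{\Gamma}=(z^2-\matS\matS^T)^{-1}$ and $\mat{\check{\Gamma}}=(z^2-\matS^T\matS)^{-1}$, and then apply the concentration-of-quadratic-forms lemma. Your explicit remark that for $H_{22}$ and $H_{44}$ one must keep the independent latent vector $\usyk$ (resp.\ $\usxk$) on the outside of the quadratic form, rather than trying to apply concentration to the $(\matX,\matY)$-dependent vector $\matX^T\usyk$, is precisely the right way to make the step rigorous and is left implicit in the paper's proof.
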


\begin{proof}
    By definition of the matrix $\rdmmat{H}$, its diagonal entries $\{h_{i,n}(z)\}_{i \in \llbracket 4 \rrbracket}$ are given by
\begin{align} 
    h_{1,n}(z) &:= \langle (\vsx \quad \vect{0}),  \rdmmat{G}(z) (\vsx \quad \vect{0}) \rangle \, , \\
    h_{2,n}(z) &:= \langle (\rdmmat{X}^T \usy \quad \vect{0}),  \rdmmat{G}(z)(\rdmmat{X}^T \usy  \quad \vect{0}) \rangle  \, , \\
    h_{3,n}(z) &:= \langle (\vect{0} \quad \vsy),  \rdmmat{G}(z)(\vect{0} \quad \vsy) \rangle  \, , \\
    h_{4,n}(z) &:= \langle (\vect{0} \quad \rdmmat{Y}^{T}\usx  ),  \rdmmat{G}(z)(\vect{0} \quad \rdmmat{Y}^{T}\usx  ) \rangle  \, , 
\end{align}
and thus correspond to quadratic forms with either the top-left corner (for $h_{1,n}, h_{2,n}$) or bottom-right corner (for $h_{3,n}, h_{4,n}$) of the resolvent matrix $\rdmmat{G}(z)$ of the hermitian matrix $\girko(\matS)$. By properties of resolvent of chiral matrices (see Lemma~\ref{lem:block-decomposition-resolvent-chiral}) the latter projections are given as the resolvent of $\matS \matS^T$ (resp. of $\matS^T \matS$) evaluated at $z^2$, such that we have:
\begin{align} 
    h_{1,n}(z) &= \langle \vsx ,  z(z^2 - \matS \matS^T)^{-1} \vsx  \rangle \, , \\
    h_{2,n}(z) &:= \langle \usy ,   z\rdmmat{X}(z^2 - \matS \matS^T)^{-1} \rdmmat{X} \usy \rangle  \, , \\
    h_{3,n}(z) &:= \langle \vsy,  z(z^2 - \matS^T\matS)^{-1} \vsy\rangle  \, , \\
    h_{4,n}(z) &:= \langle \usx  , \rdmmat{Y}^{T} z(z^2 - \matS^T\matS)^{-1} \rdmmat{Y}^{T} \usx  \rangle  \, , 
\end{align}
the limits of these quadratic forms are captured by Lemma~\ref{lem:concentrationquadraticform} 
\end{proof}
taking the limit $n \to \infty$, one gets immediately the desired result for Part-(i) of the Lemma. For Part-(ii), we first use the intermediate lemma: 
\begin{lem} If we denote by $\rdmmat{\Gamma}_n = (z^2 - \matS \matS^T)^{-1} $ and $\rdmmat{\check{\Gamma}}_n = (z^2 - \matS^T\matS)^{-1}$, we have
    \begin{align}
        \frac{z}{n} \Tr \rdmmat{X}\rdmmat{\Gamma}_n\rdmmat{X}^{T}  
        &=
        \frac{z}{n} \Tr \, \mat{G}_{\matSx^{1/2} \matSy  \matSx^{1/2}}(z^2)  \matSx 
        \\
        \frac{z}{n} \Tr \rdmmat{Y}\rdmmat{\check{\Gamma}}_n\rdmmat{Y}^{T}  &=
        \frac{z}{n} \Tr \, \mat{G}_{\matSy^{1/2} \matSx  \matSy^{1/2}}(z^2)  \matSy 
    \end{align}
with $\mat{G}_{\mat{A}}(z) := (z -\mat{A})^{-1}$ the resolvent and $\rdmmat{S}_x := \matX\matX^T$,  $\rdmmat{S}_y := \matY\matY^T$.   
\end{lem}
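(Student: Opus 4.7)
The plan is to reduce each trace to the claimed symmetrized form via two elementary linear-algebra maneuvers: (i) a push-through identity to move the outer factors of $\matX$ (resp.\ $\matY$) across the resolvent, and (ii) a similarity transform to replace the non-symmetric product $\matSx\matSy$ by its symmetric counterpart $\matSx^{1/2}\matSy\matSx^{1/2}$.

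Concretely, for the first identity I would first apply the push-through identity $\matA(z\mat{I} - \matB\matA)^{-1} = (z\mat{I} - \matA\matB)^{-1}\matA$ with $\matA = \matX$ and $\matB = \matX^T \matY\matY^T$. Since $\matB\matA = \matS\matS^T$ and $\matA\matB = \matSx\matSy$, this gives
\[
\matX\,\rdmmat{\Gamma}_n \,\matX^T \;=\; \matX(z^2 - \matS\matS^T)^{-1}\matX^T \;=\; (z^2 - \matSx\matSy)^{-1}\matSx.
\]
Next, under Assumption (A1) with $\alpha_x > 1$ the Wishart matrix $\matSx$ is a.s.\ invertible, hence $\matSx\matSy$ and the symmetric matrix $\matSx^{1/2}\matSy\matSx^{1/2}$ are related by the similarity $\matSx\matSy = \matSx^{1/2}\bigl(\matSx^{1/2}\matSy\matSx^{1/2}\bigr)\matSx^{-1/2}$. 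Passing to resolvents and right-multiplying by $\matSx$ yields
\[
(z^2 - \matSx\matSy)^{-1}\matSx \;=\; \matSx^{1/2}\,\mat{G}_{\matSx^{1/2}\matSy\matSx^{1/2}}(z^2)\,\matSx^{1/2}.
\]
Taking the trace, invoking the cyclic property, and multiplying by $z/n$ gives the first identity. The second identity follows from exactly the same two steps with the roles of $\matX$ and $\matY$ interchanged.

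The one point requiring a bit of care is the invertibility step in the symmetric regime $\alpha_y \le 1$ (or, by symmetry of the argument, $\alpha_x \le 1$), where the relevant Gram matrix may be singular. I would handle this either by regularizing $z^2 \mapsto z^2 + \varepsilon$ and letting $\varepsilon \searrow 0$, or by restricting the similarity to the range of the Gram matrix and appealing to Lemma~\ref{lem:ap:identityproductmat}, which guarantees that the non-zero spectra of $\matS\matS^T$, $\matS^T\matS$ and the symmetrized products coincide, so no spectral information is lost. Beyond this minor caveat, the statement is essentially a bookkeeping identity and the proof is short.
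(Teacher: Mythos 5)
Your proof is correct, and it is a genuine alternative to the paper's argument. The paper proceeds by expanding $\rdmmat{\Gamma}_n$ in a Neumann series for $|z|$ large, rearranging each term $(\matSx\matSy)^k\matSx = \matSx^{1/2}(\matSx^{1/2}\matSy\matSx^{1/2})^k\matSx^{1/2}$ via cyclicity, and resumming; your push-through identity packages this rearrangement in closed form and dispenses with the convergence caveat of the series, which is cleaner. Your only misstep is in how you handle the symmetrization: the similarity $\matSx\matSy=\matSx^{1/2}(\matSx^{1/2}\matSy\matSx^{1/2})\matSx^{-1/2}$ needlessly invokes $\matSx^{-1/2}$, and the regularization or range-restriction detour you then propose is unnecessary. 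Simply apply the push-through identity a second time with $\matA=\matSx^{1/2}$ and $\matB=\matSx^{1/2}\matSy$ to get
\begin{align*}
(z^2-\matSx\matSy)^{-1}\matSx=(z^2-\matSx\matSy)^{-1}\matSx^{1/2}\matSx^{1/2}=\matSx^{1/2}\big(z^2-\matSx^{1/2}\matSy\matSx^{1/2}\big)^{-1}\matSx^{1/2}\,,
\end{align*}
an identity valid for any positive semi-definite $\matSx$ without invertibility; taking the trace and using cyclicity then gives the claim in all aspect-ratio regimes at once. With that replacement your proof is shorter and more robust than the power-series version in the appendix (which, incidentally, contains a typo: the prefactor in the geometric expansion should be $z^{-2}$, not $z^2$).
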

\begin{proof} Taking $z$ sufficiently away, one has 
    \begin{align}
     \frac{z}{N} \Tr \rdmmat{X}\rdmmat{\Gamma}_n\rdmmat{X}^{T} &= 
    \frac{z}{N} \Tr \rdmmat{X} \left( z^2 \cdot \sum_{k=0}^{\infty} z^{-2k} \Big(\rdmmat{X}^T \rdmmat{Y} \rdmmat{Y}^T \rdmmat{X}\Big)^k  \right) \rdmmat{X}^{T} \, , \\
    &=
    \frac{z}{N} \Tr  \left( z^2 \cdot \sum_{k=0}^{\infty} z^{-2k} \Big( \matSx \matSy \Big)^k  \right) \matSx \, , \\
    &=
    \frac{z}{N} \Tr  \left( z^2 \cdot \sum_{k=0}^{\infty} z^{-2k} \matSx^{1/2} \Big( \matSx^{1/2} \matSy  \matSx^{1/2} \Big)^k  \right) \matSx^{1/2} \, , \\
     &=
    \frac{z}{N} \Tr  \left( z^2 \cdot \sum_{k=0}^{\infty} z^{-2k} \Big( \matSx^{1/2} \matSy  \matSx^{1/2} \Big)^k  \right) \matSx \, , \\
     &=
    \frac{z}{N} \Tr  \left( z^2 -  \matSx^{1/2} \matSy  \matSx^{1/2} \right)^{-1} \matSx \, , \\
    &= 
    \frac{z}{N} \Tr \, \mat{G}_{\matSx^{1/2} \matSy  \matSx^{1/2}}(z^2)  \matSx \, , 
\end{align}
and the proof of the other part follows identically.
\end{proof}
\vspace{0.5em}
Next, we use the following property, which follows from subordination relation in free probability:
\begin{lem} Let $t_{\boxtimes}$ be the $T$-transform of the measure defined in Lemma~\ref{lem:freeconvolution}, then we have
    \begin{align}
        \frac{1}{n} \mathbb{E}\Tr \, \mat{G}_{\matSx^{1/2} \matSy  \matSx^{1/2}}(z^2)  \matSx 
        &\to
        \frac{ t_{\boxtimes}(z^2)}{1+ \alpha_y t_{\boxtimes}(z^2)}
        \\
        \frac{1}{n} \mathbb{E} \Tr \, \mat{G}_{\matSy^{1/2} \matSx  \matSy^{1/2}}(z^2)  \matSy 
        &\to
       \frac{t_{\boxtimes}(z^2)}{1+ \alpha_x t_{\boxtimes}(z^2)}
    \end{align}
\end{lem}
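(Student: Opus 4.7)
The plan is to recognize the quantity $\frac{1}{n}\mathbb{E}\Tr G_{\matSx^{1/2}\matSy\matSx^{1/2}}(z^2)\,\matSx$ as a free-probabilistic object computable via multiplicative subordination. Since $\matX$ and $\matY$ are independent Gaussian matrices and both $\matSx$, $\matSy$ are Wishart (hence orthogonally invariant in law), the pair $(\matSx,\matSy)$ is asymptotically free by Voiculescu's theorem. Let $a,b$ denote the free limits of $\matSx$ and $\matSy$ in a non-commutative probability space $(\mathcal{A},\tau)$, so that $\mu_a = \mu_{\mathrm{MP}(\alpha_x)}$, $\mu_b = \mu_{\mathrm{MP}(\alpha_y)}$, and $M := a^{1/2}ba^{1/2}$ has distribution $\mu_\boxtimes$ with $T$-transform $t_\boxtimes$. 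It is therefore enough to establish the algebraic identity
\begin{align*}
\tau\!\left[(z-M)^{-1}a\right] \;=\; \frac{t_\boxtimes(z)}{1+\alpha_y\,t_\boxtimes(z)},
\end{align*}
and to substitute $z\mapsto z^2$; the companion identity follows by the same derivation with $(\alpha_x,\matSx)\leftrightarrow(\alpha_y,\matSy)$.

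I would first derive the scalar subordination function. Setting $\theta=t_\boxtimes(z)$, the multiplicativity $S_\boxtimes(\theta)=S_{\mu_a}(\theta)\,S_{\mu_b}(\theta)$ combined with the definition $t_\mu^{\langle -1 \rangle}(\theta)=(\theta+1)/(\theta S_\mu(\theta))$ yields $t_{\mu_a}\bigl(z\,S_{\mu_b}(\theta)\bigr)=\theta$. Plugging in the Marchenko--Pastur $S$-transform $S_{\mathrm{MP}(\alpha)}(\theta)=(1+\alpha\theta)^{-1}$ (already used in App.~\ref{sec:app:bulk}) identifies the explicit subordination function
\begin{align*}
\omega(z) \;:=\; \frac{z}{1+\alpha_y\,t_\boxtimes(z)},
\qquad t_{\mu_a}\bigl(\omega(z)\bigr)=t_\boxtimes(z).
\end{align*}
Next I would invoke the operator-valued subordination formula for the free multiplicative convolution (see \cite{mingo2017free}): the conditional expectation of $(z-M)^{-1}$ onto the von Neumann algebra generated by $a$ satisfies $\mathbb{E}_a[(z-M)^{-1}]=(\omega(z)/z)\,(\omega(z)-a)^{-1}$. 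Taking $\tau[\,\cdot\;a]$ on both sides and using the elementary identity $\tau[a(w-a)^{-1}]=t_{\mu_a}(w)$ yields
\begin{align*}
\tau\!\left[(z-M)^{-1}a\right] \;=\; \frac{\omega(z)}{z}\,t_{\mu_a}\bigl(\omega(z)\bigr) \;=\; \frac{t_\boxtimes(z)}{1+\alpha_y\,t_\boxtimes(z)},
\end{align*}
which is the desired algebraic identity. A quick sanity check by summing over $a$-moments recovers the known formula $\tau[(z-M)^{-1}]=(1+t_\boxtimes(z))/z$ from Prop.~\ref{prop:bulk}.

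The remaining step, and the main technical obstacle, is to transfer this free-probabilistic identity to the finite-$n$ averaged trace. For this I would combine Prop.~\ref{prop:bulk} with exponential concentration of the spectrum of $M_n$ around $\mathrm{Supp}(\mu_\boxtimes)$ to obtain, with overwhelming probability, a deterministic bound $\|G_{M_n}(z^2)\|\le 1/\mathrm{dist}(z^2,\mathrm{Spec}(M_n))$ on compact subsets of $\mathbb{C}\setminus\mathrm{Supp}(\mu_\boxtimes)$. This promotes the convergence of moments (supplied by asymptotic freeness) to convergence of $\frac{1}{n}\mathbb{E}\Tr G_{M_n}(z^2)\matSx$ through a standard Helffer--Sj\"ostrand / dominated convergence argument. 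Alternatively, the free probability machinery can be bypassed entirely: conditional on $\matSx$, the matrix $M_n=\matSx^{1/2}\matY\matY^T\matSx^{1/2}$ is a generalized Wishart with population covariance $\matSx$, and a Gaussian integration-by-parts derivation produces the Marchenko--Pastur--Silverstein fixed-point system from which the same explicit limit may be extracted. Either route yields the stated convergence, and the second identity for $\matSy^{1/2}\matSx\matSy^{1/2}$ is obtained verbatim by swapping the two channels.
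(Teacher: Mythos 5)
Your proposal is correct and follows essentially the same route as the paper, which simply cites the subordination literature (Belinschi and Chapter~19 of Potters--Bouchaud) without spelling out the computation. You fill in the details: deriving the subordination function $\omega(z)=z/(1+\alpha_y t_\boxtimes(z))$ from the $S$-transform identity, invoking Biane-type operator-valued subordination $\mathbb{E}_a[(z-M)^{-1}]=\tfrac{\omega(z)}{z}(\omega(z)-a)^{-1}$, and closing with $\tau[(\omega-a)^{-1}a]=t_{\mu_a}(\omega)$. The algebra checks out, including the consistency check against the formula for $\tau[(z-M)^{-1}]$ from Prop.~\ref{prop:bulk}. The transfer step from the free-probability identity to the finite-$n$ averaged trace is handled at the right level of rigor for the context (convergence of mixed moments from asymptotic freeness, plus analyticity outside the support); one could alternatively note that the quantity is analytic in $z^2$ and convergent as a power series for $|z^2|$ large, then conclude by analytic continuation. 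Your observation that the result can be obtained without free probability, by conditioning on $\matSx$ and running the Marchenko--Pastur--Silverstein fixed-point analysis for a generalized Wishart with population covariance $\matSx$, is a valid alternative route and arguably more self-contained; the paper does not pursue it.
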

\begin{proof}
    See for example \cite{belinschi2007new}. A similar derivation can be found in Chapter 19 of \cite{potters2020first}.
\end{proof}
\vspace{0.5em}
To conclude, one uses the limit $t = t_{\boxtimes} \alpha_x$ of Eq.~\eqref{eq:ttransf_freeproduct} to express everything in term of the $T$-transform of the squared singular values and obtains the desired result.

\section{Proof of the identity between overlaps and resolvent (Lemma ~\ref{lem:overlap_resolvent})}
\label{sec:app:proof_lem_overlap}
This follows simply by applying the result of Lemma~\ref{lem:overlap_from_resolvent} that relates the overlap of the symmetric matrix $\eta(\matSspk)$ as a proper limit of the quadratic form of the resolvent with the associated vector and then using Lemma~\ref{lem:decomposition-chiral} to relate the eigenvectors of $\eta(\matSspk)$ to the singular vectors of $\matSspk$. 

\section{Proof of Lemma~\ref{lem:concentration_diagonal4by4_spk}}
\label{sec:app:proof_lem_concentrations_spk}

We can decompose $\matKspkk$ of Eq.~\eqref{eq:def_matK} using Lemma~\ref{lem:woodbury-resolvent} as
\begin{align}
\label{eq:overlapmatrixdecomposition}
    \matKspkk &= \matKk +   \matKk \girko({\mat{A}})  \big( \mat{I} - \matKk \girko({\mat{A}}) \big)^{-1}   \matKk \, . 
\end{align}
The limit as $n\to \infty$ of the RHS of Eq.~\eqref{eq:overlapmatrixdecomposition} can thus be written as 
\begin{align}
\nonumber
    \matKk  +   \matKk  \girko({\mat{A}})  \big( \mat{I} -   \matKk  \girko({\mat{A}}) \big)^{-1}    \matKk 
    &=
    \begin{pmatrix}
        h_1 & 0 & 0 & 0 \\
        0 & h_2 & 0 & 0 \\
        0 & 0 & h_3 & 0\\
        0 & 0 & 0 & h_4
    \end{pmatrix} \\
    \nonumber
    &+ 
    \begin{pmatrix}
       0 & 0 & \sqrt{\lambda_{x,k} \lambda_{y,k}} \rho_k   h_1 & \sqrt{\lambda_{x,k}}   h_1 \\
        0 & 0 & \sqrt{\lambda_{y,k}}  h_2  & 0\\
       \sqrt{\lambda_{x,k} \lambda_{y,k}} \rho_k  h_3 & \sqrt{\lambda_{y,k}}  h_3 & 0 & 0\\
       \sqrt{\lambda_{x,k}} h_4 & 0 & 0 & 0
    \end{pmatrix}  
\\
\nonumber
& \quad
    \begin{pmatrix}
       1 & 0 & -\sqrt{\lambda_{x,k} \lambda_{y,k}} \rho_k \cdot  h_1 & -\sqrt{\lambda_{x,k}} \cdot  h_1 \\
        0 & 1 & -\sqrt{\lambda_{y,k}} \cdot h_2  & 0\\
       -\sqrt{\lambda_{x,k} \lambda_{y,k}} \rho_k \cdot h_3 & -\sqrt{\lambda_{y,k}} \cdot h_3 & 1 & 0\\
       -\sqrt{\lambda_{x,k}} h_4 & 0 & 0 & 1
    \end{pmatrix}^{-1}  \\
& \quad
\begin{pmatrix}
        h_1 & 0 & 0 & 0 \\
        0 &h_2 & 0 & 0 \\
        0 & 0 & h_3 & 0\\
        0 & 0 & 0 & h_4
    \end{pmatrix} + \rdmmat{E}_n \, ,
\end{align}
where $\| \rdmmat{E}_n \|_{\mathrm{op}} \to 0$ exponentially fast. Performing the matrix inversion yields
\begin{align}
    &\begin{pmatrix}
        1 & 0 & -\sqrt{\lambda_{x,k} \lambda_{y,k}} \rho_k  h_1 & -\sqrt{\lambda_{x,k}}   h_1 \\
        0 & 1 & -\sqrt{\lambda_{y,k}}  h_2  & 0\\
       -\sqrt{\lambda_{x,k} \lambda_{y,k}} \rho_k h_3 & -\sqrt{\lambda_{y,k}}  h_3 & 1 & 0\\
      - \sqrt{\lambda_{x,k}} h_4 & 0 & 0 & 1
    \end{pmatrix}^{-1}
    \\
    &\qquad \qquad=  \frac{1}{\it{\Delta}(z)} 
    \cdot 
     \begin{pmatrix}
       -1 + \sqrt{\lambda_{x,k}\lambda_{y,k}} h_2 h_3 
       & -  \sqrt{\lambda_{x,k}}\lambda_{y,k} \rho_k h_1 h_3
       &  - \sqrt{\lambda_{x,k}\lambda_{y,k}} \rho_k h_1
       &  
       \\
        0 & 1 & -\sqrt{\lambda_{y,k}} \cdot h_2  & 0\\
       -\sqrt{\lambda_{x,k} \lambda_{y,k}} \rho_k \cdot h_3 & -\sqrt{\lambda_{y,k}} \cdot h_3 & 1 & 0\\
       -\sqrt{\lambda_{x,k}} h_4 & 0 & 0 & 1
    \end{pmatrix} \, ,
\end{align}
with $\detit(z) = \lambda_{x,k} \lambda_{y,k} j_k(z)$. Doing the matrix multiplication gives after few algebraic manipulations, the equations:
\begin{align}
   \Tilde{H}_{11,n}(z)
    &=
    h_1(z) + \frac{h_1(z)^2 \lambda_{x,k} \left(h_4(z) - h_2(z) h_3(z) h_4(z) \lambda_{y,k} + h_3(z) \lambda_{y,k} \rho^2\right)}{\detit(z)} +\epsilon_{1,n} \, ,  \\
   \Tilde{H}_{22,n}(z)
    &= 
    h_2(z) +
     \frac{h_2(z)^2 h_3(z) \left(1 - h_1(z) h_4(z) \lambda_{x,k} \right) \lambda_{y,k}}{\detit(z)} +\epsilon_{2,n}  \\
     \Tilde{H}_{33,n}(z)
     &=
     h_3(z) +
     \frac{h_3(z)^2 \lambda_{y,k} \left(h_2(z) - h_1(z)h_2(z) h_4 \lambda_{x,k} + h_1(z) \lambda_{x,k} \rho^2\right) }{\detit(z)}+\epsilon_{3,n}  \, , \\
     \Tilde{H}_{44,n}(z)
     &= h_4(z) +
   \frac{ h_1(z) {h_4(z)}^2 \lambda_{x,k} \left(1 - h_2(z) h_3(z) \lambda_{y,k}\right)}{\detit(z)} +\epsilon_{4,n} \, ,
\end{align}
with $\epsilon_{i,n} \to 0$ exponentially fast. The first and third element simplify to
\begin{align}
    \Tilde{H}_{11,n}(z) 
    &=
    h_1(z) + \frac{ h_1(z) ( 1- \lambda_{y,k} h_2(z) h_3(z))}{\detit(z)}  +\epsilon_{1,n}\\
    \Tilde{H}_{33,n}(z)
     &=
     h_3(z) + \frac{ h_3(z) (1 - \lambda_{x,k} h_4(z) h_1(z)) }{\detit(z)} + \epsilon_{3,n} \, .
 \end{align}
Replacing the $h_k$  by their expression given in Lemma~\ref{lem:concentration_diagonal4by4} and simplifying, one may write the result as
\begin{align}
    \Tilde{H}_{11,n}(z) &\to  h_1(z) + \frac{f_{1,k}(z)}{j_k(z)} \, , \\
   \Tilde{H}_{33,n}(z) &\to   h_3(z) + \frac{f_{3,k}(z)}{j_k(z)} \, ,
\end{align}
with
\begin{align}
    f_{1,k}(z) &=  \frac{\alpha_x \lambda_{x,k} \lambda_{y,k} \rho_k^2 (1 + t) (\alpha_x + \alpha_y t)}{z} + \lambda_{x,k} t (\alpha_x -\lambda_{y,k}t) z \, , \\
    f_{3,k}(z) &=  \frac{\alpha_y \lambda_{y,k} \lambda_{x,k} \rho_k^2 (1 + t) (\alpha_y + \alpha_x t)}{z} + \lambda_{y,k} t (\alpha_y -\lambda_{x,k}t) z \, .
\end{align}

\section{Values of the Overlaps}
\label{sec:app:endofproof_overlap}
We first differentiate $j_k(z)$ with respect to $z$ which yields after simplification
\begin{align}
   \alpha_x^2 z (j_k)'(z) &= 
2 \left( \alpha_x - \lambda_{x,k} \, t(z^2) \right) \left( \alpha_x - \lambda_{y,k} \, t(z^2) \right) \\
 &\quad - 2 \left( \alpha_x (\alpha_x + \alpha_y) \lambda_{x,k} \lambda_{y,k} \rho^2 
+ \alpha_x (\lambda_{x,k} + \lambda_{y,k}) z^2 
+ 2 \lambda_{x,k} \lambda_{y,k} (\alpha_x \alpha_y \rho^2 - z^2) \, t(z^2) \right) 
t'(z^2)
\end{align}
Differentiating the relation $P(z, t(z)/\alpha_x)=0$, one finds that the derivatives of $t'(z)$ writes
\begin{align}
    t'(z) &= \frac{\alpha_x^2 t(z)}{\alpha_x^3 + \alpha_x^2 \alpha_y - \alpha_x^2 z + 3 \alpha_x t(z)^2 + 3 \alpha_y t(z)^2 + 
 6 \alpha_x \alpha_y t(z)^2} \, . 
\end{align}
Next we introduce 
\begin{align}
    d(z,t) &= 
\Big( 2 \left( \alpha_x - \lambda_{x,k} \, t \right) \left( \alpha_x - \lambda_{y,k} \, t \right) \\
&\quad + \left(
- \alpha_x \lambda_{x,k} \lambda_{y,k} \rho^2 \, t \, (\alpha_x + \alpha_y + 2 \alpha_y t)
+ t \left( -\alpha_x (\lambda_{x,k} + \lambda_{y,k}) + 2 \lambda_{x,k} \lambda_{y,k} t \right) z^2
\right) \Big) \\
& \qquad / \left( \alpha_x + \alpha_y + 3 (\alpha_x + \alpha_y + 2 \alpha_x \alpha_y) t^2 - z \right) z)
\end{align}
from which one deduces that 
\begin{align}
    \mathrm{m}^{\pm}_{x,k} = - 2 \frac{f_{1,k} \big( b(\mathrm{r}_k^\pm) \big)} { d(b(\mathrm{r}_k^\pm),\mathrm{r}_k^\pm) } \, , \\
     \mathrm{m}^{\pm}_{y,k} = - 2 \frac{f_{3,k} \big( b(\mathrm{r}_k^\pm) \big)} { d(b(\mathrm{r}_k^\pm),\mathrm{r}_k^\pm) } \, . 
\end{align}

\section{Optimal Angle of Rotations}
\label{sec:app:angle_rotation}
Since both left singular vector $\rdmvect{\Tilde{u}}_+$ and $\rdmvect{\Tilde{u}}_-$ may correlate with the signal components $\vsxk$, one may as well construct a class of unit-norm estimator by performing a rotation in the plane of this two orthogonal vectors, the latter can be parametrized by $\beta \in (0,1)$ in the following way
\begin{align}
    \rdmvect{\hat{w}}_x(\beta) := \beta \rdmvect{\Tilde{u}}_- + \sqrt{1 - \beta^2}  \rdmvect{\Tilde{u}}_+ \, ,
\end{align}
and whose (squared) overlap with $\vsxk$ is given asymptotically by 
\begin{align}
    \< \vsxk, \rdmvect{\hat{w}}_x(\beta) \>^2 \xrightarrow[n \to \infty]{\as} \mathrm{q}_k(\beta) = \beta  \, \mathrm{m}^-_{x,k} + \sqrt{1 -\beta^2}  \, \mathrm{m}^+_{x,k} \, .
\end{align}
and so to optimize with respect to $\beta$ one simplify solves $\mathrm{q}_k'(\beta) = 0$ from which one finds that the optimal angle of rotation is given by 
\begin{align}
    \beta^{\mathrm{opt}} = \frac{\mathrm{m}^-_{x,k} }{\sqrt{ (\mathrm{m}^-_{x,k})^2 + (\mathrm{m}^+_{x,k})^2 }}
    \, . 
\end{align}

\end{document}